\newtheorem{theorem}{Theorem}[section]
\newtheorem{lemma}{Lemma}[section]
\newtheorem{corollary}{Corollary}[section]
\newtheorem{proposition}{Proposition}[section]
\theoremstyle{definition}
\title{Pontryagin Duality for Modules over Compact Discrete Valuation Rings}
\author{by Milo Moses}
\begin{document}

\maketitle

\begin{abstract}
We establish an analogue of Pontryagin duality for modules over compact discrete valuation rings $R$. Namely, we define the dual of a topological $R$ module to be its continuous $R$-module homomorphisms into $K/R$, the quotient module of the fraction field by its ring of integers. It is established that for locally compact $R$-modules the double dual map is an isomorphism and homeomorphism. Additionally, given a non-topological $R$-module a canonical topology is constructed, uniquely defined so that the double dual map will be injective and continuous. Finally, the functor assigning a module to itself equipped with canonical topology is shown to be fully faithful, allowing one to recontextualize the topological statements in purely algebraic forms.
\end{abstract}
\tableofcontents

\newcommand{\RR}{\mathbb{R}}
\newcommand{\HH}{\mathbb{H}}
\newcommand{\NN}{\mathbb{N}}
\newcommand{\QQ}{\mathbb{Q}}
\newcommand{\CC}{\mathbb{C}}
\newcommand{\FF}{\mathbb{F}}
\newcommand{\ZZ}{\mathbb{Z}}
\newcommand{\TT}{\mathcal{T}}
\newcommand{\mm}{\mathfrak{m}}
\newcommand{\pp}{\mathfrak{p}}
\newcommand{\Hom}{\mathrm{Hom}}
\newcommand{\Frac}{\mathrm{Frac}}
\newcommand{\res}{\mathrm{res}}
\newcommand{\HM}{\mathrm{HM}}
\newcommand{\Char}{\mathrm{char}}
\newcommand{\LCM}{\mathrm{LCM}}
\newcommand{\Gal}{\mathrm{Gal}}
\newcommand{\Kbar}{\overline{K}}
\newcommand{\st}{\,\,\mathrm{s.t}\,\,}
\newcommand{\FFqx}{\FF_q\llbracket x \rrbracket}

\pagebreak

\section{Introduction}

The modern field of Harmonic Analysis has its roots in the 1930s, with Lev Pontryagin's generalization of the Fourier transform to the more general setting of locally compact abelian groups. Namely, he showed certain Haar integrals relate the space of functions on a locally compact abelian group $M$ to the space of functions on a dual group $M^{\wedge}$, defined to be the space of continuous group morphisms into the circle group $\RR/\ZZ$.

In the 1950s and 60s, the burgeoning field of $p$-adic analysis was gaining steam with the proof of Mahler's theorem on Newton series \cite{mahler1958interpolation}, and Kubota-Leopoldt's introduction of $p$-adic L-functions \cite{kubota1964p}. This led to the very natural question of introducing a notion of Fourier transformation in the $p$-adic setting. The main problem was that $\ZZ_p$ does not admit a $\QQ_p$-valued Haar measure. This was fixed in the 70s, when C. F. Woodcock introduced a best-possible $p$-adic valued invariant integral on $\ZZ_p$ \cite{woodcock1974invariant}.

Later that year Woodcock introduced a $p$-adic Fourier theory on $\ZZ_p$ \cite{woodcock1974fourier}. This was extended to a $p$-adic Fourier theory on $\QQ_p$ in G Borm's thesis \cite{borm1988p}. Early works of Amice \cite{amice1964interpolation} and modern generalizations such as \cite{schneider2001p} also treat the case of $\QQ_p$ in much detail. Nobody has achieved Woodcock's original dream, however, that these results ``might be a special case of a `Harmonic Analysis' for the category of locally compact $\ZZ_p$-modules".

In this paper we reinterpret classical Pontryagin duality in a strongly $p$-adic fashion, which allows us to prove strong results about the category of locally compact $\ZZ_p$-modules. These are potentially useful not only for the development of a $p$-adic Harmonic Analysis but also in other fields where $\ZZ_p$-modules are present. For example, the Iwasawa theory of Elliptic curves is an extremely important field in modern mathematics. The $p$-adic L functions of Elliptic curves are expressed as characteristic ideals of the duals of Selmer groups. This paper allows one to understand duals of Selmer groups more properly. In particular, one will know that the double dual will be an isomorphism even when the Selmer group is not cofinitely generated.

This paper performs all of these computations in the more general setting of modules over a compact Discrete Valuation Ring (DVR). Compact DVRs include not only rings of integers of finite extensions of $\QQ_p$, but also completed polynomial rings over finite fields $\FFqx$. It has long been understood that modules over these rings had the potential for better duality theories than most.

The first results in this direction came with Kaplansky's 1952 treatment of discrete modules over compact DVRs \cite{kaplansky1953dual}. He proved that discrete modules are dual to compact modules, and that applying duality twice gives the identity. More work was done in this area in \cite{fuchs1978duality}, and the paper \cite{brumer1966pseudocompact} gave related results about discrete modules over some pseudocompact algebras. No paper, however, is able to deal with general modules over compact DVRs.

It is important to note that these is a divide in the literature between papers that define the dual of an $R$-module to be its continuous $\ZZ$-module homomorphisms into $\RR/\ZZ$, and those which define the dual of a module to be its continuous $R$-module homomorphisms into $K/R$, where $K$ is the field of fractions of $R$. This paper is the first to remark that these two spaces are functorially isomorphic when $R$ is compact DVR.

\section{Statement of Results}

The primary objects of interest in this paper are Hausdorff topological modules over compact Discrete Valuation Rings (DVRs). A DVR is a local ring whose maximal ideal is principal. Such rings have a natural metric topology induced by their valuation, with a basis of neighborhoods of $0$ given by powers of the maximal ideal. By compact DVR we refer to those rings which are compact spaces when considered with this topology. It is a standard result that a DVR is compact if and only if it is complete and has finite residue field: see e.g., \cite{serre2013local} Chapter 2 § 1 Proposition 1 for a reference.

A topological module over a topological ring $A$ refers to an $A$-module $M$ endowed with a Hausdorff topology such that the addition map $M\times M\xrightarrow{} M$, the inversion map $M\xrightarrow{}M$, and the scalar multiplication map $A\times M\xrightarrow{} M$ are all continuous. We note that in general giving $M$ the discrete topology will not necessarily produce a topological module, as the map $A\times M\xrightarrow{} M$ need not be continuous.

Locally compact topological modules are of particular interest due to the storied history of locally compact topological abelian groups, i.e., $\ZZ$-modules. They were first studied in \cite{pontrjagin1934theory}, where the following famous duality theorem was proved:

\begin{theorem}\label{Pontryagin Duality Theorem} Let $M$ be a locally compact abelian group. Define $\widehat{M}$ to be the space of continuous group homomorphisms $M\xrightarrow{} \RR/\ZZ$ endowed with the compact-open topology. Then, $\widehat{M}$ is locally compact and the natural map

\begin{align*}
M&\xrightarrow{} \widehat{\widehat{M}}\\
m&\mapsto \left(\varphi \mapsto \varphi(m)\right)
\end{align*}

is an isomorphism of groups and a homeomorphism of topological spaces.
\end{theorem}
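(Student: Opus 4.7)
The plan is to reduce to a small set of building blocks and then bootstrap via dévissage. First I would verify that $\widehat{M}$ is itself a locally compact Hausdorff abelian group: Hausdorffness follows because characters separate points (via the compact-open topology on a Hausdorff target); local compactness is the content of the Ascoli-type argument that a family of characters equicontinuous at $0$ and uniformly bounded is relatively compact. I would then check that the evaluation map $\mathrm{ev}\colon M\to\widehat{\widehat{M}}$ is a well-defined continuous group homomorphism by the standard exponential-law manipulations of the compact-open topology.

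Next I would establish duality by hand on the canonical building blocks: $\widehat{\RR}\cong\RR$ via $y\mapsto(x\mapsto e^{2\pi ixy})$, $\widehat{\ZZ}\cong\RR/\ZZ$, $\widehat{\RR/\ZZ}\cong\ZZ$, and $\widehat{\ZZ/n\ZZ}\cong\ZZ/n\ZZ$, confirming in each case that $\mathrm{ev}$ is the identification. These cases together with the fact that the dual functor commutes with finite products handles all groups of the form $\RR^n\times\ZZ^m\times F$ with $F$ finite.

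The central input is the structure theorem for locally compact abelian groups (due to Pontryagin--van Kampen): every such $M$ contains an open subgroup of the form $\RR^n\times K$ with $K$ compact, and every compactly generated LCA group has the shape $\RR^n\times\ZZ^m\times K$. Combined with the fact that any compact abelian group is the inverse limit of its quotients by open subgroups (all of the form Lie, hence of the type above), and that any discrete abelian group is the direct limit of its finitely generated subgroups, the problem reduces to showing that $\mathrm{ev}$ behaves well under short exact sequences, direct limits, and inverse limits. For this I would prove that the dual functor is exact on the category of LCA groups: given a closed inclusion $H\hookrightarrow M$, every character on $H$ extends to one on $M$ (this is the abelian analogue of Hahn--Banach, provable by Zorn's lemma together with the explicit duality for $\widehat{\ZZ}$ and $\widehat{\RR}$), and dualization converts direct limits to inverse limits and vice versa.

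The hard step will be this exactness of the dual functor, specifically the character extension property and the verification that the topological quotient $\widehat{M/H}\cong H^{\perp}$ is the correct one (i.e., that the quotient topology on characters agrees with the compact-open topology on the annihilator). Once exactness is in hand, I would finish by a five-lemma argument: for each $M$, fit it into an exact sequence whose outer terms already satisfy duality by the base case, and then conclude the same for $M$. The homeomorphism statement comes along for the ride because continuity of $\mathrm{ev}$ is formal and the inverse's continuity follows from the fact that $\mathrm{ev}$ is open on each step of the dévissage, which reduces to the building-block computation.
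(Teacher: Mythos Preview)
The paper does not prove this theorem at all: Theorem~1.1 is the classical Pontryagin--van Kampen duality, stated with a citation to Pontryagin's 1934 paper and used thereafter as a black box (most notably in the proof of Theorem~1.2, where the author writes ``By Pontryagin duality (Theorem~\ref{Pontryagin Duality Theorem})\ldots''). So there is no ``paper's own proof'' to compare against; your proposal is a sketch of one of the standard textbook arguments, and in that sense it is not wrong but simply goes far beyond what the paper attempts.

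A few remarks on the sketch itself. First, your justification of Hausdorffness of $\widehat{M}$ is misstated: separation of points by characters is what gives injectivity of $\mathrm{ev}$, not Hausdorffness of the dual; the latter follows directly from the compact-open topology with Hausdorff target. Second, be careful with the logical dependencies: in some expositions the structure theorem you invoke (every LCA group has an open subgroup $\cong\RR^n\times K$ with $K$ compact) is derived \emph{after} duality, so you would need to cite a treatment (e.g.\ Hewitt--Ross) where it is established independently via no-small-subgroups arguments. Third, the clause ``the homeomorphism statement comes along for the ride'' is the place where real work hides: openness of $\mathrm{ev}$ does not follow formally from the five-lemma in the topological category, and the usual proofs handle it by showing $\mathrm{ev}$ is a bijection between locally compact groups that is continuous and takes a compact neighborhood onto a neighborhood, then invoking the open mapping principle for $\sigma$-compact LCA groups. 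None of these are fatal gaps, but they are exactly the places where a full proof has to do something nontrivial.
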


One of the main purposes of this article is to prove an analogue of Theorem \ref{Pontryagin Duality Theorem} in the case of compact DVRs. To do this, we fix the following notation: $R$ is a compact DVR, with a distinguished uniformizing element $\pi$. We fix $K$ to be the fraction field of $R$. The absolute value on $R$ extends to $K$, endowing $K$ with the structure of a (complete, compact) topological field. We fix $T$ to be the $R$-module quotient $K/R$ endowed with quotient topology, a local analogue of $\RR/\ZZ$.

\begin{theorem}\label{Local Duality Theorem} Let $M$ be a locally compact $R$-module. Define $\widehat{M}$ to be the space of continuous $R$-module homomorphisms $M\xrightarrow{} T$ endowed with the compact-open topology. Then, $\widehat{M}$ is locally compact and the natural map

\begin{align*}
M&\xrightarrow{} \widehat{\widehat{M}}\\
m&\mapsto \left(\varphi \mapsto \varphi(m)\right)
\end{align*}

is an isomorphism of $R$-modules and a homeomorphism of topological spaces.
\end{theorem}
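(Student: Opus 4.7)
The plan is to adapt Pontryagin's classical strategy: verify duality on elementary building blocks, then reduce the general case to those via a structure theorem producing a compact open $R$-submodule. First I would establish the formal properties---that $\alpha_M : M \to \widehat{\widehat{M}}$ is always a continuous $R$-module homomorphism (a direct check from the definition of the compact-open topology) and that $\widehat{M}$ is itself a locally compact topological $R$-module (an Ascoli-type equicontinuity argument, using that $\widehat{M}$ sits inside $T^M$ and that equicontinuous subsets over a compact neighborhood of $0$ are relatively compact). Injectivity of $\alpha_M$ would follow from the existence of enough characters, which reduces to the divisibility, and hence abstract injectivity, of $T$ as an $R$-module.

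Next, I would compute the dual on elementary modules. Since $T$ is discrete and $R$ is compact, any continuous $R \to T$ factors through some $R/\pi^n$, and $\Hom_R(R/\pi^n, T) = T[\pi^n] \cong R/\pi^n$; passing to the colimit yields $\widehat{R} \cong T$ as a discrete module. Dually, writing $T = \varinjlim \pi^{-n} R/R$ and using that $\widehat{\cdot}$ exchanges colimits of discrete modules with limits of compact modules gives $\widehat{T} \cong \varprojlim R/\pi^n R = R$. For a finite $R$-module $F$, self-duality follows from finite abelian group duality applied to $T[\pi^N] \cong R/\pi^N$ for $N$ large. Expressing any compact $R$-module as an inverse limit of finite quotients and any discrete $R$-module as a colimit of finite submodules then extends duality to the full subcategories of compact and discrete $R$-modules, with $\widehat{\cdot}$ exchanging the two.

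The heart of the proof is the structure theorem: every locally compact $R$-module $M$ contains a compact open $R$-submodule $M_0$. Because $R$ is totally disconnected, $M$ is totally disconnected, so by van Dantzig $M$ has a compact open subgroup $H$; the set $R \cdot H$ is then a compact, $R$-stable neighborhood of $0$ (being the image of the compact space $R \times H$). A saturation argument produces a compact open $R$-submodule $M_0 \supseteq H$, with quotient $M/M_0$ automatically discrete. Applying the dual functor---exact on this sequence thanks to the divisibility of $T$ and an extension-by-zero argument on the open subgroup $M_0$---yields $0 \to \widehat{M/M_0} \to \widehat{M} \to \widehat{M_0} \to 0$. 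A five-lemma comparison between the original sequence and its double dual, using that $\alpha_{M_0}$ and $\alpha_{M/M_0}$ are isomorphisms by the preceding stage, shows that $\alpha_M$ is a bijective continuous $R$-homomorphism.

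The main obstacle, and the step requiring the most care, is upgrading $\alpha_M$ from a continuous bijection to a homeomorphism, since open-mapping theorems are unavailable in this non-metrizable setting. The argument would proceed by showing that $\alpha_M$ restricts to $\alpha_{M_0}$ on the compact open submodule $M_0$; as a continuous bijection between compact Hausdorff spaces, $\alpha_{M_0}$ is automatically a homeomorphism, and its image $\widehat{\widehat{M_0}}$ is compact open inside $\widehat{\widehat{M}}$ because the dualization process carries compact open submodules to compact open submodules. Hence $\alpha_M$ is open at $0$, and translation invariance promotes this to a global topological isomorphism. Constructing the $R$-stable compact open submodule $M_0$, and bookkeeping the compact-open topology through two consecutive dualizations, constitute the most delicate technical parts of the argument.
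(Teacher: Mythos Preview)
Your approach is sound in outline but takes a genuinely different route from the paper. You rebuild the duality from scratch inside $\LCM_R$: compute $\alpha$ on the elementary modules $R$, $T$, and $R/\pi^n$, extract a compact open $R$-submodule $M_0 \subseteq M$ via van Dantzig, and bootstrap with a five-lemma argument on $0 \to M_0 \to M \to M/M_0 \to 0$. The paper instead \emph{reduces} to classical Pontryagin duality: Flood's adjunction (Proposition~\ref{Flood thing}) gives a natural $\HM_R$-isomorphism $[M,\TT_R]_R \cong [M,\RR/\ZZ]_\ZZ$, and once one has the non-canonical isomorphism $\TT_R \cong T$ of Theorem~\ref{Flood isomorphism theorem}, the double-dual map for $T$ is identified with the double-dual map for $\RR/\ZZ$ and Theorem~\ref{Pontryagin Duality Theorem} finishes the job in one stroke. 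Your route is self-contained and sidesteps the somewhat delicate, case-by-case construction of $\TT_R \cong T$ (which the paper carries out separately for $\FF_q\llbracket x\rrbracket$ and for $\mathcal{O}_K$ via the Cohen structure theorem); the paper's route is much shorter but imports the full classical theorem as a black box. One step in your sketch is under-justified: ``$R$ totally disconnected $\Rightarrow$ $M$ totally disconnected'' is true but not for the stated reason---one must argue that the identity component $M^{0}$ is a closed $R$-submodule which, being connected locally compact abelian, is divisible, and then combine $\pi M^{0}=M^{0}$ (from $p=\pi^{e}u$ in mixed characteristic, or $p=0$ in equal characteristic) with the uniform convergence $\pi^{n}M^{0}\to 0$ on compacta to force $M^{0}=0$. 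Likewise the ``saturation argument'' is cleanest via the tube lemma: compactness of $R$ yields an open $W\ni 0$ with $R\cdot W\subseteq H$, and the closure of the $R$-span of $W$ is the required compact open $R$-submodule.
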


Other results obtained about locally compact $R$-modules reveal a distinctly different structure to that of abelian groups. In particular, we have the following:

\begin{theorem}\label{Topology Forcing} Modules over compact DVRs have at most one topology that make them locally compact topological modules.
\end{theorem}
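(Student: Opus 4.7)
The plan is to show that if $\tau_1, \tau_2$ are two topologies on $M$ each making it a locally compact topological $R$-module, then $\tau_1 = \tau_2$, by producing a common compact open $R$-submodule on which both topologies are forced to agree, and then invoking the fact that the topology of a topological group is determined by its neighborhood filter at $0$.

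The first ingredient will be a rigidity statement for cyclic submodules. For any $m \in M$, the scalar multiplication map $R \to M$, $r \mapsto rm$, is continuous by the topological module axioms and factors as a continuous bijection $R/\mathrm{Ann}(m) \to Rm$. Since $R/\mathrm{Ann}(m)$ is a quotient of the compact ring $R$ and therefore compact, and $Rm$ inherits a Hausdorff topology from $M$, this bijection is automatically a homeomorphism. Applying the same reasoning to the maps $R^n \to M$ shows that every finitely generated $R$-submodule $N \subseteq M$ is compact, with a subspace topology canonically determined by its algebraic structure, so $\tau_1|_N = \tau_2|_N$ for all such $N$.

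Next, I will show that each $(M, \tau_i)$ admits a compact open $R$-submodule. The key tool is a tube lemma: applied to scalar multiplication $R \times M \to M$ at a compact $R$-stable neighborhood of $0$ (obtainable by replacing any compact neighborhood $K$ by $R \cdot K$), it yields that multiplication by $\pi^n$ contracts such a neighborhood uniformly to $0$. Combined with the observation that any compact connected torsion abelian group is trivial by the classical Pontryagin duality (Theorem \ref{Pontryagin Duality Theorem}), applied to the quotient $C/\pi C$ where $C$ is the connected component of $0$ in the $R$-stable compact neighborhood, this forces $C = \bigcap_n \pi^n C = \{0\}$, so $M$ is totally disconnected. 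Van Dantzig's theorem then produces a compact open subgroup $H$, refined by another tube-lemma argument, applied to the openness of $\bigcap_{r \in R} r^{-1} H$, into a compact open $R$-submodule $U_i \subseteq (M, \tau_i)$.

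On $U_1 \cap U_2$, which is open in both topologies, each $\tau_i$ induces a compact topological $R$-module structure; by the same total-disconnectedness argument, each such structure is profinite, and I will argue its topology coincides with the inverse-limit topology induced by the canonical topologies on its finite quotients (controlled by the cyclic rigidity of the first step). Hence $\tau_1|_{U_1 \cap U_2} = \tau_2|_{U_1 \cap U_2}$, giving $\tau_1 = \tau_2$. The main obstacle will be carefully establishing total disconnectedness — extending the tube-lemma contraction from a compact neighborhood to the connected component of $0$ in all of $M$ — and pinning down the intrinsic characterization of open submodules in a compact topological $R$-module so that the uniqueness on $U_1 \cap U_2$ is genuinely algebraic.
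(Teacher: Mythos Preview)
Your approach through direct structure theory of locally compact groups is quite different from the paper's route via duality (the paper shows every locally compact topology has injective continuous double-dual map, then argues this forces the canonical topology). However, your step~3 contains a gap that cannot be closed: you want a compact totally disconnected $R$-module to have its topology determined as ``the inverse-limit topology induced by the canonical topologies on its finite quotients,'' but which submodules are open---and hence which finite quotients appear in that inverse system---depends on the topology, not on the algebra alone. Your step~1 pins down the topology on finitely generated \emph{submodules}; this says nothing about which quotients are admissible. (There is also a smaller unaddressed point: nothing guarantees that $U_1\cap U_2$ is open in \emph{both} topologies, since $U_i$ is only known to be open for $\tau_i$.)

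In fact the statement as written is false. Take $R=\ZZ_p$ and $V=\prod_{n\geq 1}\FF_p$, an $R$-module via $R\twoheadrightarrow\FF_p$. The product topology $\tau_1$ makes $V$ a compact topological $R$-module. Choose a discontinuous $\FF_p$-linear automorphism $\sigma$ of $V$---for instance $\sigma(v)=v+\phi(v)\,w$ with $\phi$ a discontinuous linear functional and $0\neq w\in\ker\phi$---and set $\tau_2=\sigma_*\tau_1$. Since $\sigma$ is $R$-linear, $(V,\tau_2)$ is again a compact topological $R$-module on the same underlying module, yet $\tau_1\neq\tau_2$. In your outline this is exactly the failure of uniqueness on $U_1\cap U_2=V$: the two profinite structures have different systems of open submodules. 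In the paper's argument the corresponding error lies in the proof of Lemma~\ref{Lemma5}, where a cofiltered limit and a filtered colimit are interchanged without justification; indeed that lemma already fails for this same $V$ equipped with its canonical (discrete) topology, whose dual is $V^*$ with the compact weak topology rather than the discrete one.
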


This statement fails immediately for abelian groups, where any non-discrete locally compact module could also be given the discrete topology. An example of this theorem in action is as follows: Let $K/\QQ_p$ be a finite extension, and $R=\mathcal{O}_K$ be its ring of integers. Then $R$ is a compact DVR, and as such has a natural metric space topology. Additionally, the inclusion $\ZZ_p\hookrightarrow R$ endows $R$ with the structure of a topological $\ZZ_p$-module. Every finitely generated torsion-free module over a DVR is free, and hence we have a $\ZZ_p$-module isomorphism $R\cong \ZZ_p^e$ for some $e\geq 1$. The question is whether or not the product topology on $r$ copies of $\ZZ_p$ and the metric space topology on $R$ agree. Seeing as they are both locally compact, Theorem \ref{Topology Forcing} immediately tells us they are the same. This could also be derived through a longer low-tech direct computation. This method of decomposing into components and applying the product topology is very powerful for all finitely generated modules. In particular, we have the following:

\begin{theorem}\label{fin gen uniqueness} For each finitely generated $R$-module $M$, there is a unique topology on $M$ turning it into a topological module. If $M\cong R$ this topology is obtained endowing $R$ with its metric space topology. If $M\cong R/\mm^n$ for some $n\geq 1$, where $\mm$ is the maximal ideal of $R$, then this topology is discrete. If $M\times N$ is the product of non-trivial modules, then this topology is equal to the product topology on $M$ and $N$.
\end{theorem}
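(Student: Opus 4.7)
My plan is to first handle the cyclic building blocks $R$ and $R/\mm^n$, and then bootstrap to an arbitrary finitely generated $M$ via the structure theorem for modules over a PID (noting that $R$, being a DVR, is a PID).

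For $M \cong R/\mm^n$, compactness of $R$ forces the residue field $R/\mm$ to be finite, so $M$ itself is a finite set; the only Hausdorff topology on a finite set is the discrete one, so any topological module topology on $M$ must be discrete. For $M \cong R$, let $\tau$ be any topological $R$-module topology on $R$ and let $\tau_{\mathrm{met}}$ denote its metric topology. Continuity of the scalar action $(R, \tau_{\mathrm{met}}) \times (R, \tau) \to (R, \tau)$ at $(0, 1)$ shows that any $\tau$-neighborhood $U$ of $0$ contains a product $V \cdot W$ with $V$ a $\tau_{\mathrm{met}}$-neighborhood of $0$ and $W$ a $\tau$-neighborhood of $1$; choosing $n$ with $\mm^n \subseteq V$ yields $\mm^n = \mm^n \cdot 1 \subseteq V \cdot W \subseteq U$, so the neighborhood filter of $0$ in $\tau_{\mathrm{met}}$ refines that of $\tau$. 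The identity $(R, \tau_{\mathrm{met}}) \to (R, \tau)$ is thus a continuous bijection from a compact space to a Hausdorff space, hence a homeomorphism, and $\tau = \tau_{\mathrm{met}}$.

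The induction step shows that any topological module topology $\tau$ on a direct sum $M = M_1 \oplus M_2$ of compact, uniquely-topologized modules $M_i$ coincides with the product topology. Indeed, the subspace topology on each $M_i \subseteq (M, \tau)$ endows $M_i$ with a Hausdorff topological $R$-module structure, and therefore coincides with the canonical topology on $M_i$ by uniqueness. The inclusions $(M_i, \mathrm{canonical}) \hookrightarrow (M, \tau)$ are then continuous, so their sum is a continuous bijection $M_1 \times M_2 \to (M, \tau)$ from a compact space to a Hausdorff space, which is automatically a homeomorphism. Combined with the structure-theorem decomposition $M \cong R^a \oplus R/\mm^{n_1} \oplus \cdots \oplus R/\mm^{n_k}$ and the fact that each summand is compact in its canonical topology, iterating this step gives uniqueness together with the product-of-canonical-topologies description for every finitely generated $M$.

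The main obstacle is the $M \cong R$ base case: continuity of scalar multiplication on its own yields only one direction of refinement between $\tau$ and $\tau_{\mathrm{met}}$, and reversing it needs genuine input. The key move is the compact-to-Hausdorff bijection argument, which is precisely where the standing hypothesis that $R$ is a \emph{compact} DVR becomes indispensable; once this case is in hand, everything else reduces to manipulations valid for arbitrary compact topological modules.
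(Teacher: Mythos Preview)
Your proof is correct, and in the inductive step it is genuinely more elementary than the paper's. Both arguments agree on the base cases: $R/\mm^n$ is finite Hausdorff hence discrete, and for $R$ one first shows via continuity of scalar multiplication that the metric topology refines any topological-module topology $\tau$ (the paper phrases this as the continuity of $r\mapsto r\cdot 1$). For the reverse comparison on $R$, the paper runs a sequential compactness argument, whereas you simply invoke that a continuous bijection from a compact space to a Hausdorff space is a homeomorphism; your route is shorter and sidesteps any worry about whether $\tau$ is sequential. The real divergence is in the product step. The paper shows that each factor is complete, hence closed, passes to Hausdorff quotients to match subspace and quotient topologies, and then appeals to the short split $5$-lemma in the category of topological modules to identify $X$ with $N\times M$. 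You instead observe that the subspace topology on each summand is canonical by the inductive hypothesis, so the addition map from the (compact) product of the summands to $(M,\tau)$ is a continuous bijection onto a Hausdorff space, hence a homeomorphism. This bypasses the quotient discussion and the categorical $5$-lemma entirely; the price is only that one must keep track of compactness of the summands through the induction, which you do. In short: the paper's argument is more structural (split exact sequences, $5$-lemma), yours is more direct and exploits compactness of $R$ a second time in the inductive step, which the paper does not.
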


For more general modules, one cannot eliminate the possibility of there being multiple topological module structures on a given algebraic base. However, a canonical choice still distinguishes itself:

\begin{theorem}\label{Equivalences theorem} Given an $R$-module $M$, the following topologies are all well defined and equivalent:

\begin{enumerate}
\item The direct limit topology arising from the equality $M=\varinjlim_{S\leq M} S$, where $S$ runs over finitely generated submodules of $M$ , each given the unique topological module structure of Theorem \ref{fin gen uniqueness}.
\item The unique finest topological module structure on $M$, i.e., the unique structure such that every set open in any topological module structure on $M$ is open in this one.
\item The unique topological module structure on $M$ for which the double dual map $M\xrightarrow{} M^{\wedge\wedge}$ is injective and continuous.
\end{enumerate}

We refer to this topology as the canonical topology on $M$, or refer to $M$ as canonical.
\end{theorem}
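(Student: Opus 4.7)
Denote the topologies in (1), (2), (3) by $\tau_1, \tau_2, \tau_3$. The strategy is to prove $\tau_1 = \tau_2 = \tau_3$, establishing well-definedness in the process.

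For $\tau_1 = \tau_2$: given any topological module structure $\tau$ on $M$ and any f.g.\ $S \leq M$, the subspace topology $\tau|_S$ is itself a topological module structure on $S$ and hence, by Theorem \ref{fin gen uniqueness}, equals the unique canonical topology on $S$. Consequently each inclusion $S \hookrightarrow (M, \tau)$ is continuous with $S$ in its canonical topology, so by the universal property of the direct limit $\tau \subseteq \tau_1$. To conclude that $\tau_1$ is itself a topological module structure---hence the finest such, equal to $\tau_2$---it remains to verify continuity of $+$, $-$, and scalar multiplication under $\tau_1$. Since each f.g.\ submodule is compact Hausdorff in its canonical topology (because $R$ is compact, so $R^a$ is compact and $R/\mm^n$ is finite discrete) and the operations restrict continuously to each pair of f.g.\ submodules, continuity on $M$ follows from standard facts about direct limits of compact Hausdorff spaces and their behavior under finite products.

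For $\tau_1 \Rightarrow \tau_3$ (existence): under $\tau_1$, the double dual $\Psi : M \to M^{\wedge\wedge}$ is injective, because given $0 \neq m \in M$ lying in some f.g.\ $S$, Theorem \ref{Local Duality Theorem} produces a continuous $\psi : S \to T$ with $\psi(m) \neq 0$; since $T = K/R$ is divisible, hence injective as an $R$-module (as $R$ is a DVR), $\psi$ extends to an $R$-linear $\varphi : M \to T$, and $\varphi$ is $\tau_1$-continuous because any $R$-linear map from a f.g.\ module into $T$ is automatically continuous (directly checked on summands $R$ and $R/\mm^n$, using that every element of $T$ is torsion). Continuity of $\Psi$ is then verified on each f.g.\ $S$ via the factorization $\Psi|_S = r^* \circ \Psi_S$, where $\Psi_S : S \to S^{\wedge\wedge}$ is continuous by Theorem \ref{Local Duality Theorem} and $r^* : S^{\wedge\wedge} \to M^{\wedge\wedge}$ is the pullback along the continuous restriction $r : M^\wedge \to S^\wedge$ (continuity of $r$ being a general feature of the compact-open topology).

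The main obstacle is uniqueness in (3). Given a topological module structure $\tau$ with $\Psi_\tau : M \to M^{\wedge\wedge}_\tau$ injective and continuous, we already have $\tau \subseteq \tau_1$ from the $\tau_1 = \tau_2$ argument, so the task is to prove $\tau_1 \subseteq \tau$. My plan is to exploit continuity of $\Psi_\tau$, which forces each preimage $\bigcap_{\varphi \in C} \varphi^{-1}(U)$ to be $\tau$-open for every compact $C \subseteq M^\wedge_\tau$ and open $U \subseteq T$. Combining this with injectivity of $\Psi_\tau$ (so $M^\wedge_\tau$ separates points of $M$) and the key observation that $T = K/R$ is in fact discrete (since $R$ is compact open in $K$), I aim to exhibit an arbitrary basic $\tau_1$-open neighborhood of $0$ as such an intersection, building the compact sets $C$ from the dual description of the f.g.\ filtration of $M$ and the fact that duals of compact modules are discrete in the compact-open topology (compact source, discrete target). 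Producing enough compactness on the dual side to capture every $\tau_1$-open neighborhood will be the central difficulty.
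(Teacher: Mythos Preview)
Your plan for $\tau_1=\tau_2$ and for the existence direction of (3) is essentially the paper's argument (Corollary~\ref{Lemma3} and Lemma~\ref{Lemma4}), though you gloss over why the module operations are continuous for $\tau_1$; the paper handles this carefully by noting that to check continuity of, say, addition at a given f.g.\ $S\leq M\times M$ one enlarges to $S'\times S'$ with $S'\leq M$ f.g., rather than appealing to a general ``direct limits commute with finite products'' principle that need not hold here.

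The genuine gap is in your uniqueness argument for (3). You propose to show $\tau_1\subseteq\tau$ by exhibiting every basic $\tau_1$-neighborhood of $0$ as $\bigcap_{\varphi\in C}\varphi^{-1}(U)$ for some compact $C\subseteq M^{\wedge}_{\tau}$, but you give no mechanism for producing such $C$, and you yourself flag this as the ``central difficulty''. The obstacle is real: a priori $M^{\wedge}_{\tau}$ may be a proper subset of $\Hom_R(M,T)$, and you have no handle on its compact subsets beyond the bare fact that it separates points. Nothing in your outline bridges the gap between ``separates points'' and ``generates $\tau_1$''.

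The paper sidesteps this entirely with a squeeze argument (Proposition~\ref{Lemma6}). Let $M_0$ denote $M$ with topology $\tau_1$. Since $\tau\subseteq\tau_1$, the identity $M_0\to M$ is continuous, and dualizing gives a continuous injection $M^{\wedge}_{\tau}\hookrightarrow M_0^{\wedge}$. The key input is Lemma~\ref{Lemma5}: \emph{the dual of a canonical module is canonical}. Hence $M_0^{\wedge}$ is canonical; since submodules of canonical modules are canonical (Proposition~\ref{Lemma2}), and since $\tau_1$ is the finest module topology, the continuous injection forces $M^{\wedge}_{\tau}$ to be canonical. Applying Lemma~\ref{Lemma5} again, $M^{\wedge\wedge}_{\tau}$ is canonical. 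Now the hypothesis that $\Psi_\tau:M\to M^{\wedge\wedge}_{\tau}$ is a continuous injection, combined once more with the same squeeze (Lemma~\ref{continuous injections}), forces $\tau=\tau_1$. The missing idea in your proposal is precisely this ``dual of canonical is canonical'' step, which lets one transport canonicity through the dual and back rather than analyze $\tau_1$-open sets directly.
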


With the above results in mind, it is clear that every locally compact topological module has the canonical topology: Their double dual maps are all injective and continuous by Theorem \ref{Local Duality Theorem}, and hence canonical by point (3) of Theorem \ref{Equivalences theorem}. Thus, the statement that there is at most one locally compact topology in Theorem \ref{Topology Forcing} is a simple corollary of the fact that the canonical topology is well defined and unique.

In general the double dual map on canonical modules will not be surjective, with failures as simple as $M=\bigoplus_{n=0}^{\infty} R$, the direct sum of infinitely many copies of $R$. The computation of this fact is not difficult, with the only apparent difficulty coming from the problem of determining which maps in the dual are continuous. This is no problem at all, however, because of the following theorem:

\begin{theorem}\label{fully faithful} Every homomorphism between canonical $R$-modules is continuous. In particular, the functor defined by sending an $R$-module to itself equipped with canonical topology is fully faithful.
\end{theorem}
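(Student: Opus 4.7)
The plan is to use the direct limit characterization of the canonical topology (point (1) of Theorem \ref{Equivalences theorem}). Let $f\colon M\to N$ be an $R$-module homomorphism between canonical modules. Since $M$ carries the final topology with respect to the inclusions $S\hookrightarrow M$ as $S$ runs over finitely generated submodules (each equipped with the unique topology of Theorem \ref{fin gen uniqueness}), the universal property of the direct limit reduces continuity of $f$ to continuity of each restriction $f|_S\colon S\to N$. Such a restriction factors as $S\to f(S)\hookrightarrow N$, where the inclusion of the finitely generated submodule $f(S)$ into $N$ is continuous by the direct limit description of the topology on $N$. This reduces the theorem to the following claim: every $R$-module homomorphism between finitely generated $R$-modules, each equipped with its unique topological module structure, is continuous.

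For this reduced statement, I would invoke the structure theorem for finitely generated modules over the PID $R$, which gives any finitely generated $S$ a decomposition $S\cong R^a\oplus\bigoplus_i R/\mm^{n_i}$. By Theorem \ref{fin gen uniqueness}, the unique topology on such an $S$ is the product of metric topologies on the $R$ factors with discrete topologies on the torsion factors. Since an $R$-linear map between finite direct sums is determined by its component maps between indecomposable summands, and continuity of an additive map into a product reduces to continuity of each coordinate while continuity out of a finite product of topological modules follows from continuity on each summand (using continuity of addition), the problem reduces to four elementary cases: (a) a map $R\to R$ is multiplication by an element of $R$, which is continuous because $R$ is a topological ring; (b) a map $R\to R/\mm^n$ is such a multiplication followed by the continuous quotient map; (c) a map $R/\mm^n\to R$ must vanish since $R$ is torsion-free, hence is trivially continuous; (d) a map $R/\mm^n\to R/\mm^m$ is continuous because its source is discrete.

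Once continuity is established, the fully faithful statement is formal: the functor $M\mapsto(M,\tau_{\mathrm{can}})$ acts as the identity on underlying set maps, so faithfulness is automatic, and the content of the first part of the theorem is precisely that every $R$-module map between canonical modules is a morphism in the target category of topological $R$-modules, giving fullness. The main technical point to verify carefully is the reduction to indecomposable summands in the finitely generated case; this is routine but depends on the observation that with the product topology on both source and target, continuity of an $R$-linear map is equivalent to continuity of each of its matrix entries between indecomposable factors. The ingredient making the whole argument possible is that the structure theorem leaves no room for exotic finitely generated pieces, so the four cases above exhaust all possibilities.
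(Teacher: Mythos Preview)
Your proof is correct and follows essentially the same strategy as the paper: reduce via the direct limit description to maps out of finitely generated modules, then handle those using the structure theorem and the elementary cases for indecomposable summands. The only cosmetic difference is that the paper keeps the target arbitrary and factors $R\to M$ through its kernel (using that submodules of canonical modules are canonical), whereas you first factor through the image $f(S)$ to reduce the target to finitely generated and then argue via matrix entries; both routes arrive at the same handful of trivial base cases.
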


This is useful, since it allows us to ignore topology in computations with canonical modules and use purely algebraic methods. An analogy can be drawn to the case of abelian groups, where one has a similar embedding sending any group to itself equipped with discrete topology.

While the results of Theorems  \ref{Topology Forcing}, \ref{fin gen uniqueness}, \ref{Equivalences theorem}, and \ref{fully faithful} appear to be new to literature, Theorem \ref{Local Duality Theorem} is referenced implicitly by Flood in \cite{flood1979pontryagin}. Namely, the following setup is considered. We let $A$ be a locally compact topological ring, and define $\TT_A$ to be the space of continuous $\ZZ$-module homomorphisms from $A$ to $\RR/\ZZ$, equipped with the $A$-module structure $(a\cdot \varphi)(x)=\varphi(a\cdot x)$. We define the dual $M^{\wedge}$ of a topological module $M$ to be the space of continuous $A$-module homomorphisms $M\xrightarrow{}\TT_A$ endowed with the compact-open topology. We have the following:

\begin{theorem}\label{simplified adjointness} Let $M$ be an $A$-module. Denote by $M^{\wedge}_{(\ZZ)}$ the $\ZZ$-module dual of morphisms into $\RR/\ZZ$, and by $M^{\wedge}_{(A)}$ the $A$-module dual of morphisms into $\TT_A$. The map

\begin{align*}
M^{\wedge}_{(A)}&\xrightarrow{}M^{\wedge}_{(\ZZ)}\\
\Phi &\mapsto \left(m\mapsto \Phi(m)(1)\right)
\end{align*}

is an isomorphism and homeomorphism.
\end{theorem}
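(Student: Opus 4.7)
The plan is to construct an explicit inverse to the stated map and to reduce continuity in both directions to the exponential law for mapping spaces, leaning on the local compactness of $A$. Given $\psi \in M^{\wedge}_{(\ZZ)}$, I would define $\widehat{\psi} \colon M \to \TT_A$ by $\widehat{\psi}(m)(a) = \psi(a \cdot m)$. The first routine verification is that $\widehat{\psi}$ is well-defined and $A$-linear: for fixed $m$ the slice $a \mapsto \psi(am)$ is continuous and $\ZZ$-linear as the composition of scalar multiplication with $\psi$, so it lies in $\TT_A$, and the computation
\begin{align*}
\widehat{\psi}(bm)(a) = \psi(abm) = \widehat{\psi}(m)(ab) = (b \cdot \widehat{\psi}(m))(a)
\end{align*}
confirms $A$-linearity relative to the action on $\TT_A$.

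The analytic core of the argument is the continuity of $\widehat{\psi}$ and of the assignment $\psi \mapsto \widehat{\psi}$. Here local compactness of $A$ enters via the exponential law: since $A$ is locally compact Hausdorff, the natural map $C(A \times M, \RR/\ZZ) \to C(M, C(A, \RR/\ZZ))$ sending $f \mapsto (m \mapsto (a \mapsto f(a,m)))$ is a homeomorphism in the compact-open topologies. The two-variable function $(a,m) \mapsto \psi(am)$ is the composition $\psi \circ \mu$ with $\mu$ the continuous scalar multiplication, hence lies in $C(A \times M, \RR/\ZZ)$; its transpose is exactly $\widehat{\psi}$, so $\widehat{\psi}$ is continuous and lands in $\TT_A \subset C(A, \RR/\ZZ)$. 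Moreover the full assignment $\psi \mapsto \widehat{\psi}$ factors as pullback by $\mu$ followed by the exponential homeomorphism, so it is itself continuous.

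In the reverse direction, the given map $\Phi \mapsto (m \mapsto \Phi(m)(1))$ is post-composition with the continuous evaluation-at-$1$ map $\mathrm{ev}_1 \colon \TT_A \to \RR/\ZZ$, and post-composition with a continuous map is automatically continuous on compact-open mapping spaces, independently of any hypothesis on $A$. The two round trips are short algebraic identities: starting from $\Phi$ one computes $\widehat{\Phi(\cdot)(1)}(m)(a) = \Phi(am)(1) = (a \cdot \Phi(m))(1) = \Phi(m)(a)$ using $A$-linearity of $\Phi$ and the definition of the $\TT_A$-action, while starting from $\psi$ one has $\widehat{\psi}(m)(1) = \psi(1 \cdot m) = \psi(m)$. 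The only genuine obstacle is the continuity step for $\psi \mapsto \widehat{\psi}$, and that is precisely where the local compactness of $A$, via the exponential law, is indispensable.
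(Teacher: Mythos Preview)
Your proposal is correct and follows essentially the same architecture as the paper: you build the explicit inverse $\psi \mapsto \widehat{\psi}$ with $\widehat{\psi}(m)(a)=\psi(am)$, verify $A$-linearity, check continuity in both directions, and confirm the two round trips are identities. The paper's proof (Proposition~\ref{Flood thing}) does all of this in the same order with the same inverse.

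The only difference is in how the continuity is packaged. The paper works directly with subbasic open sets: for the forward map it computes $i^{-1}(V(Z,U))=V\big(Z,\,V(\{1\},U)\big)$, and for the inverse it computes $j^{-1}\big(V(Z,V(L,U))\big)=V(L\cdot Z,U)$, using that $L\cdot Z$ is compact as the image of $L\times Z$ under scalar multiplication. You instead invoke the exponential law $C(A\times M,\RR/\ZZ)\cong C(M,C(A,\RR/\ZZ))$ as a black box and factor $\psi\mapsto\widehat{\psi}$ through it. Both are fine; your route is slightly more conceptual and makes the role of local compactness of $A$ transparent, while the paper's explicit computation is self-contained and avoids citing the exponential law. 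The essential content---that compactness of products like $L\cdot Z$ is what makes the inverse continuous---is the same either way.
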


Thus, taking $A$-duals and $\ZZ$-duals is the same, and hence Pontryagin duality as stated in Theorem \ref{Pontryagin Duality Theorem} easily allows one to deduce that taking $A$-duals induces an isomorphism on double duals. While no special mention is made to the case that $A$ is a compact DVR in either \cite{flood1979pontryagin} or earlier works of this sort such as \cite{levin1973locally}, the duality stated in the above paragraph can be used to derive Theorem \ref{Local Duality Theorem} via the following equivalence:

\begin{theorem}\label{Flood isomorphism theorem} There is a non-canonical $R$-module isomorphism and homeomorphism $\TT_R\cong T$.
\end{theorem}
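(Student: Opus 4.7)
The plan is to construct an explicit $R$-linear map $\Psi : T \to \TT_R$ of the form $\Psi(t)(r) = \lambda(rt)$, for a well-chosen continuous character $\lambda : T \to \RR/\ZZ$, and then argue that $\Psi$ is a bijection; the homeomorphism claim then becomes free. To see this last point, I would first check that both sides are actually discrete. Since $R$ is a compact open subring of $K$, the quotient $T = K/R$ is discrete, and decomposes as $T = \bigcup_{n \ge 0} T[\pi^n]$ with $T[\pi^n] = \pi^{-n}R/R$ a cyclic $R/\pi^n R$-module of cardinality $q^n$, where $q = |R/\pi R|$. For $\TT_R$, any continuous $\varphi : R \to \RR/\ZZ$ has compact, totally disconnected, hence finite image in $\RR/\ZZ$, so $\ker\varphi$ is open and contains some $\pi^n R$; thus $\TT_R = \bigcup_n \TT_R[\pi^n]$ with $\TT_R[\pi^n] = \Hom(R/\pi^n R, \RR/\ZZ)$ of cardinality $q^n$ by classical Pontryagin duality for finite abelian groups.

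For the construction of $\lambda$, I would split by the characteristic of $R$. In equal characteristic $R \cong \FF_q\llbracket \pi\rrbracket$, I take $\lambda$ to factor through the residue pairing: extract the coefficient of $\pi^{-1}$ in the Laurent expansion of a representative, then apply $\mathrm{tr}_{\FF_q/\FF_p}$ and divide by $p$. This visibly vanishes on $R$ and, because the absolute trace is surjective, is nonzero on some $c\pi^{-1} + R \in T[\pi]$. In mixed characteristic, where $R$ is the ring of integers of a finite extension $K/\QQ_p$, I fix a generator $\delta$ of the inverse different $\mathfrak{D}_{R/\ZZ_p}^{-1}$ (which exists since it is a principal fractional ideal of the DVR $R$) and set $\lambda(x) = \mathrm{Tr}_{K/\QQ_p}(\delta x) \bmod \ZZ_p$, using the standard embedding $\QQ_p/\ZZ_p \hookrightarrow \RR/\ZZ$. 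The defining identity $\delta R = \mathfrak{D}^{-1}$ forces $\lambda|_R = 0$ while $\lambda|_{\pi^{-1}R}$ is nontrivial, which is precisely the non-degeneracy I need. In both cases, once $\lambda$ is in place I define $\Psi(t)(r) = \lambda(rt)$; additivity is immediate, $R$-linearity holds because $\Psi(at)(r) = \lambda(art) = \Psi(t)(ar) = (a\Psi(t))(r)$, and continuity of each $\Psi(t)$ is automatic since $T$ is discrete.

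Finally, to show $\Psi$ is bijective, I would argue at each torsion level: $\Psi$ restricts to $\Psi_n : T[\pi^n] \to \TT_R[\pi^n]$, and since both finite sides have cardinality $q^n$, it suffices to prove $\Psi_n$ is injective. If $\Psi(t) = 0$ for $t = \pi^{-n}a + R$ with $a = \pi^k u$, $u \in R^\times$, and $0 \le k < n$, then $\lambda$ vanishes on the entire $R$-submodule $Rt = \pi^{k-n}R/R \supseteq T[\pi]$, contradicting the construction of $\lambda$. The hardest step is really the mixed-characteristic construction of $\lambda$: the naive trace character $x \mapsto \mathrm{Tr}_{K/\QQ_p}(x) \bmod \ZZ_p$ vanishes on the full inverse different $\mathfrak{D}^{-1} \supseteq R$, and therefore vanishes on $T[\pi]$ whenever the extension is ramified, so one is forced to pre-twist by a generator $\delta$ of $\mathfrak{D}^{-1}$ in order to recover the required non-degeneracy. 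Once this $\lambda$ is in place the rest reduces to the cardinality argument above, and the non-canonicity of the stated isomorphism shows up as the freedom in choosing $\lambda$ (the uniformizer underlying $\res$ in equal characteristic, or the generator $\delta$ of $\mathfrak{D}^{-1}$ in mixed characteristic).
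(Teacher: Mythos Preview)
Your argument is correct and takes a genuinely different route from the paper's. Both proofs split on characteristic, but the paper only treats equal characteristic explicitly (via a direct map $\TT_R\to T$ reading off Laurent coefficients against a fixed identification $[\FF_q,\RR/\ZZ]_\ZZ\cong\FF_q$); in mixed characteristic the paper is deliberately non-constructive: it uses the adjunction $[T,\TT_R]_R\cong[T,\RR/\ZZ]_\ZZ$ merely to produce \emph{some} nonzero $R$-map $T\to\TT_R$, observes that every proper quotient of $T$ is again isomorphic to $T$ so that this can be upgraded to an injection, and then finishes by counting $p^n$-torsion on each side. Your map $\Psi(t)(r)=\lambda(rt)$, with $\lambda$ the residue--trace character in equal characteristic and the $\mathfrak{D}^{-1}$-twisted trace character in mixed characteristic, is the standard additive character of local fields (as in Tate's thesis); it gives a single uniform and explicit construction in both cases and pinpoints the non-canonicity as the choice of $\lambda$. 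The paper's version has the virtue of needing no input about the different, while yours is more concrete and more in line with how this isomorphism is actually used in number theory.

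One small point to tighten: the observation that every continuous $\varphi:R\to\RR/\ZZ$ has finite image and hence open kernel is exactly what you need for the torsion filtration $\TT_R=\bigcup_n\TT_R[\pi^n]$ and the cardinality match, but it does not by itself show that the compact-open topology on $\TT_R$ is discrete. For that you want one more line: either cite the classical fact that the Pontryagin dual of a compact group is discrete, or note directly that $V(R;(-\tfrac13,\tfrac13))=\{0\}$ since the only subgroup of $\RR/\ZZ$ contained in $(-\tfrac13,\tfrac13)$ is trivial. The paper handles this step differently, by invoking Theorem~\ref{Topology Forcing} together with the local compactness of $\TT_R$ coming from classical Pontryagin duality.
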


Thus, $T$ and $\TT_R$ yield the same duality theory, and so $\TT_R$ inducing isomorphism/homeomorphisms with its double duals guarantees the same for $T$. To elaborate on the non-canonicalness of the isomorphism $\TT_R\cong T$, in general there is no clear way to write down any isomorphism between the two. To construct one, we first appeal to the classification theorem of complete DVRs given in \cite{serre2013local}, which states that $R$ is either isomorphic to the completed polynomial ring $\FFqx$ over a finite field or isomorphic to the ring of integers $\mathcal{O}_K$ of a finite extension $K/\QQ_p$. In the case of polynomial rings, isomorphisms can be written down explicitly, though still not canonically. In the case of rings of integers of finite extensions the proof is non-constructive.

Out of all the rings covered by \cite{flood1979pontryagin}, some global rings also manifest particularly pleasant duality theories:

\begin{theorem}\label{Global Flood isomorphism} Let $A$ be a non-field domain, and $|\cdot|$ an archimedean absolute value on $A$. Then $A$ is complete and locally compact with respect to its topology if and only if it is isomorphic to either $\ZZ$ or $\ZZ\left[\delta\right]$, $\delta=\left(b+\sqrt{b^2+4a}\right)/2$ where $a$ is a negative integer and $b$ is an integer satisfying $\left|b\right|<2\sqrt{-a}$. In the case that $A=\ZZ[\delta]$, there is a canonical isomorphism and homeomorphism

\begin{align*}
\TT_A&\xrightarrow{\sim} \CC/A.\\
\varphi&\mapsto \varphi(1)+\varphi(\delta)\delta
\end{align*}

Thus,  in all locally compact complete cases the double dual map $M\xrightarrow{}M^{\wedge\wedge}$ on $A$-modules $M$ is an isomorphism and homeomorphism, where $\widehat{M}$ is defined to be the space of continuous $A$-module homomorphisms into $\overline{F}/A$, with $\overline{F}$ being the fraction field of $A$ completed with respect to its topology.
\end{theorem}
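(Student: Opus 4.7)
The plan is to proceed in three stages: classify the admissible rings $A$, construct the explicit isomorphism $\TT_A\cong\CC/A$ in the non-trivial case, and derive the double duality statement from Theorem \ref{simplified adjointness} and classical Pontryagin duality.

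For the classification, I would invoke a standard Ostrowski-type argument. An archimedean absolute value on a domain extends to its fraction field, whose completion embeds as a topological field into $\CC$. Completeness of $A$ then forces the image of $A$ in $\CC$ to be closed, while local compactness together with the hypothesis that $A$ is not a field rules out $A\cong\RR$ and $A\cong\CC$ and, in fact, forces $A$ to be discrete, since any non-discrete subring of $\CC$ has closure $\RR$ or $\CC$. One then classifies discrete subrings of $\CC$ containing $1$: in $\RR$, such a subring must equal $\ZZ$, since a discrete subgroup containing $1$ has the form $\tfrac{1}{n}\ZZ$ and closure under multiplication forces $n=1$; in $\CC\setminus\RR$, a rank-$2$ $\ZZ$-lattice $\ZZ\oplus\ZZ\delta$ is a subring precisely when $\delta^2=b\delta+a$ for integers $a,b$, giving $\delta=(b\pm\sqrt{b^2+4a})/2$, with non-reality of $\delta$ equivalent to $b^2+4a<0$, i.e., $|b|<2\sqrt{-a}$, which automatically forces $a<0$.

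For the explicit isomorphism in the case $A=\ZZ[\delta]$, discreteness of $A$ makes every $\ZZ$-module homomorphism $A\to\RR/\ZZ$ continuous, so $\TT_A=\Hom_\ZZ(\ZZ\oplus\ZZ\delta,\RR/\ZZ)$, and evaluation at the basis $\{1,\delta\}$ identifies $\TT_A$ with $(\RR/\ZZ)^2$ as topological abelian groups (the compact-open topology reduces to the pointwise one when the source is discrete). The assignment $(r,s)\mapsto r+s\delta$ then identifies $(\RR/\ZZ)^2$ with $\CC/A$ as topological abelian groups, because $\{1,\delta\}$ is an $\RR$-basis of $\CC$ whose $\ZZ$-span equals $A$. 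The one non-trivial step is $A$-linearity of the composite $\varphi\mapsto\varphi(1)+\varphi(\delta)\delta$; by $\ZZ$-linearity this reduces to compatibility with multiplication by the single element $\delta$, which is a direct calculation in $\CC/A$ using the minimal relation $\delta^2=b\delta+a$.

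For the double duality conclusion, I would combine Theorem \ref{simplified adjointness}, identifying $M^{\wedge}_{(A)}$ and $M^{\wedge}_{(\ZZ)}$ as topological $A$-modules, with classical Pontryagin duality (Theorem \ref{Pontryagin Duality Theorem}), which makes $M\to M^{\wedge\wedge}_{(\ZZ)}$ an isomorphism and homeomorphism. Composing gives that the double $A$-dual map $M\to M^{\wedge\wedge}_{(A)}$ is an isomorphism and homeomorphism of topological groups, with $A$-linearity following from naturality of the construction. Transporting along $\TT_A\cong\overline{F}/A$ — trivial for $A=\ZZ$, supplied by the previous paragraph for $A=\ZZ[\delta]$ — reformulates the statement in terms of $\overline{F}/A$. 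The main obstacle is the $A$-linearity check in the explicit isomorphism step: it is where the arithmetic of imaginary quadratic orders enters and where the canonicalness of the map becomes manifest, in contrast with the non-canonical nature of Theorem \ref{Flood isomorphism theorem}.
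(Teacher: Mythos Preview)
Your three-stage plan mirrors the paper's own structure exactly: Lemma~\ref{Archimedean lemma 1} for the classification (where the paper cites \cite{shell1996discrete} rather than sketching the lattice argument directly as you do), Lemma~\ref{New flood isomorphism yum} for the explicit map, and Lemma~\ref{omg last lemma} deducing double duality from Theorem~\ref{simplified adjointness} together with classical Pontryagin duality.

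The one substantive issue is precisely the step you single out as ``the one non-trivial step'': $A$-linearity of $F:\varphi\mapsto\varphi(1)+\varphi(\delta)\delta$. This calculation does \emph{not} go through. Writing $r=\varphi(1)$, $s=\varphi(\delta)$ and using $\delta^2=a+b\delta$, one finds
\[
F(\delta\varphi)-\delta\,F(\varphi)=\bigl(s+(ar+bs)\delta\bigr)-\bigl(as+(r+bs)\delta\bigr)=(1-a)\bigl(s-r\delta\bigr),
\]
and since $a\le -1$ the factor $1-a\ge 2$, so this lies in $A$ only for very special $r,s$; concretely, for $A=\ZZ[i]$ with $r=1/3$, $s=0$ one gets $-2i/3\notin\ZZ[i]$. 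The paper's own proof of Lemma~\ref{New flood isomorphism yum} never addresses $A$-linearity, so the gap is shared; but you affirmatively assert the check succeeds, which is false as stated. An $A$-linear isomorphism $\TT_A\to\CC/A$ does exist --- for instance $\varphi\mapsto\varphi(\delta)-\bar\delta\,\varphi(1)$, whose $A$-linearity one verifies from $\delta+\bar\delta=b$ and $\delta\bar\delta=-a$ --- so the downstream duality conclusion survives; only the displayed formula requires correction.
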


To demonstrate the above theory, we work the case of $\ZZ_p$-modules is detail. Suppose we are given a $\ZZ_p$-module $M$. These often times appear in Iwasawa theory, where $M$ will be some Galois group or Selmer group. In this context the dual $M^{\wedge}$ consisting of all morphisms into $\QQ_p/\ZZ_p$ is extremely natural, and is often refered to as the Pontryagin dual of $M$. We have shown that this name is warranted, since our results imply that when equipped with the canonical topology the natural map

$$\Hom_{\ZZ}^{\mathrm{cont}}(M,\RR/\ZZ)\xrightarrow{} \Hom_{\ZZ_p}(M,\QQ_p/\ZZ_p)$$

is an isomorphism and homeomorphism. The above map is defined by taking a morphism $\varphi: M\xrightarrow{} \RR/\ZZ$, and noting that for any $m\in M$ the condition $p^n\cdot m\to 0$ implies that $p^n\varphi(m)\to 0$ and hence $m$ is a rational number with $p$-power denominator, and hence can naturally be identified with an element of $\QQ_p/\ZZ_p$. Thus, the codomain of $\varphi$ can be taken to be $\QQ_p/\ZZ_p$ and the above map is defined to be the identity.

A natural question one might ask is the follows: Given the $\ZZ_p$-module $M$, when is the algebraic double dual map

$$M\xrightarrow{} \Hom_{\ZZ_p}\left(\Hom_{\ZZ_p}(M,\QQ_p/\ZZ_p),\QQ_p/\ZZ_p\right)$$

an isomorphism? Our paper gives a very testable condition: It will be an isomorphism whenever $M$ has any locally compact topological module structure. This is because as soon as $M$ has some locally compact topological module structure, we know that this structure must be canonical and hence the topological dual $M^{\wedge}$ consists of all $\ZZ_p$-module morphisms $M\xrightarrow{} \QQ_p/\ZZ_p$. The topological double dual map is an isomorphism for $M$ by Theorem \ref{Local Duality Theorem}, and using a bit more control (i.e., ``dual of canonical is canonical") we immediately get that the algebraic double dual map is an isomorphism.

This condition is simple to check in many scenerios. When $M$ is a Galois group it naturally comes equipped with a profinite topology which will be compact. When $M$ is torsion the canonical topology is clearly discrete. The condition can be stated in an if and only if manner: The algebraic double dual map on a $\ZZ_p$-module will be an isomorphism if and only if the canonical topology (or any topological module structure for that matter) is ``nuclear" in the sense of \cite{banaszczyk2006additive}.

The theory of compact DVRs is far simpler than that of $\ZZ$ in the context of duality for a few reasons. Firstly $R$ is compact, not just locally compact, which is very useful since compact metric spaces enjoy many nice properties. Secondly, $K$ is already complete so there is no analogue of the discrepancy between $\QQ$ and $\RR$. Lastly, the canonical topology on $T$ is discrete (this is Lemma \ref{T discrete topology}), which makes the consideration of continuous morphisms into $T$ far simpler.

Seeing as the canonical topology was defined as the simultaneous presence of three properties, none of which we have shown to be well defined and none of which we have shown to be equivalent, the notation currently has no meaning. For the sake of the rest of this introduction we use the term canonical to refer to the topology defined in point (1) of Theorem \ref{Equivalences theorem}. The proof that this topology is well defined is relatively straightforward, consisting of a large number of explicit computations; it is performed in the beginning of § \ref{The Canonical Topology}.

We now describe the proofs of Theorem \ref{Equivalences theorem}. The (1)-(2) equivalence is straightforward from Theorem \ref{fin gen uniqueness}, by the universal property of the direct limit. Using explicit calculations in the finitely generated case and limiting arguments in the general case, it is also not too difficult to prove that  the double dual map on canonical modules is injective and continuous. The real difficulty comes in showing that if the double dual map is injective and continuous, then the module is canonical.

Choose a topological module $M$, and let $M_0$ denote that same module equipped with the canonical topology. The (1)-(2) equivalence implies that the identity map $M_0\xrightarrow{}M$ is continuous. Taking duals, we get a continuous injection $M^{\wedge}\xhookrightarrow{\varphi} M_0^{\wedge}$. Restricting to the image, we get a continuous bijection $M^{\wedge}\xhookrightarrow{\varphi} \left.M_0^{\wedge}\right|_{\varphi\left(M^{\wedge}\right)}$. It is then proved that the dual of a canonical module is canonical, and that submodules of canonical modules are canonical. In particular, we have that $\left.M_0^{\wedge}\right|_{\varphi\left(M^{\wedge}\right)}$ is canonical. Thus, $M^{\wedge}$ has a continuous isomorphism into the module equal to it algebraically but with canonical topology, and hence the topology on $M^{\wedge}$ is finer than the canonical topology. However, by point (2) of Theorem \ref{Equivalences theorem} the canonical topology on $M^{\wedge}$ is the finest possible, and so we must have that the compact-open topology on $M^{\wedge}$ is canonical

Since duals of canonical modules are canonical, we have now that $M^{\wedge\wedge}$ is canonical. By assumption the double dual map $M\xrightarrow{}M^{\wedge\wedge}$ is injective and continuous. Using the technique above of restricting to the image and squeezing with point (2) of Theorem \ref{Equivalences theorem}, we get that $M$ must be canonical so we are done.

We now describe the structure of this paper:

$\newline$

§ \ref{The Canonical Topology} proves that the canonical topology exists, as well as showing some of its first properties. Namely, Theorem \ref{fin gen uniqueness} is deduced.

§ \ref{The Double Dual Map} computes the duals of modules of interest. It is shown quickly that Theorem \ref{fully faithful} is true, and later it is shown that the double dual map is always injective and continuous for canonical modules

§ \ref{Topological Modules over Compact DVRs} proves relevant results about topological modules over compact DVRs, culminating in proofs of Theorems \ref{Topology Forcing} and \ref{Equivalences theorem}.

§ \ref{Global to Local} proves Theorem \ref{Flood isomorphism theorem}, as well as delivering a proof of the adjointness result from \cite{flood1979pontryagin}. In particular, enough is shown to deduce Theorem \ref{Local Duality Theorem} using Pontryagin duality.

§ \ref{Completing the Proof} deals with the case that $A$ is a ring that is complete and locally compact with respect to an Archimedean absolute value. Namely, a proof of Theorem \ref{Global Flood isomorphism} is given.

$\newline$

Notation:

\begin{itemize}

\item All rings are commutative unital, and all modules are unital.

\item All topological modules are Hausdorff

\item The word compact is reserved for compact Hausdorff. To say simply compact we will say quasi-compact, and similarly for locally compact.

\item We use $\HM_{A}$ (resp. $\LCM_{A}$) to denote the category whose objects are Hausdorff (resp. locally compact) topological modules and whose morphisms are continuous $A$-module homomorphisms.

\item We will use $\left[\cdot,\cdot\right]_A$ to mean the (internal) hom over $\HM_{A}$ whenever it does not cause confusion, i.e., the space of continuous $A$-module morphisms from the target to the source, with premultiplication module structure. When the ring is understood we drop the subscript.

\item We use self double-dual to refer to topological $a$-modules for which the double dual map is an $\HM_{A}$-isomorphism.

\item When use $V_{M}(Z_1...Z_n;U_1...U_n)$ to denote the set of all elements $\varphi\in\widehat{M}$ for which $\varphi(Z_i)\subseteq U_i$, where $M$ is any topological module. If $M$ is understood, we refer simply to $V(Z_1...Z_n;U_1...U_n)$.

\item For brevity, we occasionally abbreviate ``finitely generated" to fin gen when it does not cause confusion
\end{itemize}

\section{The Canonical Topology}\label{The Canonical Topology}

This section seeks to show the well-definiteness of the canonical topology for finitely generated modules over a fixed compact DVR $R$, as well as some of its first properties. Currently the notion has no meaning, so in the interim we will be referring to product topologies on finitely generated modules: That is, those topologies that arise from first choosing an isomorphism $X\cong \prod R/(x)$ between $X$ and some product of principal quotients, and then putting the canonical topologies (discrete when $x\neq 0$, metric when $x=0$) on each component, and then putting the product topology on $X$. We will show that all product topologies on $X$ are equal, and give rise to the canonical topology.

To begin, we prove a few simple lemmas that will be useful throughout the paper:

\begin{lemma}\label{T discrete topology} The quotient topology on  $T$ is discrete.
\end{lemma}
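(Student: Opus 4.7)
The plan is to reduce the claim to showing that $R$ is open as a subset of $K$. By definition of the quotient topology on $T = K/R$, a subset $U \subseteq T$ is open if and only if its preimage under the projection $\mathrm{pr}\colon K \to T$ is open in $K$. In particular, the singleton $\{0\} \subseteq T$ is open if and only if $R = \mathrm{pr}^{-1}(\{0\})$ is open in $K$. Since $T$ is a topological group under addition, translation by any fixed element is a homeomorphism, so openness of $\{0\}$ immediately propagates to openness of every singleton, yielding discreteness.

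Thus everything reduces to verifying that $R$ is open in $K$. This is built into the way the topology on $K$ extends the topology on $R$: a neighborhood basis of $0$ in $K$ is given by $\{\pi^n R : n \in \ZZ\}$, so in particular $R = \pi^0 R$ is itself a basic open neighborhood of $0$. Equivalently, under the non-archimedean absolute value extended to $K$, the subring $R$ coincides with both the closed unit ball $\{x \in K : |x| \le 1\}$ and the open ball $\{x \in K : |x| < |\pi|^{-1}\}$, thanks to ultrametricity and the discreteness of the value group. Either description makes the openness of $R$ in $K$ manifest.

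There is no genuine obstacle here. The only thing to make sure of is that the ambient topology on $K$ is indeed the valuation topology (which the paper has already fixed), and to recognize that, in contrast with the archimedean case of $\RR/\ZZ$, the subring $R \subset K$ is itself already open — a consequence of the non-archimedean structure of the valuation. This is morally one of the reasons the local duality theory will turn out to be simpler than its archimedean analogue, as the introduction foreshadowed.
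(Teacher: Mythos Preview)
Your proof is correct and follows essentially the same approach as the paper. The paper computes directly that the preimage of an arbitrary singleton $\{[t]\}$ is the coset $t+R=\{x\in K:|x-t|\le 1\}$, which is open; you instead check this only for $t=0$ and then propagate via translation homeomorphisms --- a cosmetic difference only.
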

\begin{proof} The preimage of the equivalence class $[t]\in T$ under the projection $K\xrightarrow{} T$ is $t+R$, which is open in $K$ since it can be equivalently described as the open non-archimedean ball $\{x\in K\st |x-t|\leq 1\}$. Thus, by the definition of the quotient topology, each singleton in $T$ must be open so the topology is discrete.
\end{proof}

\begin{lemma}\label{biproduct} Given any two topological modules $M,N$ over a topological ring $A$, the direct product with product topology $M\times N$ is again a topological $A$-module. Moreover, it is a biproduct in the category of topological modules, i.e., it satisfies the universal properties of both the product and coproduct. Additionally, the product of complete modules is complete.
\end{lemma}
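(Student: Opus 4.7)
The plan is to verify the three claims one at a time, leaning throughout on the universal property of the product topology and on the jointly continuous algebraic operations already present on $M$, $N$, and (for the coproduct part) on the target modules. First, to see that $M\times N$ with the product topology is a topological $A$-module, I would use the fact that a map into a product is continuous iff its coordinate projections are. The addition map $(M\times N)\times(M\times N)\to M\times N$ has first coordinate the composition of the projection $(M\times N)^2\to M^2$ with addition on $M$, and similarly in the second coordinate; both are continuous by hypothesis on $M$ and $N$. Inversion and scalar multiplication $A\times(M\times N)\to M\times N$ are handled identically. Hausdorffness of the product is a standard point-set fact.

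For the biproduct statement, the product side is nearly formal: the projections $\pi_M,\pi_N$ are continuous $A$-linear maps by definition of the product topology, and for continuous $A$-linear $f:P\to M$, $g:P\to N$ the induced map $(f,g):P\to M\times N$ is $A$-linear coordinatewise and is continuous by the universal property of the product topology. The coproduct direction is where the module structure does actual work. Given continuous $A$-linear $f:M\to Q$, $g:N\to Q$, the only algebraic candidate for the unique extension along the inclusions $i_M(m)=(m,0)$ and $i_N(n)=(0,n)$ is $h(m,n)=f(m)+g(n)$, and I would argue that $h$ is continuous by factoring it as
\[
M\times N\xrightarrow{f\times g} Q\times Q\xrightarrow{+} Q,
\]
both factors being continuous (the first by the product-topology universal property, the second because $Q$ is a topological module). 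Continuity of $i_M, i_N$ is of the same flavor.

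For completeness, I would work with the canonical uniform structure on a topological module coming from translation-invariant entourages. The projections $\pi_M,\pi_N$ are uniformly continuous, so a Cauchy net in $M\times N$ projects to Cauchy nets in $M$ and $N$; conversely, a basic entourage on the product is the intersection of the pullbacks of basic entourages on the factors, so a pair of Cauchy nets assembles to a Cauchy net on the product. If $M$ and $N$ are complete, the projected nets converge, say to $m_\infty$ and $n_\infty$, and since convergence in the product topology is tested coordinatewise, the original net converges to $(m_\infty,n_\infty)$.

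There is no serious obstacle here; the subtlest point is really the coproduct property, which in the category of all topological spaces fails for products but holds here precisely because a topological module has jointly continuous addition, letting us combine $f$ and $g$ into a single continuous $h$. Everything else is a direct application of the universal property of product topologies and of coordinatewise testing of Cauchy and convergence conditions.
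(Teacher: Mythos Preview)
Your proposal is correct and follows essentially the same route as the paper. The paper dismisses the topological-module verification as ``an unenlightening exercise in computation,'' handles the product property as formal, and for the coproduct composes the projections with $f_M,f_N$ and then adds---precisely your factorization $M\times N\xrightarrow{f\times g}Q\times Q\xrightarrow{+}Q$ said in different words. The one genuine difference is in the completeness argument: the paper works with Cauchy \emph{sequences}, while you correctly work with Cauchy nets in the translation-invariant uniform structure. Your version is the more robust one, since general topological modules need not be first countable and sequential completeness is in principle weaker; the paper's applications happen to be in metrizable settings where sequences suffice, but your formulation is what the lemma as stated actually requires.
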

\begin{proof} The fact that $M\times N$ is a topological module follows from an unenlightening exercise in computation. Seeing as $M\times N$ is defined to have both the product module and product topological structure, it clearly satisfies the axioms of a categorical product. Hence, we are left with showing that it satisfies the coproduct axioms as well.

To do this, we take as input a topological module $X$ and two continuous morphisms $f_N:N\xrightarrow{} X$, $f_M: M\xrightarrow{} X$. We wish to construct a map $f: N\times M\xrightarrow{} X$ which acts like $f_N,f_M$ when restricted correctly. To do this, we simply consider both $f_N$ and $f_M$ as morphisms $N\times M\xrightarrow{} X$ by first factoring through a restriction $N\times M \twoheadrightarrow{} M,N $. Taking the sum of $f_N$ and $f_M$ we get the desired continuous map.

We now prove that the product of complete modules is complete. Suppose $(z_n)_{n\geq 0}$ is a Cauchy sequence in $M\times N$. First, we write $z_n=(x_n,y_n)$. A Cauchy sequence in $M\times N$ clearly must be Cauchy in each component, and hence $x_n$ and $y_n$ are both Cauchy. By hypothesis they have limit points $x_{\infty}$ and $y_{\infty}$ respectively, so we can set $z_{\infty}=(x_{\infty},y_{\infty})$. This is clearly a limit point for $(z_n)_{n\geq 0}$, so we are done.
\end{proof}

We now can get to proving results about product topologies on finitely generated modules, when we remember that $\pi$ is a distinguished uniformizing element for $R$:

\begin{lemma}\label{compact complete} All product topologies on finitely generated modules induce compact complete topological structures.
\end{lemma}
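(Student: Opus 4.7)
The plan is to reduce to the case of cyclic factors via the structure theorem for finitely generated modules over the PID $R$, and then verify compactness and completeness factorwise, finally invoking the fact that both properties are preserved by finite products.

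Concretely, I would begin by writing $M \cong \prod_{i=1}^k R/(x_i)$ where each $x_i$ is either $0$ or a power of the uniformizer $\pi$; by the preamble of the section, a product topology on $M$ arises from such a decomposition by placing the metric topology on each factor with $x_i = 0$ and the discrete topology on each factor with $x_i = \pi^{n_i}$, $n_i \geq 1$. For compactness, $R$ itself is compact by hypothesis, and each $R/\pi^n R$ is finite (since $R/\mm$ is a finite residue field and the successive quotients $\pi^j R/\pi^{j+1} R$ in the natural filtration are all copies of $R/\mm$), hence trivially compact in its discrete topology. A finite product of compact Hausdorff spaces is compact, so the product topology on $M$ is compact.

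For completeness, $R$ is complete by virtue of being a compact DVR, while a finite discrete module is complete since every Cauchy sequence in a discrete space is eventually constant. Lemma \ref{biproduct} hands us completeness of the product of two complete modules, and a short induction on the number of factors extends this to our finite product, finishing the proof.

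There isn't really a main obstacle here: after the structure theorem reduction, everything follows from elementary facts about finite products of compact and complete spaces. The only subtlety worth flagging is that the lemma permits several a priori different product topologies, coming from different decomposition choices of $M$; the argument sketched above makes no claim that they agree, which is a separate matter presumably addressed in the subsequent results of this section.
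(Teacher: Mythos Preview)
Your reduction to cyclic factors and your verification of compactness and completeness match the paper's approach. However, you have skipped one step that the paper treats with care: you never check that the cyclic factors, and hence their product, are \emph{topological modules} in the first place. The phrase ``compact complete topological structures'' in the lemma means compact complete objects of $\HM_R$, and Lemma~\ref{biproduct}, which you invoke, takes two topological modules as input. For $R$ with its metric topology this is immediate (it is a topological ring), but for $R/(\pi^n)$ with the discrete topology it is not: the scalar multiplication map $R\times R/(\pi^n)\to R/(\pi^n)$ has a non-discrete source, so its continuity is not automatic from the discreteness of the target. This is exactly the point the introduction flags when it says that giving $M$ the discrete topology need not produce a topological module.

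The paper fills this gap by computing the preimage of $0$ under scalar multiplication as $\bigcup_{x\in R/(\pi^n)} (\pi^{k_x})\times\{x\}$, where $k_x$ is the least $k$ with $\pi^k x=0$; since each $(\pi^{k_x})$ is open in $R$, this union is open, and translating by addition handles the remaining singletons. Once that is in place, your argument goes through and is essentially identical to the paper's.
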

\begin{proof} This is clearly true when the module under consideration is $R$, since the metric space topology on $R$ makes it a topological ring. When the module is $R/(\pi^n)$ for some $n\geq 1$, addition and inversion are continuous since the space is discrete. To show scalar multiplication $R\times R/(\pi^n)\xrightarrow{} R/(\pi^n)$ is continuous, we compute the preimage of $0$ to be

$$\bigcup_{x\in R/(\pi^n)} (\pi^{k_x})\times\{x\},$$

where $k_x$ is the smallest integer such that $\pi^k x=0$. Since $(\pi^{k_x})$ is open in $R$ and $x$ is open in $R/(\pi^n)$, the union of these sets is open in $R\times R/(\pi^n)$. Translating using addition we get that the preimage of any singleton in $R/(\pi^n)$ is open, and since singletons form a basis for the topology of $R/(\pi^n)$ we are done with the proof that scalar multiplication is continuous. The last condition needed for $R/(\pi^n)$ to be a topological module is for it to be Hausdorff, which is clear.

Both $R$ and $R/(\pi^n)$ are compact complete topological modules, so by Lemma \ref{biproduct} all product topologies on fin gen modules give compact complete topological structures.
\end{proof}

We now work towards our proof that all topological structures on fin gen modules are equivalent, by showing that there is a unique topological structure on $R$. The proof strongly uses the fact that $R$ is a compact metric space. In fact, it already fails for the case of $\ZZ$-modules. As an example, one can take the set of all arithmetic sequences as a basis of $\ZZ$: This topology is non-canonical (i.e., not discrete), metrizable, and is used in \cite{furstenberg2009infinitude} to deduce the infinitude of primes.

\begin{lemma}\label{R top unique} Every topological $R$-module $M$ algebraically isomorphic to $R$ must have the metric space topology.
\end{lemma}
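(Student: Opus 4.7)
The strategy is to use the topological module axioms to produce a continuous algebraic isomorphism from $R$ (with its metric topology) to $M$, and then promote it to a homeomorphism by a compactness argument. Fix any algebraic isomorphism $\phi \colon R \to M$ and set $m_0 = \phi(1)$. Since $\phi$ is $R$-linear, $\phi(r) = r \cdot m_0$ for every $r \in R$, so $\phi$ factors as the composition of the continuous map $R \to R \times M$, $r \mapsto (r, m_0)$, with scalar multiplication $R \times M \to M$, which is continuous by the very definition of a topological $R$-module. Hence $\phi$ is continuous when $R$ carries its metric topology.

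Now Lemma \ref{compact complete} tells us that $R$ with its metric topology is compact, and $M$ is Hausdorff by the standing convention on topological modules. A continuous bijection from a compact space onto a Hausdorff space is automatically closed, and therefore a homeomorphism. Thus $\phi$ transports the metric topology on $R$ to the given topology on $M$, so $M$ carries the metric topology. Independence of this topology from the choice of $\phi$ follows at once from the fact that any two such isomorphisms differ by an automorphism of $R$, necessarily multiplication by a unit, which is a self-homeomorphism of $R$ in its metric topology.

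There is no genuine obstacle in this argument; the two ingredients are simply the continuity of scalar multiplication and the compactness of $R$ (already established in Lemma \ref{compact complete}). As the paragraph preceding the lemma statement remarks, the analogous statement fails for $\ZZ$-modules precisely because $\ZZ$ is not compact in its usual topology, so compactness of $R$ is doing the essential work.
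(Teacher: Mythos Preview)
Your proof is correct and shares the same opening move as the paper: both establish continuity of the map $R\to M$, $r\mapsto r\cdot m_0$, via the factorization through scalar multiplication. The divergence is in the second half. The paper argues by hand that every $M$-convergent sequence is already $R$-convergent to the same limit, using sequential compactness of $R$ to extract a convergent subsequence and Hausdorffness of $M$ to force agreement of limits. You instead invoke the standard fact that a continuous bijection from a compact space to a Hausdorff space is automatically a homeomorphism.

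Your route is shorter and arguably cleaner: it sidesteps the question of whether sequential continuity of $M\to R$ suffices for continuity (which would require knowing $M$ is a sequential space, something the paper does not explicitly justify). The paper's argument, on the other hand, is more self-contained and makes the role of compactness visible at the level of sequences. Either way, the essential content is the same---compactness of $R$ together with Hausdorffness of $M$ forces the continuous bijection to be a homeomorphism---and your final remark correctly identifies compactness of $R$ as the ingredient that separates this situation from the $\ZZ$-module case.
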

\begin{proof} Let $M\in\HM_R$ be algebraically isomorphic to $R$. We wish to show $M=R$ topologically as well. To begin we note composition

\begin{align*}
R\xrightarrow{}R&\times M\xrightarrow{} M\\
r\mapsto (&r,1) \mapsto r\cdot 1
\end{align*}

is continuous, so $R$ is finer than $M$. It remains to show that $M$ is finer than $R$, or equivalently that every convergent sequence $(x_n)_{n\geq 0}$ in $M$ is also convergent in $R$ and has the same limit. Assume for the sake of contradiction that the sequence did not $R$-converge to $x$. Then since the topology on $R$ is metrizable there exists an $\epsilon>0$ and a subsequence $(y_n)_{n\geq 0}$ of $(x_n)_{n\geq0}$ such that $|y_n-x|>\epsilon$ uniformly. By compactness there is an $R$-convergent subsequence $(z_n)_{n\geq0}$ of $(y_n)_{n\geq0}$. Denoting the limit point by $z_{\infty}$, we see that $(z_n,1)_{n\geq 0}$ converging to $(z_{\infty},1)$ in $R\times M$ implies $z_n\cdot 1=z_n$ converges to $z\cdot 1=z$ in $M$ by the continuity of multiplication. Thus, since $(z_n)_{n\geq 0}$ converges in $M$ to $x$ and $M$ is Hausdorff, we must have that $z=x$. Clearly our definition of $(y_n)_{n\geq0}$ implies it cannot have any subsequences converging to $x$, hence we have a contradiction, so our proof is complete.
\end{proof}

We are now ready to prove Theorem \ref{fin gen uniqueness}:

\begin{proof}[Proof of Theorem \ref{fin gen uniqueness}] This is true for modules of the form $R/(\pi^n)$ since the only Hausdorff topology on a finite space is discrete, and it is true for $R$ by Lemma \ref{R top unique}. The key step is to show that if $X\cong N\times M\in \HM_R$ is a fin gen topological module, then $X=N\times M$ has the product topology with $N=N\times\{0\}$ and $M=\{0\}\times M$ given subspace topologies.

We first elaborate on why the above statement is enough to deduce the proposition. If we choose an algebraic isomorphism $X\xrightarrow{\sim} \prod R/(x)$, then decomposing $X$ one term at a time and using the above statement we find that the topology on $X$ is equal to the product topology arising from this isomorphism. Hence, all product topologies are equivalent, and every fin gen module has the product topology.

To prove this result, we go by induction on the number of generators of $X$. Suppose $X\cong N\times M$, $N,M\neq 0$. By our hypothesis, $N$ and $M$ both have their unique topologies. By Lemma \ref{compact complete} $N$ and $M$ are both complete, and hence they are closed in $X$. It is a standard result that taking quotients by closed subgroups results in Hausdorff spaces (see e.g., \cite{hewitt2012abstract} Theorem 5.21). Hence the quotient topologies on $N=X/(\{0\}\times M)$ and $M=X/(N\times\{0\})$ are topological module structures, and so by the uniqueness of structures on $M$ and $N$ we find that the quotient topologies and subspace topologies agree. In particular the surjections  $X\twoheadrightarrow N,M$ serve as $\HM_R$-sections to the injections $N,M \xhookrightarrow{} X$, and hence the top and bottom rows in the diagram

\[
\begin{tikzcd}
0 \arrow[r] & N \arrow[d, "\sim"] \arrow[r] & X \arrow[d] \arrow[r] & M \arrow[d, "\sim"] \arrow[r] & 0 \\
0 \arrow[r] & N \arrow[r,] & N\times M \arrow[r] & M \ar[r] & 0
\end{tikzcd}
\]

are both split exact. The middle arrow of this diagram comes from the universal property of the product, which is valid by Lemma \ref{biproduct}. Seeing as all these maps are the identity on sets we conclude the diagram is commutative. The short split 5-lemma is the statement in a given category that whenever there is a split exact sequence of this form then the middle arrow is an isomorphism. Theorem 50 of \cite{borceux2005topological} shows that the short split 5-lemma holds in the category of topological modules over any ring, hence $X=N\times  M$ and our proof is complete.

\end{proof}

We thus can now refer to the canonical topology on fin gen modules. For general modules, it is clear that passing to the direct limit $M=\varinjlim_{S\leq M}S$ with $S$ running over fin gen submodules with canonical topology does indeed give a topology. A key point used is that all of the connecting maps $S_0\hookrightarrow S_1$ are continuous, which is true since the subspace topology of $S_0\leq S_1$ always induces a topological module structure (additional/scalar multiplication/inversion are still continuous), hence it must agree with the canonical topology on $S_0$ by Theorem \ref{fin gen uniqueness}. It is still not clearly a topological module structure however, and so we prove this fact after a necessary proposition:

\begin{proposition}\label{Lemma2} If $M$ is in canonical form then it is Hausdorff regular, and all submodules of $M$ are closed and canonical when given subspace topology.
\end{proposition}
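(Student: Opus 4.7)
My plan is to split the proposition into three claims and address them in order: (a) every submodule $N\leq M$ is closed in $M$, (b) the subspace topology on $N$ from $M$ coincides with the canonical topology of $N$, and (c) $M$ is Hausdorff regular. Parts (a) and (b) are colimit calculations, while (c) will be the main obstacle.

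For (a), I would take any fin gen $S\leq M$ and note that $N\cap S$, being a submodule of the fin gen module $S$ over the Noetherian ring $R$, is itself fin gen. By Theorem \ref{fin gen uniqueness} and Lemma \ref{compact complete} it carries a unique, compact topological module structure, making it a closed subset of the Hausdorff space $S$; since this holds for every fin gen $S$, the definition of the colimit topology forces $N$ closed in $M$. For (b), the canonical topology $\kappa_N$ on $N$ is at least as fine as the subspace topology $\tau_N$ because the inclusion of any fin gen $T\leq N$ factors continuously as $T\hookrightarrow N\hookrightarrow M$. For the reverse, given $W$ open in $\kappa_N$, I would verify that $U:=W\cup (M\setminus N)$ is open in $M$ by computing, for each fin gen $S\leq M$,
\[
U\cap S \;=\; (W\cap (N\cap S))\cup (S\setminus (N\cap S)).
\]
Part (a) ensures $S\setminus (N\cap S)$ is open in $S$, and Theorem \ref{fin gen uniqueness} applied to the fin gen $N\cap S$ shows that its subspace topology from $S$ agrees with its canonical topology; hence $W\cap (N\cap S)$ extends to some $W''$ open in $S$, and a small set-theoretic rearrangement gives $U\cap S = W''\cup (S\setminus (N\cap S))$. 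This is a union of opens in $S$, so $U$ is open in $M$ with $U\cap N=W$, giving $W\in\tau_N$.

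For (c), the main obstacle, I would first establish that translation $t_m:M\to M$, $x\mapsto x+m$, is a homeomorphism for every $m\in M$, by rewriting $(t_m^{-1}(U))\cap S$ in terms of the continuous translation inside the fin gen topological module $S+Rm$. By this homogeneity, regularity reduces to regularity at $0$. For a closed $C$ with $0\notin C$, I would exploit that each fin gen submodule of $M$ is profinite---compact Hausdorff totally disconnected, following from Theorem \ref{fin gen uniqueness} together with total disconnectedness of $R$ and its principal quotients $R/(\pi^n)$---so that $0$ admits clopen submodule neighborhoods locally separating it from $C\cap S$ inside every fin gen $S$. The subtle step, which I expect to be the crux, is aggregating these local separations into globally disjoint opens in $M$: the naive extensions $W\cup (M\setminus N)$ from (b) always overlap on $M\setminus N$, so the separating opens must be produced compatibly across the directed system of fin gen submodules, presumably by a cylindrical construction of a neighborhood of $0$ analogous to those available in direct sums. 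Hausdorff then follows from regularity plus the $T_1$ property, which is immediate since singletons are translates of the closed submodule $\{0\}$.
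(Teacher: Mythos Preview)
Your treatments of (a) and (b) are correct and run parallel to the paper's. For (a) the paper likewise shows $N\cap S$ is closed in each fin gen $S$ (it invokes completeness from Lemma~\ref{compact complete} rather than compactness, but either suffices in a Hausdorff ambient). For (b) the paper argues with closed sets instead of open sets: if $W\subseteq N$ is closed in the canonical topology on $N$, then for every fin gen $S\leq M$ one has $W\cap S=W\cap(N\cap S)$ closed in $N\cap S$, hence closed in $S$, so $W$ is already closed in $M$ and one may take $X=W$. Your open-set version with $U=W\cup(M\setminus N)$ is the complementary form of the same computation.

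The gap is (c). You have correctly isolated the difficulty---patching the local separations available in each fin gen $S$ into globally disjoint open sets in $M$---but you have not carried it out; ``presumably by a cylindrical construction \ldots\ analogous to those available in direct sums'' is a hope, not an argument. This step is genuine content: colimits of Hausdorff spaces need not be Hausdorff, and the coherence you need across the directed system does not fall out automatically from the profinite structure of the individual pieces. The paper takes an entirely different route here: it does not attempt any direct construction but instead invokes the general theory of $k_\omega$-spaces (colimits of compact Hausdorff subspaces along closed embeddings), citing \cite{graev1948free} and the survey \cite{franklin1977survey}, which deliver Hausdorff regularity as a black box. If you want to avoid that citation you must actually produce the compatible family of separating (cl)open neighborhoods---for instance by a transfinite refinement along the directed system of fin gen submodules---and that is a nontrivial lemma in its own right, not a remark.
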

\begin{proof} To prove that $M$ is Hausdorff, we appeal to the general theory of sets given the direct limit topology on an ascending sequence of compact Hausdorff subspaces. They are known as $k_{\omega}$-spaces and enjoy many nice properties, one of which being that they are all Hausdorff regular. This was originally proved in the Russian language paper \cite{graev1948free}, and subsequently exposited in a more modern fashion by \cite{franklin1977survey}.

Now, let $N\leq M$ be a submodule. To show $N$ is closed it suffices to show that $N \cap S $ is closed in $S$ for all fin gen submodules $S\leq M$. This is clear, however, since $N\cap S$ is complete by Lemma \ref{compact complete} and thus is closed since it contains all of its limit points.

We now show that $N$ must be canonical, by showing that a set $W\subseteq N$ is closed in the subspace topology if and only if it is closed in the canonical topology. If $W$ is closed in the subspace topology then $W=X\cap N$ for some closed $X\subseteq M$, and hence

$$W\cap S=(X\cap N)\cap S=X\cap S=\left(\mathrm{closed}\right)$$

for all $S\leq N$, and hence $W$ is closed in the canonical topology. Conversely, if $W$ is closed in the canonical topology then we must show that $W=X\cap N$ for some closed set $X\subseteq M$. This is clear, however, since $W$ is closed in $N$ which is closed in $M$, and hence $W$ is closed in $M$, so we can take $X=W$. Thus, we conclude the result.
\end{proof}

\begin{proposition} The canonical topology always induces an $\HM_R$-structure on modules.
\end{proposition}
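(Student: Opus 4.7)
The Hausdorff condition is already provided by Proposition \ref{Lemma2}, so what remains is continuity of inversion, addition, and scalar multiplication. Inversion is nearly immediate: negation preserves every finitely generated submodule $S$ and is continuous on $S$ by Theorem \ref{fin gen uniqueness}. Hence for any open $U\subseteq M$ we have $(-U)\cap S = -(U\cap S)$ open in $S$ for every fin gen $S$, which makes $-U$ open in $M$ by definition of the direct limit topology.

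For addition and scalar multiplication, the plan is to reduce both to a single \emph{open submodule lemma}: every open neighborhood of $0$ in $M$ contains an open submodule of $M$. Given the lemma, continuity of addition at $(m_1,m_2)$ is immediate---pick an open submodule $N\subseteq U-m_1-m_2$ for $U\ni m_1+m_2$ open; then $(m_1+N)+(m_2+N) = m_1+m_2+N\subseteq U$ since $N+N=N$. Continuity of scalar multiplication at $(r_0,m_0)$ follows from the bilinear expansion $(r_0+v)(m_0+w)-r_0 m_0 = vm_0+r_0 w+vw$ by choosing an open submodule $N\subseteq U-r_0 m_0$ and setting $V := r_0+\pi^k R$, $W := m_0+N$ for $k$ large enough that $\pi^k m_0\in N$. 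The existence of such $k$ comes from the continuous inclusion $Rm_0\hookrightarrow M$, which forces $\pi^k m_0\to 0$; each of the three summands $vm_0$, $r_0w$, $vw$ then lies in the submodule $N$.

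The main obstacle is the open submodule lemma itself. For each fin gen $S$, the sets $\{\pi^k S\}_{k\geq 0}$ form a basis of open submodules at $0$ (by the explicit description of the unique topology from Theorem \ref{fin gen uniqueness}), so $U\cap S\supseteq\pi^{k(S)}S$ for some $k(S)$. The challenge is to glue these local choices into a single open submodule $N\leq M$ contained in $U$. My plan is a Zorn's lemma argument on coherent families $\{N_S\}$ of open submodules satisfying $N_S\subseteq U\cap S$ and $S\subseteq S' \Rightarrow N_S\subseteq N_{S'}$. For the extension step introducing a new fin gen $T$, I would set $N_T := M^{\mathrm{low}}_T + \pi^k T$, where $M^{\mathrm{low}}_T$ is the sum of previously chosen $N_S$ with $S\subseteq T$; this sum is closed in the compact $T$ by Proposition \ref{Lemma2} and Lemma \ref{compact complete}, so a finite-cover argument yields $k$ with $N_T\subseteq U\cap T$. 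The resulting $N := \bigcup_S N_S$ is a directed union of submodules, contained in $U$, and open because $N\cap S\supseteq N_S$ is an open submodule of $S$ for every fin gen $S$; the delicate piece is verifying that the Zorn extension can be carried out while simultaneously respecting any upper bound forced by previously chosen $N_{S'}$ with $T\subseteq S'$.
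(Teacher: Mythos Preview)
Your reduction of addition and scalar multiplication to an ``open submodule lemma'' is a genuinely different route from the paper's, but the Zorn argument you sketch for that lemma has a real gap beyond the ``delicate piece'' you already flag. You need $M^{\mathrm{low}}_T\subseteq U$ before any finite-cover argument can produce $k$ with $M^{\mathrm{low}}_T+\pi^kT\subseteq U\cap T$: if some $m\in M^{\mathrm{low}}_T$ lies outside $U$ then $m=m+0\in M^{\mathrm{low}}_T+\pi^kT$ for every $k$. But $M^{\mathrm{low}}_T$ is a \emph{sum} of the various $N_S\subseteq U$, and $U$ is not a submodule, so there is no reason a sum $n_{S_1}+\cdots+n_{S_r}$ with $n_{S_i}\in N_{S_i}$ remains in $U$. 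This would follow if the index set of the family were directed (then all the $N_{S_i}$ sit inside a common $N_{S'}\subseteq U$), but nothing in your Zorn poset enforces directedness of a maximal element. The upward-compatibility problem you mention is equally real: if the maximal family already contains infinitely many $S'\supsetneq T$, then any admissible $N_T$ must lie in $\bigcap_{S'}(N_{S'}\cap T)$, and an infinite intersection of open submodules of the compact $T$ need not be open. Both issues can be cured by restricting the Zorn poset to families indexed by \emph{downward-closed directed} collections of fin gen submodules and extending by adjoining $T$ together with all $S\subseteq T$ at once, but that is a substantial rewrite of the argument, not a detail.

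The paper avoids the open-submodule lemma entirely. For addition it checks directly that $f^{-1}(U)\cap S$ is open for every fin gen $S\leq M\times M$: one embeds $S$ in $S'\times S'$ where $S'\leq M$ is the fin gen submodule generated by the two coordinate projections of $S$, and then continuity of addition on the canonical $S'$ (known from Theorem~\ref{fin gen uniqueness} and Lemma~\ref{compact complete}) finishes the job. Scalar multiplication is handled identically with $S\leq R\times M$ embedded in $R\times S'$. This is shorter, needs no transfinite construction, and makes no structural claim about a basis of open submodules --- though your lemma, if carried through, would give that extra information.
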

\begin{proof} Write $M=\varinjlim_{S\leq M} S$ with $S$ running over fin gen submodules. To show that inversion $f: M\xrightarrow{} M$ is continuous, we compute that

\begin{align*}
\left(f\,\, \mathrm{continuous}\right) &\iff \left(f^{-1}(U)\cap S \,\, \mathrm{open}\,\, \forall\, S\leq M,\, U\subseteq M\, \mathrm{open}\right)\\
&\iff \left(\left.f\right|_S: S\xrightarrow{} M \,\, \mathrm{continuous}\,\, \forall S\leq M\right).
\end{align*}

Now, $\left.f\right|_S$ is exactly the inversion map on $S$ and hence since $S$ with subspace topology is canonical by Proposition \ref{Lemma2} the inversion map on $S$ must be continuous by the previously treated case.

We now show that the addition map $f:M\times M\xrightarrow{} M$ is continuous. Choose $U\subseteq M$ open. We wish to show that $f^{-1}(U)\cap S$ is open in $S$ for all $S\leq M\times M$ finitely generated. Denoting $p_0,p_1:M\times M\xrightarrow{}M$ to be the projection maps onto each component, we let $S'$ be the submodule of $M$ generated by $p_0(S)$ and $p_1(S)$. Since $p_0(S)$ and $p_1(S)$ are fin gen, so is $S'$. Restricting the addition map on $M$ to $S'\times S'$, we obtain the addition map on $S'$. By Proposition \ref{Lemma2}, $S'$ is canonical when given subspace topology and hence the addition map is continuous by the previously treated case. Since $S\leq S'\times S'$, the set $f^{-1}(U)\cap \left(S'\times S'\right)$ being open in $S'\times S'$ implies $f^{-1}(U)\cap S$ is open in $S$ and so we are done.

We now show that the scalar multiplication map $f: R\times M\xrightarrow{} M$ is continuous. Again, this reduces to showing that $f^{-1}(U)\cap S$ is open for all $U\subseteq M$ open and all $S\leq R\times M$ finitely generated. Denoting $p: R\times M\xrightarrow{} M$ to be the canonical projection, we let $S'=p(S)$. Restricting the multiplication map to $R\times S'$ we obtain the scalar multiplication map on $S'$. By Proposition \ref{Lemma2}, $S'$ is canonical when given subspace topology and hence the addition map is continuous by the previously treated case. Since $S\leq R\times S'$, the set $f^{-1}(U)\cap \left(R\times S'\right)$ being open in $R\times S'$ implies $f^{-1}(U)\cap S$ is open in $S$ and so we are done.
\end{proof}

As a corollary, we deduce the (1)-(2) equivalence of Theorem \ref{Equivalences theorem}:

\begin{corollary}\label{Lemma3} If $M\in\HM_R$ and $M_0$ is the module algebraically equal to $M$ but given the canonical topology, then the identity map $M_0\xrightarrow{} M$ is continuous (i.e., $M_0$ is finer than $M$).
\end{corollary}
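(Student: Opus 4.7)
The plan is to invoke the universal property of the direct limit topology on $M_0$ together with the uniqueness of topological module structures on finitely generated modules (Theorem \ref{fin gen uniqueness}).

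By definition $M_0 = \varinjlim_{S\leq M} S$ where $S$ ranges over finitely generated submodules equipped with their (unique) canonical topology. The universal property of the direct limit topology says that a map out of $M_0$ is continuous if and only if its restriction to each finitely generated submodule $S$ is continuous. Therefore, to show that the identity map $\mathrm{id}\colon M_0 \to M$ is continuous, it suffices to show that for every finitely generated submodule $S\leq M$, the inclusion $\iota_S\colon S_{\mathrm{can}}\hookrightarrow M$ is continuous, where $S_{\mathrm{can}}$ denotes $S$ with its canonical topology.

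For this, I would equip $S$ with the subspace topology inherited from $M$ and observe that this topology makes $S$ into a topological $R$-module: addition, inversion, and scalar multiplication on $S$ are simply the restrictions of the corresponding continuous operations on $M$, and subspaces of Hausdorff spaces are Hausdorff. Thus $S$ with the subspace topology is a finitely generated topological $R$-module, so by Theorem \ref{fin gen uniqueness} its topology must coincide with the canonical topology on $S$. In particular, the subspace topology on $S$ equals the topology of $S_{\mathrm{can}}$, which by definition makes the inclusion $\iota_S\colon S_{\mathrm{can}}\hookrightarrow M$ continuous (a subspace inclusion is always continuous).

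The only mild subtlety is the verification that the subspace topology on $S$ is a topological module structure in the sense required by the paper, but this is immediate from continuity of the ambient operations and the fact that Hausdorffness is inherited, so there is no real obstacle; the corollary then follows formally from the two cited results.
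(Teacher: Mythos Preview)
Your proof is correct and follows essentially the same approach as the paper: both arguments use the universal property of the direct limit together with Theorem \ref{fin gen uniqueness} to identify the canonical topology on each finitely generated $S$ with the subspace topology inherited from $M$, thereby obtaining continuity of the identity map $M_0\to M$. Your version is slightly more explicit in verifying that the subspace topology on $S$ is indeed a topological module structure, but otherwise the two proofs are the same.
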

\begin{proof} Letting $S$ run over the fin gen submodules of $M$ with subspace topology, the universal property of the direct limit gives a continuous identity map $\varinjlim_{S\leq M}S \xrightarrow{} M$. By uniqueness each $S$ must have canonical topology, and hence $\varinjlim_{S\leq M}S=M_0$ so we are done.
\end{proof}

\section{The Double Dual Map}\label{The Double Dual Map}

In this section we perform computations on $R$-module morphisms between finitely generated modules. To begin, we show that every morphism between canonical modules is continuous:

\begin{proof}[Proof of Theorem \ref{fully faithful}] Every morphism leaving a quotient $R/(x)$ for $x\neq 0$ is clearly continuous, since $R/(x)$ is discrete. Let $f: R\xrightarrow{} M$ be a morphism between $R$ and a module of canonical form. We show $f$ is continuous. To begin, we factor $f$ through its kernel as

$$f:R\twoheadrightarrow{} R/(x)\hookrightarrow{} M.$$

By Proposition \ref{Lemma2}, subspaces of canonical modules are canonical, so $R/(x)$ is canonical. As a first case, we set $x\neq 0$ so $R/(x)$ is discrete. The preimage of $0$ in $R\twoheadrightarrow{} R/(x)$ is $(x)$. Seeing as $(x)$ is open in $R$ for any choice of $(x)$, we get that $R\twoheadrightarrow{} R/(x)$ is continuous. For a second case, we set $x= 0$ so $R/(x)=R$. The only injective $R$-module morphisms $R\xrightarrow{}R$ are multiplications by nonzero elements, which are clearly all continuous. Thus, since the inclusion $R/(x)\hookrightarrow{} M$ is continuous by the definition of the subspace topology, we get that $f$ is continuous.

We now show that all maps leaving fin gen modules are continuous. We choose $S$ fin gen canonical, $M$ canonical, and a morphism $f: S\xrightarrow{} M$. We write $S$ as a product $\prod R/(x)$. By the coproduct property of Lemma \ref{biproduct}, giving a continuous map $S\xrightarrow{}M$ is the same as giving a continuous map leaving $R/(x)$ for each $R/(x)$. Since all maps leaving $R/(x)$ are continuous by the previous computations, we get that all maps leaving $S$ are thus continuous so we are done.

We now do the case of general modules. Choose a morphism $f:M\xrightarrow{} N$, both modules canonical. The map $f$ is continuous if and only if $f^{-1}(U)$ is open for all $U\subseteq N$ open, which in turn is true if and only if $f^{-1}(U)\cap S$ is open for all $S\leq M$ fin gen. This is clearly equivalent to the restrictions $\left.f \right|_{S}$ being continuous for each fin gen module. Seeing as $\left.f \right|_{S}$ is a morphism leaving a fin gen module, it must be continuous for each $S$. Thus, $f$ is continuous so we are done.
\end{proof}

We will be dealing a lot with the quotients $R/(x)$ of $R$ by nonzero elements, and so we fix the following notation. When $x,y\in R$ are such that $x$ divides $y$, we write $\inf: R/(x)\hookrightarrow R/(y)$ for the injection sending $r\in R/(x)$ to $r\cdot (y/x)$ in $R/(y)$. Additionally, we write $\res: R/(y) \twoheadrightarrow R/(x)$ to denote taking $r\in R/(y)$ to the well-defined coset in $R/(x)$ associated to any $R$-representative.

\begin{proposition}\label{First Full R Lemito} For all nonzero $x\in R$, there exists a canonical $\HM_R$-isomorphism $\widehat{R/(x)}\xrightarrow{i_x} R/(x)$ such that for all $x,y\in R$ with $x$ dividing $y$ we have a commutative diagram

\[
\begin{tikzcd}
\widehat{R/(y)} \arrow[d,"\inf^{\wedge}"]\arrow[r,"i_y"] & R/(y)\arrow[d,"\res"]\\
\widehat{R/(x)}\arrow[r,"i_x"] & R/(x).\\
\end{tikzcd}
\]
\end{proposition}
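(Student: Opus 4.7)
The plan is to make the duality identification concrete via the $x$-torsion of $T$. Since $R$ has finite residue field and $x\neq 0$ lies in some $\mm^n$, the module $R/(x)$ is finite, hence discrete by Theorem \ref{fin gen uniqueness}; in particular every $R$-module homomorphism $\varphi:R/(x)\to T$ is automatically continuous. Such a $\varphi$ is determined by $\varphi(1)\in T$, which must satisfy $x\varphi(1)=0$ in $T=K/R$, i.e., $\varphi(1)\in (x^{-1}R)/R\subset T$. The assignment $a+(x)\mapsto (a/x)+R$ defines a canonical $R$-module isomorphism $R/(x)\xrightarrow{\sim}(x^{-1}R)/R$, so I would define $i_x$ by lifting $\varphi(1)$ to some $a/x\in K$ with $a\in R$ well-defined modulo $(x)$, and setting $i_x(\varphi)=a+(x)$. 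This involves no choice beyond $x$ itself, establishing canonicity.

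With $i_x$ in hand, algebraic bijectivity is immediate from the construction. To promote this to an $\HM_R$-isomorphism I would observe that both sides carry the discrete topology: $R/(x)$ is discrete by Theorem \ref{fin gen uniqueness}, and $\widehat{R/(x)}$ is discrete because $T$ is discrete by Lemma \ref{T discrete topology} and $R/(x)$ is finite, so for any $\varphi$ the singleton $\{\varphi\}$ equals the finite intersection $\bigcap_{r\in R/(x)} V(\{r\};\{\varphi(r)\})$ of compact-open subbasis elements. A bijection between discrete spaces is automatically a homeomorphism.

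For the commutativity of the diagram I would just chase a representative. Write $y=xz$ with $z\in R$ and let $\varphi\in\widehat{R/(y)}$ with $\varphi(1)=a/y+R$, so that $i_y(\varphi)=a+(y)$ and $\res(i_y(\varphi))=a+(x)$. Going the other way, $(\inf^{\wedge}\varphi)(1)=\varphi(\inf(1))=\varphi(z)=z\varphi(1)=(az)/y+R=a/x+R$, whence $i_x(\inf^{\wedge}\varphi)=a+(x)$, matching the first computation.

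The only genuinely subtle point — and the one I would treat most carefully — is well-definedness of the lift $a$ modulo $(x)$, which is the only place the structure of $R$ as a DVR enters substantively. This reduces to the kernel computation for the map $R\to T$, $r\mapsto r/x+R$: if $(a-a')/x\in R$ then $a-a'\in(x)$. Once this is in place the rest of the argument is essentially bookkeeping, since the algebraic structure of $\varphi\mapsto\varphi(1)$ carries all the naturality required by the diagram.
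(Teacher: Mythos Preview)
Your proof is correct and follows essentially the same route as the paper: identify $\widehat{R/(x)}$ with the $x$-torsion of $T$ via $\varphi\mapsto\varphi(1)$, then with $R/(x)$ via $a/x\mapsto a$, and chase the diagram exactly as you do. The only cosmetic differences are that the paper cites Theorem~\ref{fully faithful} for continuity rather than arguing directly from finiteness, and that you are more explicit than the paper about why $i_x$ is a homeomorphism (both sides discrete).
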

\begin{proof} Since $R/(x)$ is freely generated by the element $1$ and the condition that $x\cdot 1=0$, we have that $R$-module maps $\varphi: R/(x)\xrightarrow{} T$ are uniquely defined by $\varphi(1)$, and are well defined if and only if $x\cdot \varphi(1)=0$. All of the maps are continuous by Theorem \ref{fully faithful}, and hence we naturally have an isomorphism between $(R/(x))^{\wedge}$ and the $x$-torsion of $T$, sending $\varphi$ to $\varphi(1)$. An element $t\in T$ is $x$-torsion if and only if it can be written in the form $r_t/x$ where $r_t\in R$. Sending $t$ to $r_t$, which is well defined up to adding a multiple of $x$, we thus get an isomorphism between the $x$-torsion of $T$ and $R/(x)$. Hence, collecting, we have an isomorphism between $(R/(x))^{\wedge}$ and $R/(x)$, which we call $i_x$.

We now check that the desired square is commutative. Choose $\varphi\in (R/(y))^{\wedge}$. Mapping through $i_y$, we find that $\varphi$ sends to the unique $a\in R/(y)$ such that $\varphi(1)=a/y$. Mapping down, we send to the coset $a$ in $R/(x)$. Around the other direction, $\varphi$ maps to the $\tilde{\varphi}\in (R/(x))^{\wedge}$ defined by $\tilde{\varphi}(r)=\varphi((y/x)\cdot r)$. Thus,

$$\tilde{\varphi}(1)=\varphi\left((y/x)\cdot 1\right)=(y/x)\cdot \varphi(1)=(y/x)\cdot (a/y)=a/x,$$

and hence $\tilde{\varphi}$ maps down to $a$. We see that the result going around both directions is the same, so we are done.

\end{proof}

\begin{proposition}\label{Second Full R Lemito} The double dual map $R/(x)\xrightarrow{} \widehat{\widehat{R/(x)}}$ is an $\HM_R$-isomorphism.
\end{proposition}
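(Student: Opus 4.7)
The plan is to factor the double dual map $D: R/(x) \to \widehat{\widehat{R/(x)}}$ through the canonical identification of Proposition \ref{First Full R Lemito} (we focus on $x \neq 0$, the case to which Proposition \ref{First Full R Lemito} applies). Since $i_x: \widehat{R/(x)} \to R/(x)$ is an $\HM_R$-isomorphism, its pullback $i_x^{\wedge}: \widehat{R/(x)} \to \widehat{\widehat{R/(x)}}$ sending $\psi$ to $\psi \circ i_x$ is again an $\HM_R$-isomorphism by the contravariant functoriality of $(\cdot)^{\wedge}$, with inverse $(i_x^{-1})^{\wedge}$. Precomposing with $i_x^{-1}: R/(x) \to \widehat{R/(x)}$ yields an $\HM_R$-isomorphism $i_x^{\wedge} \circ i_x^{-1}: R/(x) \to \widehat{\widehat{R/(x)}}$, and it will remain only to check that this composition is precisely $D$.

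To verify $D = i_x^{\wedge} \circ i_x^{-1}$, I would trace through the explicit description of $i_x$ given in the proof of Proposition \ref{First Full R Lemito}. For $r \in R/(x)$, the preimage $i_x^{-1}(r) \in \widehat{R/(x)}$ is the unique homomorphism $\psi_r$ with $\psi_r(1) = r/x$ in $T$. For any $\varphi \in \widehat{R/(x)}$, writing $\varphi(1) = s/x$ so that $i_x(\varphi) = s$, a short computation gives $(i_x^{\wedge}(\psi_r))(\varphi) = \psi_r(s) = s \cdot (r/x) = rs/x$, matching $D(r)(\varphi) = \varphi(r) = r \cdot (s/x) = rs/x$. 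Thus the two maps agree, and $D$ inherits the $\HM_R$-isomorphism property from the composition.

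The main obstacle, such as it is, lies in justifying the contravariant functoriality of $(\cdot)^{\wedge}$ on $\HM_R$-isomorphisms: namely, that pullback along a continuous $R$-linear map into $T$ is itself continuous with respect to the compact-open topology, and preserves identities and composition. This is a standard property of the compact-open topology but has not been isolated as a lemma earlier in the excerpt, so it deserves a brief verification. Once in place, the trace computation above is essentially formal. As a sanity check via a second route, one could also observe that $R/(x)$ is finite (the residue field is finite and $x \neq 0$), hence both sides are finite discrete modules of the same cardinality by Proposition \ref{First Full R Lemito} applied twice, so injectivity of $D$ alone would suffice; injectivity is immediate from evaluating against the homomorphism $1 \mapsto 1/x$ in $T$.
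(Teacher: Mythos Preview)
Your proposal is correct, but the primary route you take differs from the paper's. You factor the double dual map as $i_x^{\wedge}\circ i_x^{-1}$ and verify this equality by an explicit trace computation, then conclude that $D$ is an $\HM_R$-isomorphism as a composition of two $\HM_R$-isomorphisms. The paper instead argues more briefly: it first shows injectivity of $D$ by evaluating against the character $1\mapsto 1/x$ (exactly your ``sanity check'' observation), then invokes Proposition~\ref{First Full R Lemito} to see that source and target have the same finite cardinality, so injectivity forces bijectivity; finally it notes that finite Hausdorff implies discrete, giving the homeomorphism. In other words, your secondary route \emph{is} the paper's proof.

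Your factorization approach is more structural and does not rely on the finiteness of $R/(x)$, so it would transport to settings where a counting argument is unavailable. The cost is the need for contravariant functoriality of the dual, which you correctly flag; in the finite-discrete situation at hand this is immediate, and in any case the paper isolates it later as Proposition~\ref{Dual continuity}. The paper's counting argument is shorter and sidesteps that point, but both proofs ultimately need that the double dual is Hausdorff (hence discrete), which the paper handles via a forward reference to Proposition~\ref{LCM duality closed}.
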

\begin{proof} The kernel of this map consists of those $r$ such that $\varphi(r)=0$ for all $\varphi\in (R/(x))^{\wedge}$. In particular, such elements must be annihilated by the map $R/(x)\xrightarrow{} T$ which has $\varphi(1)=1/x$. Clearly $\varphi(r)=r/x$ is $0$ in $T$ if and only if $r=0$ in $R/(x)$, and hence we get that the double dual map is injective.

Thus, $R/(x)\xrightarrow{} (R/(x))^{\wedge\wedge}$ is thus an injection of $R/(x)$ into a module with which it is algebraically isomorphic, by Proposition \ref{First Full R Lemito}. In particular, both modules have the same number of elements and hence injectivity implies surjectivity by counting. Thus, the map is an isomorphism. It is also a homeomorphism of topological spaces since the dual of a Hausdorff space is Hausdorff (Proposition \ref{LCM duality closed}) so $(R/(x))^{\wedge\wedge}$ is finite Hausdorff so discrete. Thus, our proof is complete
\end{proof}

\begin{proposition}\label{Rhat isomorphism} The map $\widehat{R}\xrightarrow{} T$ sending $\varphi$ to $\varphi(1)$ is an $\HM_R$-isomorphism.
\end{proposition}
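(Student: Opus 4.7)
The plan is to separate the statement into its algebraic and topological components and observe that both $\widehat{R}$ and $T$ turn out to be discrete. For the underlying $R$-module structure, since $R$ is freely generated by $1$ over itself, every $R$-linear map $\varphi : R \to T$ is determined by $\varphi(1)$, giving injectivity of $\varphi \mapsto \varphi(1)$ for free. For surjectivity, given $t \in T$, I would exhibit the multiplication map $\mu_t : R \to T$, $r \mapsto rt$, as a continuous preimage. Continuity reduces (via Lemma~\ref{T discrete topology}) to checking that fibers over singletons in $T$ are open in $R$; every element of $T = K/R$ is torsion because $K = \bigcup_{n\geq 0} \pi^{-n}R$, so $\mathrm{ann}_R(t)$ contains some $(\pi^n)$ and is hence an open ideal, making each nonempty fiber of $\mu_t$ a coset of an open ideal. $R$-linearity of $\varphi \mapsto \varphi(1)$ is immediate from the premultiplication convention: $(r\cdot \varphi)(1) = \varphi(r) = r\cdot \varphi(1)$.

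For the topological half, I would argue that both sides are discrete, so that any algebraic bijection between them is automatically a homeomorphism. We have $T$ discrete by Lemma~\ref{T discrete topology}. For $\widehat{R}$, each singleton $\{\varphi_0\}$ can be exhibited as a basic open set of the compact-open topology by choosing the compact subset $Z = \{1\} \subseteq R$ and the open singleton $U = \{\varphi_0(1)\} \subseteq T$; the algebraic bijection just proved gives $V_R(\{1\};\{\varphi_0(1)\}) = \{\varphi_0\}$, so this set is open.

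There is no serious obstacle; the proposition is a direct corollary of three already-established facts: $T$ is discrete, $R$ is a free $R$-module on one generator, and $T = K/R$ is a torsion $R$-module. The argument parallels Proposition~\ref{First Full R Lemito}, with the main difference being that the ``$x = 0$'' case imposes no torsion condition on the image side, so the isomorphism hits all of $T$ rather than an $x$-torsion subgroup, and $\widehat{R}$ is discrete rather than finite.
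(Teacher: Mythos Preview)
Your proposal is correct and follows essentially the same route as the paper: establish the algebraic bijection via $R$ being free on one generator, then show both sides are discrete using $V(\{1\},\{\varphi_0(1)\})=\{\varphi_0\}$. The only difference is that the paper obtains continuity of each $\mu_t:R\to T$ by invoking Theorem~\ref{fully faithful} (all morphisms between canonical modules are continuous), whereas you verify it directly via the torsion argument; your version is slightly more self-contained but otherwise identical in structure.
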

\begin{proof} By Theorem \ref{fully faithful} all of the morphism defined are continuous, and so it is clear that this map is a bijection. What is left to show is that the induced topology on $R^{\wedge}$ agrees with that of $T$, i.e., that it is continuous (by Lemma \ref{T discrete topology}). This is obvious, however, since the open sets $V(\left\{1\right\},\varphi(1))$ single out each $\varphi$.
\end{proof}

\begin{proposition}\label{That isomorphism}The map $\widehat{T}\xrightarrow{} R$ sending $\varphi$ to $\varphi(1)$ is an $\HM_R$-isomorphism.
\end{proposition}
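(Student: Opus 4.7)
The plan is to present $T$ as the colimit $T = \varinjlim_n T_n$ of its $\pi^n$-torsion submodules $T_n := \pi^{-n}R/R \cong R/(\pi^n)$, reducing the calculation of $\widehat{T}$ to the finitely generated cases already handled by Proposition \ref{First Full R Lemito}. Since $T$ is discrete by Lemma \ref{T discrete topology}, every $R$-module map $\varphi : T \to T$ is automatically continuous, so $\widehat{T}$ coincides as a set with $\Hom_R(T,T)$.

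The key observation is that because $\pi^n T_n = 0$, the image $\varphi(T_n)$ lies in the $\pi^n$-torsion of $T$, which is exactly $T_n$. Hence $\varphi|_{T_n}$ is an $R$-endomorphism of $R/(\pi^n)$ and is therefore multiplication by a unique $r_n \in R/(\pi^n)$. Compatibility of restrictions along the inclusions $T_n \hookrightarrow T_{n+1}$—which, under the identifications with $R/(\pi^n) \hookrightarrow R/(\pi^{n+1})$, is exactly the $\inf$ map appearing in Proposition \ref{First Full R Lemito}—forces the residue relation $r_{n+1} \equiv r_n \pmod{\pi^n}$, producing an element $r = (r_n) \in \varprojlim_n R/(\pi^n) = R$. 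The explicit $R$-linear inverse sends $r$ to the scalar-multiplication map $t \mapsto r t$, so the assignment is a bijection of $R$-modules. I read ``$\varphi(1)$'' in the statement as this resulting scalar $r$, i.e., the unique element of $R$ for which $\varphi$ acts as multiplication by it.

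For the topological assertion, I would compare subbases directly. Compact subsets of the discrete space $T$ are finite, so the compact-open topology on $\widehat{T}$ has subbasic opens $V(\{t\},\{s\})$ for $t,s \in T$. Writing $t = [a\pi^{-n}]$ and $s = [b\pi^{-n}]$, the set $V(\{t\},\{s\})$ maps under $\varphi \mapsto r$ to $\{r \in R : ra \equiv b \pmod{\pi^n}\}$, which is either empty or a translate of an open ideal $\pi^k R$ and hence open in $R$. Conversely, a metric basic open $r_0 + \pi^n R \subseteq R$ is exactly the image of $V(\{[\pi^{-n}]\},\{[r_0\pi^{-n}]\})$, so subbasic opens correspond under the bijection in both directions, giving a homeomorphism. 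The main risk in the whole argument—minor, but the only real content—is matching the transition maps of $R = \varprojlim R/(\pi^n)$ against the inclusions $T_n \hookrightarrow T_{n+1}$, and this is precisely what the commutative diagram of Proposition \ref{First Full R Lemito} supplies.
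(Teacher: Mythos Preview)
Your argument is correct and follows essentially the same route as the paper: present $T$ as the directed colimit of its torsion submodules $T_n\cong R/(\pi^n)$, dualize to obtain $\widehat{T}\cong\varprojlim_n R/(\pi^n)=R$, and verify that the resulting bijection is the map in the statement. Your reading of ``$\varphi(1)$'' as the scalar by which $\varphi$ acts is exactly how the paper uses the symbol in its own proof.

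The only substantive difference is in the topological step. The paper, having established that $\widehat{T}$ is algebraically isomorphic to $R$, simply invokes Theorem~\ref{fin gen uniqueness} (uniqueness of the topological module structure on a finitely generated $R$-module) to conclude that the compact-open topology on $\widehat{T}$ must coincide with the metric topology on $R$. You instead verify the homeomorphism by hand, matching subbasic compact-open sets $V(\{t\},\{s\})$ with cosets of powers of the maximal ideal. Both work; the paper's appeal to Theorem~\ref{fin gen uniqueness} is slicker and illustrates the power of that uniqueness result, while your direct computation is self-contained and makes the correspondence explicit.
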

\begin{proof} To begin, we observe that there is an $R$-module isomorphism

$$\varinjlim_{x(\neq0)\in R}R/(x)\xrightarrow{\sim} T$$

sending $a_x\in R/(x)$ to $a_x/x$, where the direct limit structure is induced by the $\inf$ maps. Since $R/(x)$ and $T$ are both discrete, this is also a homeomorphism. Next, we get a canonical bijection

$$\widehat{T}\xrightarrow{\sim}\varprojlim_{x(\neq0)\in R}\left[R/(x),T\right].$$

using the fact that the representable functor $\cdot \mapsto [\cdot,T]$ turns limits into colimits. By Proposition \ref{First Full R Lemito}, we have isomorphisms $i_x:\left[R/(x),T\right]\xrightarrow{} R/(x)$ compatible with the inverse limit structure and hence

$$\varprojlim_{x(\neq0)\in R}\left[R/(x),T\right]=\varprojlim_{x(\neq0)\in R}R/(x)=R.$$

Thus $T^{\wedge}$ is algebraically equal to $R$, and in particular by Theorem \ref{fin gen uniqueness} we get that the compact-open topology on $T^{\wedge}$ is canonical and hence the bijection $T^{\wedge}\xrightarrow{\sim}\varprojlim\left[R/(x),T\right]$ is an $\HM_R$-isomorphism.

We now show that this isomorphism agrees with the one in the statement of the proposition. Given $\varphi\in T^{\wedge}$, its image in $\varprojlim_{x(\neq0)\in R}\left[R/(x),T\right]$ is the element $(\varphi_x)_{x(\neq0)\in R}$ where

$$\varphi_x(r)=\varphi(r/x).$$

Following the definition of $i_x$, each $\varphi_x$ sends to $\varphi_x(1)=\varphi(1/x)$, which then is written as $\varphi(1)/x$, which is sent to $\left.\varphi(1)\right|_{R/(x)}$. Thus, the image of $\varphi$ in $\varprojlim_{x(\neq0)\in R}R/(x)$ is

$$\left(\left.\varphi(1)\right|_{R/(x)}\right)_{x(\neq 0)\in R}.$$

This element clearly corresponds to $\varphi(1)\in R$, and hence we are done.

\end{proof}

\begin{corollary}\label{Final R Lemma} The double dual maps $R\xrightarrow{} \widehat{\widehat{R}}$ and $T\xrightarrow{} \widehat{\widehat{T}}$ are $\HM_R$-isomorphisms.
\end{corollary}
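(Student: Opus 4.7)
The plan is to obtain both double dual isomorphisms by chaining together the isomorphisms of Propositions \ref{Rhat isomorphism} and \ref{That isomorphism} via the dualization functor. The first observation is that the contravariant assignment $M \mapsto \widehat{M}$ carries $\HM_R$-isomorphisms to $\HM_R$-isomorphisms: for any isomorphism and homeomorphism $f : M \xrightarrow{\sim} N$, pullback by $f$ and pullback by $f^{-1}$ are mutually inverse module maps, and each is continuous in the compact-open topology because $f$ is a homeomorphism and thus carries compact sets to compact sets in either direction.

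Applying this to the isomorphism $\widehat{R} \xrightarrow{\sim} T$ of Proposition \ref{Rhat isomorphism} yields an $\HM_R$-isomorphism $\widehat{T} \xrightarrow{\sim} \widehat{\widehat{R}}$, and composing with the inverse of Proposition \ref{That isomorphism} produces an $\HM_R$-isomorphism $R \xrightarrow{\sim} \widehat{\widehat{R}}$. Dualizing the isomorphism of Proposition \ref{That isomorphism} and composing with the inverse of Proposition \ref{Rhat isomorphism} analogously yields an $\HM_R$-isomorphism $T \xrightarrow{\sim} \widehat{\widehat{T}}$.

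The remaining task is to verify that each composite agrees with the natural evaluation-based double dual map. For this it is crucial to identify the inverse of the map $\widehat{T} \to R$ of Proposition \ref{That isomorphism}: unpacking that proof's identifications backwards, an element $r \in R$ corresponds to the $\HM_R$-endomorphism $\psi_r : T \to T$ given by multiplication by $r$, since $r \in \varprojlim R/(x)$ pulls back to the collection of maps $R/(x) \to T$ sending $1 \mapsto r/x$, which glue to multiplication by $r$. Granting this, the composite $R \to \widehat{\widehat{R}}$ sends $r$ to the functional $\varphi \mapsto \psi_r(\varphi(1)) = r \cdot \varphi(1) = \varphi(r)$, which is exactly evaluation at $r$. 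The case of $T$ is symmetric: writing $\beta$ for the map of Proposition \ref{That isomorphism}, the composite sends $t \in T$ first to the map $\chi_t : R \to T$, $r \mapsto r \cdot t$, and then to $\varphi \mapsto \chi_t(\beta(\varphi)) = \beta(\varphi) \cdot t = \varphi(t)$.

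The only mild obstacle is the backwards chase through the proof of Proposition \ref{That isomorphism} to confirm the multiplication-by-$r$ description; beyond that, no new ideas are needed, since Theorem \ref{fully faithful} and the two preceding propositions carry all the weight.
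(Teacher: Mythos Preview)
Your proposal is correct and follows essentially the same approach as the paper: both arguments compose the isomorphisms of Propositions \ref{Rhat isomorphism} and \ref{That isomorphism} (and their duals) and then verify compatibility with the evaluation map. The only cosmetic difference is direction---the paper builds an isomorphism $i:\widehat{\widehat{R}}\to R$ and checks $i\circ d=\mathrm{id}$, whereas you build $R\to\widehat{\widehat{R}}$ and check it equals $d$---but the key computation (that the inverse of Proposition \ref{That isomorphism} sends $r$ to multiplication by $r$) is the same in both.
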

\begin{proof} We prove the double dual map $d:R\xrightarrow{} R^{\wedge\wedge}$ is an isomorphism, and omit the proof of the second statement since it is completely analogous. Let $i: R^{\wedge\wedge} \xrightarrow{} R$ denote the composition

$$\widehat{\widehat{R}}\xrightarrow{i_1} \widehat{T}\xrightarrow{i_2}R,$$

where $i_1$ is induced from the isomorphism $R^{\wedge}\xrightarrow{}{T}$ of Proposition \ref{Rhat isomorphism} and $i_2$ is the isomorphism $T^{\wedge}\xrightarrow{}{R}$ of Proposition \ref{That isomorphism}. We compute for any $x\in R$

\begin{align*}
(i\circ d)(x)&=i_2\left(i_1\left(\left(\varphi\mapsto \varphi(x)\right)\right)\right)\\
&=i_2\left(\left(t\mapsto x\cdot t\right)\right)\\
&=x.
\end{align*}

Hence $d$ postcomposed with an isomorphism is an isomorphism, so $d$ itself is an isomorphism and we are done.
\end{proof}

\begin{proposition}\label{FinGenMod case}Let $M$ be a finitely generated $R$-module. Then the double dual map $M\xrightarrow{} M^{\wedge\wedge}$ is an $\HM_R$-isomorphism.
\end{proposition}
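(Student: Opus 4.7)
The plan is to reduce to the cyclic cases already handled by decomposing $M$ using the structure theorem and showing that the dual functor converts finite biproducts to finite biproducts in $\HM_R$.

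By the classical structure theorem for finitely generated modules over a DVR, $M$ admits an algebraic decomposition $M \cong R^a \times \prod_{j=1}^b R/(\pi^{n_j})$ for some integers $a,b \geq 0$ and $n_j \geq 1$. Theorem \ref{fin gen uniqueness} automatically upgrades this to an isomorphism in $\HM_R$, since both sides carry the unique topological module structure compatible with their algebraic structure. The key technical ingredient is then a biproduct lemma: for any $M_1, M_2 \in \HM_R$, the natural restriction map $\alpha:(M_1 \times M_2)^{\wedge} \to M_1^{\wedge} \times M_2^{\wedge}$ sending $\varphi \mapsto (\varphi|_{M_1}, \varphi|_{M_2})$ is an $\HM_R$-isomorphism, with algebraic inverse $(\psi_1,\psi_2) \mapsto \psi_1 \circ \pi_1 + \psi_2 \circ \pi_2$. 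The bijectivity at the level of abstract $R$-modules is immediate from the biproduct property of Lemma \ref{biproduct}, and continuity of $\alpha$ is standard: precomposition with the continuous inclusions $M_i \hookrightarrow M_1 \times M_2$ is continuous in the compact-open topology.

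Iterating this biproduct lemma on the decomposition of $M$ yields an $\HM_R$-isomorphism
$$M^{\wedge\wedge} \cong (R^{\wedge\wedge})^a \times \prod_{j=1}^b \bigl(R/(\pi^{n_j})\bigr)^{\wedge\wedge},$$
compatible in the sense that the double dual map on $M$ is identified with the product of the double dual maps on each cyclic summand. Corollary \ref{Final R Lemma} handles each $R$-factor, Proposition \ref{Second Full R Lemito} handles each $R/(\pi^{n_j})$-factor, and a product of $\HM_R$-isomorphisms is an $\HM_R$-isomorphism, finishing the proof.

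The main obstacle I foresee is verifying continuity of the inverse $\beta:(\psi_1,\psi_2) \mapsto \psi_1 \circ \pi_1 + \psi_2 \circ \pi_2$ in the biproduct lemma, since the compact-open topology on $(M_1 \times M_2)^{\wedge}$ is generated by sub-basic opens $V(K;U)$ for $K$ compact in the product, and these do not obviously split between the two factors. One useful observation is that any compact $K \subset M_1 \times M_2$ is contained in $\pi_1(K) \times \pi_2(K)$, a product of compact projections. A cleaner alternative in our finitely generated setting, which bypasses general compact-open technology entirely, is the direct observation that $M^{\wedge}$ is discrete whenever $M$ is finitely generated: by Lemma \ref{compact complete} the module $M$ is compact, so $V(M;\{0\})$ is a sub-basic open set equal to $\{0\}$, and translation by continuous addition then makes every singleton open. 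With all dual spaces in sight discrete, the algebraic bijection $\alpha$ becomes automatically a homeomorphism.
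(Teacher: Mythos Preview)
Your approach is exactly the paper's: decompose $M$ into cyclic factors via the structure theorem and Theorem~\ref{fin gen uniqueness}, split the dual across the biproduct, and invoke Proposition~\ref{Second Full R Lemito} and Corollary~\ref{Final R Lemma} on each factor. The paper's own proof is a single sentence citing Lemma~\ref{biproduct}; you have usefully unpacked what that sentence hides.

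There is, however, one genuine gap. Your discreteness shortcut is correct for the \emph{first} application of the biproduct lemma: each cyclic factor is compact (Lemma~\ref{compact complete}), so $M$ is compact, so $M^{\wedge}$ is discrete and the algebraic bijection $\alpha$ is automatically a homeomorphism. But you must apply the lemma a second time, to split $(M_1^{\wedge}\times\cdots\times M_k^{\wedge})^{\wedge}$ as a product of double duals, and at that stage the factors $M_i^{\wedge}$ are discrete but typically \emph{not} compact (e.g.\ $R^{\wedge}\cong T$ is infinite discrete). So the phrase ``all dual spaces in sight discrete'' is false for the second-level duals, and your shortcut no longer applies. Your first resolution, replacing $K$ by $\pi_1(K)\times\pi_2(K)$, is a useful observation but not yet an argument: the containment $K\subseteq K_1\times K_2$ yields $V(K_1\times K_2;U)\subseteq V(K;U)$, which is the wrong direction for proving $\beta^{-1}(V(K;U))$ open.

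Two clean fixes. The most robust is to write $\beta$ as addition composed with $(\widehat{\pi_1},\widehat{\pi_2})$: each $\widehat{\pi_i}\colon M_i^{\wedge}\to (M_1\times M_2)^{\wedge}$ is continuous by Proposition~\ref{Dual continuity}, and addition on the dual is continuous because duals lie in $\HM_R$ (Proposition~\ref{LCM duality closed}, forward-referenced but logically independent of the present statement). This proves the biproduct lemma for arbitrary $M_i\in\HM_R$ and covers both iterations at once; incidentally it also justifies the ``translation by continuous addition'' step you already invoked. Alternatively, in the spirit of your shortcut: at the second level the factors $M_i^{\wedge}$ are discrete, so compact subsets of their product are finite and the compact-open topology reduces to pointwise convergence; then $\beta^{-1}\bigl(V(\{(x_1,x_2)\};U)\bigr)=\bigcup_{t\in T}V(\{x_1\};\{t\})\times V(\{x_2\};U-t)$ is manifestly open. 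Either patch completes your argument.
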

\begin{proof} Since $M$ is the direct sum of copies of $R$ and modules of the form $R/(x)$ for nonzero $x\in R$, using the coproduct property of the direct product in Lemma \ref{biproduct}, we conclude from Proposition \ref{Second Full R Lemito} and Corollary \ref{Final R Lemma}.
\end{proof}

We now prove a technical lemma that may appear esoteric, but it plays a key role in our proof that the double dual map is injective and continuous. This is because in any suitably nice category the double dual map $\varinjlim A\xrightarrow{}[[\varinjlim A,B],B]$ on a direct limit can be factored as

$$\varinjlim A\xrightarrow{i_1} \varinjlim [[A,B],B]\xrightarrow{i_2} [\varprojlim[A,B],B]\xrightarrow{i_3}[[\varinjlim A,B],B].$$

The map $i_1$ comes from the compostum of all the double dual maps $A\xrightarrow{}[[A,B],B]$. One can easily check that the double dual map is functorial, and makes the appropriate diagrams commute so that $i_1$ is indeed a morphism of direct limits.

The map $i_2$ comes from sending a morphism $f: [A,B]\xrightarrow{} B$ to the composition

$$\varprojlim [A,B] \xrightarrow{} [A,B] \xrightarrow{f} B,$$

where the morphism $\varprojlim [A,B] \xrightarrow{} [A,B]$ is projected onto the component.

The map $i_3$ is the dual of the morphism $[\varinjlim A,B]\xrightarrow{}\varprojlim[A,B]$ sending $f: \varinjlim A \xrightarrow{} B$ to the system $(\left.f\right|_{A})$ of restrictions.

The problem of analyzing the double-dual map in nice circumstances is thus reduced to analyzing these arrows one at a time. The next lemma serves to describe how the map $\varinjlim [[A,B],B]\xrightarrow{} [\varprojlim[A,B],B]$ behaves in our case.

\begin{lemma}\label{Lemma1} For any $R$-module $M$, we let $S\leq M$ run over finitely generated submodules of $M$ given canonical topology. The natural map

$$\varinjlim_{S\leq M}\left[[S,T],T\right]\xrightarrow{} \left[\varprojlim_{S\leq M}[S,T],T\right]$$

is injective and continuous.
\end{lemma}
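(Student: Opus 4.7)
The plan is to address continuity and injectivity separately, with the key algebraic input being that $T = K/R$ is an injective $R$-module.

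For continuity, I would first unpack the map: a class $[\phi] \in \varinjlim_{S} [[S,T],T]$ represented by $\phi: [S,T] \to T$ is sent to the composite $\phi \circ p_S : \varprojlim_{S'}[S',T] \to T$, where $p_S$ denotes the canonical projection out of the inverse limit onto the $S$-component. By the universal property of the direct limit on the source, continuity of the whole map reduces to checking that $\phi \mapsto \phi \circ p_S$ is continuous for each fixed $S$. The projection $p_S$ is continuous by the universal property of the inverse limit, hence carries compact sets to compact sets. Since $T$ is discrete (Lemma \ref{T discrete topology}), a subbasis for the codomain's compact-open topology consists of sets $V(C; \{t\})$ with $C$ compact and $t \in T$, and the preimage of such a set under $\phi \mapsto \phi \circ p_S$ is precisely $V_{[S,T]}(p_S(C); \{t\})$, again a subbasic open in $[[S,T],T]$.

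For injectivity, the plan is to show that $p_S: \varprojlim_{S'}[S',T] \to [S,T]$ is already surjective, from which the conclusion is immediate: if $\phi \circ p_S = 0$ then $\phi$ vanishes on all of $[S,T]$, so $\phi$ itself is zero before passing to the direct limit. Surjectivity of $p_S$ will come from injectivity of $T$ as an $R$-module — since $R$ is a PID and $T$ is visibly divisible, $T$ is injective. Given $a \in [S,T]$, I would extend $a$ algebraically to some $\tilde{a}: M \to T$ using injectivity. The family $(\tilde{a}|_{S'})_{S' \leq M}$ is automatically coherent under the restriction maps, and each restriction $\tilde{a}|_{S'}$ is continuous by Theorem \ref{fully faithful} since $S'$ and $T$ both carry their canonical topologies. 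This family thus defines an element of $\varprojlim_{S'}[S',T]$ whose image under $p_S$ is $a$.

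The hard part, I expect, is recognizing that injectivity of $T$ — a purely algebraic property — is what makes the inverse limit of continuous morphisms project surjectively, side-stepping the sort of Mittag-Leffler obstructions one might a priori worry about. Once this observation is in hand, both claims reduce to routine verifications using universal properties and the structure of the compact-open topology. A subtlety worth noting is that the extension $\tilde{a}$ is only produced algebraically; continuity of each restriction $\tilde{a}|_{S'}$ is not something one has to check by hand, but is recovered automatically from Theorem \ref{fully faithful}.
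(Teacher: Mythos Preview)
Your proof is correct. The continuity argument is essentially the paper's: both reduce to checking that the preimage of a subbasic compact--open set $V(Z;U)$ in each component $[[S,T],T]$ is $V(p_S(Z);U)$, which is open because $p_S$ carries compacts to compacts.

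Your injectivity argument, however, is genuinely different and cleaner. The paper does not reduce to the kernel; instead it takes two representatives $\Phi_0\in[[S_0,T],T]$ and $\Phi_1\in[[S_1,T],T]$ with the same image, shows the common value $\Phi$ factors through $[S_0\cap S_1,T]$, and constructs a third representative $\Phi_2$ at the level $S_0\cap S_1$ to which both $\Phi_0$ and $\Phi_1$ descend in the direct limit. This requires a somewhat delicate patching argument on $S_0+S_1$ and uses Baer's criterion to lift from $[S_0\cap S_1,T]$ back to the inverse limit. You instead observe that the map $i$ is an $R$-module homomorphism, so injectivity is equivalent to trivial kernel; then surjectivity of $p_S$ (from divisibility/injectivity of $T$, with continuity of the restrictions recovered by Theorem~\ref{fully faithful}) immediately gives that $\phi\circ p_S=0$ forces $\phi=0$. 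Your route avoids the patching at $S_0\cap S_1$ and the construction of $\Phi_2$ entirely, at the cost of invoking the homomorphism structure of $i$ and the algebraic injectivity of $T$ slightly more globally.
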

\begin{proof} We denote the above map as $i$. For all fin gen submodules $S'$, we denote the projection $\varprojlim[S,T]\xrightarrow{} [S',T]$ by $p_{S'}$. Choose two fin gen submodules $S_0,S_1$, and maps $\Phi_0: [S_0,T]\xrightarrow{} T$, $\Phi_1: [S_1,T]\xrightarrow{} T$ with the same image under $i$. This means that the square

\[
\begin{tikzcd}
\varprojlim_{S\leq M}\left[S,T\right]\arrow[r,"p_{S_0}"]\arrow[d,"p_{S_1}"]\arrow[dr,"\Phi",dotted]& \left[S_0,T\right]\arrow[d,"\Phi_0"] \\
\left[S_1,T\right]\arrow[r,"\Phi_1"]& T\\
\end{tikzcd}
\]

commutes, where $\Phi$ is the composition defined by going along the square in either direction. We now show that $\Phi$ factors through $[S_0\cap S_1,T]$. Choose $\varphi,\varphi': (S_0+S_1)\xrightarrow{}T$ with the same restriction to $S_0\cap S_1$, where $S_0+S_1$ denotes the $R$-module generated by $\alpha$ and $\beta$. We wish to show $\Phi(\varphi)=\Phi(\varphi')$. To do this, we define $\varphi'':(S_0+S_1)\xrightarrow{}T$ by the formula

$$\varphi''(r_0+r_1)=\varphi(r_0)+\varphi'(r_1)$$

where $r_0\in S_0$ and $r_1\in S_1$. The expression $r_0+r_1$ of an element in $S_0+S_1$ is only unique up to moving across an element of $S_0\cap S_1$. This still leads to a well defined value for $\varphi''$, however, since $\varphi$ and $\varphi'$ are homomorphisms that agree on that intersection. The map $\varphi''$ is continuous, since its domain is fin gen, hence it is canonical by Theorem \ref{fin gen uniqueness}, and all maps leaving canonical modules are continuous by Theorem \ref{fully faithful}

Thus, since $\varphi''$ and $\varphi$ agree on $S_0$, we have that $\Phi(\varphi'')=\Phi(\varphi')$. Since $\varphi''$ and $\varphi'$ agree on $S_1$, we have that $\Phi(\varphi'')=\Phi(\varphi)$. Combining, we have that $\Phi(\varphi)=\Phi(\varphi')$ as desired. Thus, the action of $\Phi$ depends only on the behavior in $S_0\cap S_1$. In particular, we can define a map $\Phi_2: [S_0\cap S_1,T]\xrightarrow{T}$ by sending a map $\varphi: (S_0\cap S_1)\xrightarrow{}T$ to $\Phi(\tilde{\varphi})$, where $\tilde{\varphi}$ is any lift of $\varphi$ to $\varprojlim_{S\leq M}[S,T]$. The fact that we can lift $\varphi$ to $\varprojlim_{S\leq M}[S,T]$ is a special case of Baer's criterion, which in this case states that morphisms any $R$-modules $X$ with $\pi X=X$ can be lifted to any enlarged domain. Additionally, the domain of $\Phi_2$ is the dual of a finite product of copies of $R/(x)$ for  $x\neq 0$ and $R$, which is isomorphic to a finite product of copies of $R/(x)$ for $x\neq0$ and $R$, and hence is discrete. Any map leaving a discere domain is continuous, so $\Phi_2$ is continuous. Collecting, we get that the following diagram commutes:

\[
\begin{tikzcd}
& \varprojlim_{S\leq M}\left[S,T\right]\arrow[dl,"p_{S_0}"] \arrow[d,"p_{S_0\cap S_1}"]\arrow[dr,"p_{S_1}"]& \\
\left[S_0,T\right]\arrow[r]\arrow[dr,"\Phi_0"] & \left[S_0\cap S_1, T\right]\arrow[d,"\Phi_2"] & \left[S_1, T\right]\arrow[dl,"\Phi_1"]\arrow[l]\\
& T & \\
\end{tikzcd}
\]
By the definition of the direct limit's equivalence relation, this means that $\Phi_{0}=\Phi_{1}=\Phi_{2}$ in $\varinjlim \left[[S,T],T\right]$. Hence, since $\Phi_0$ and $\Phi_1$ were chosen arbitrarily, we are done.

To demonstrate continuity, it is sufficient to prove that $i^{-1}(V_{\varprojlim[S,T]}(Z,U))$ is open for compact $Z\subseteq \varprojlim_{\alpha}[M_\alpha,T]$ and open $U\subseteq T$. Namely, for all $S_0$ we compute

\begin{align*}
&i^{-1}\left(V_{\varprojlim[S,T]}(Z,U)\right)\cap \left[[S_0,T],T\right]\\
&=\left\{\Phi: [S,T]\xrightarrow{}T \st \Phi (p_{S}(Z))\subseteq U,\,\,S\leq M\right\}\cap \left[[S_0,T],T\right]\\
&=\left\{\Phi: [S_0,T]\xrightarrow{}T \st \Phi (p_{S_0}(Z))\subseteq U\right\}\\
&=V_{[S_0,T]}(p_{S_0}(Z),U)
\end{align*}

to be open in $[[S_0,T],T]$ since $p_{S_0}(K)$ is compact, as the image of compact is compact. By the definition of the direct limit topology, we have finished proving the continuity of the map.
\end{proof}

\begin{lemma}\label{Lemma4} The double dual map on every canonical module is injective and continuous.
\end{lemma}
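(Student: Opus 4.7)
The plan is to deploy the general factorization of the double-dual map sketched just before Lemma \ref{Lemma1}. Let $M$ be canonical, and write $M = \varinjlim_{S \leq M} S$ with $S$ ranging over finitely generated submodules equipped with the canonical (equivalently, subspace) topology. Specializing the factorization to $A = S$ and $B = T$, the double dual map $M \to \widehat{\widehat{M}}$ decomposes as
$$
M \,=\, \varinjlim_{S} S
\,\xrightarrow{i_1}\,
\varinjlim_{S} \widehat{\widehat{S}}
\,\xrightarrow{i_2}\,
\bigl[\varprojlim_{S} \widehat{S},\, T\bigr]
\,\xrightarrow{i_3}\,
\bigl[\widehat{M},\, T\bigr]
\,=\, \widehat{\widehat{M}},
$$
and it suffices to show each $i_j$ is injective and continuous. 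A short diagram chase against the definitions in the introductory sketch confirms that the composition does recover $m \mapsto (\varphi \mapsto \varphi(m))$.

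For $i_1$, each finite-generation double dual map $S \to \widehat{\widehat{S}}$ is an $\HM_R$-isomorphism by Proposition \ref{FinGenMod case}, and these isomorphisms are natural in $S$; passing to the direct limit makes $i_1$ itself an $\HM_R$-isomorphism. Step $i_2$ is exactly the content of the just-proved Lemma \ref{Lemma1}. For $i_3$: the restriction assignment $r : \widehat{M} \to \varprojlim_{S} \widehat{S}$, $f \mapsto (f|_S)_S$, is a continuous $R$-module bijection by the universal property of the direct limit in $\HM_R$, and $i_3$ is precomposition with $r$. Precomposition with a continuous map is continuous for the compact-open topology, because the preimage of a subbasic set $V(Z, U)$ is $V(r(Z), U)$ and $r(Z)$ is compact whenever $Z$ is; precomposition with a surjection is injective, because two functionals on $\varprojlim_{S} \widehat{S}$ agreeing on the image of $r$ must agree everywhere.

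The composition of three injective continuous maps is injective and continuous, which finishes the proof. The technical heart of the argument is $i_2$, whose injectivity rests on the Baer-criterion lifting step carried out inside Lemma \ref{Lemma1}; by contrast, the verifications for $i_1$ and $i_3$ are essentially formal once one remembers that the canonical topology on $M$ is, by definition, the direct limit topology over finitely generated submodules.
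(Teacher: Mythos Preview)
Your proof is correct and follows the same three-step factorization as the paper. The only difference lies in $i_3$: the paper invokes Proposition~\ref{Lemma5} to conclude that the restriction map $r:\widehat{M}\to\varprojlim_{S}\widehat{S}$ is an $\HM_R$-isomorphism, hence so is its dual $i_3$, whereas you argue directly that $r$ being a continuous surjection already forces $i_3$ to be injective and continuous. Your route is slightly more self-contained for this lemma and avoids the forward reference to \S\ref{Topological Modules over Compact DVRs}; the paper's route establishes the stronger fact that $i_3$ is an isomorphism, but that extra strength is not needed here (it is used later, in Proposition~\ref{Lemma6}).
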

\begin{proof} Write $M=\varinjlim_{S\leq M} S$, with $S$ running over fin gen submodules as usual. We find that the double dual map $M\xrightarrow{} M^{\wedge\wedge}$ can be factored as the composition of canonical maps

$$M= \varinjlim_{S\leq M} S \xrightarrow{i_1} \varinjlim_{S\leq M} [[S,T],T] \xrightarrow{i_2}  [\varprojlim_{S\leq M}[S,T],T]\xrightarrow{i_3}  [[\varinjlim_{S\leq M}S,T],T]=\widehat{\widehat{M}}.$$

The map $i_1$ is an $\HM_R$-isomorphism by Proposition \ref{FinGenMod case}. The map $i_2$ is continuous injection by Lemma \ref{Lemma1}. The map $i_3$ is an $\HM_R$-isomorphism by Proposition \ref{Lemma5}. The composition of continuous injections is a continuous injection, and so we are done.
\end{proof}

\section{Topological Modules over Compact DVRs}\label{Topological Modules over Compact DVRs}

In this section we prove relevant results pertaining to topological $R$-modules. We begin with a standard proposition about the compact-open topology, which is useful since it realizes duality as a contravariant functor $\HM_R\xrightarrow{}\HM_R$:

\begin{proposition}\label{Dual continuity} For any $\HM_{R}$-morphism $\varphi: M\xrightarrow{}N$, the precomposition map $\widehat{N}\xrightarrow{\widehat{\varphi}}\widehat{M}$ is continuous. By precomposition, we mean that  $\phi:N\xrightarrow{} T$ maps to $\phi\circ \varphi:M\xrightarrow{}T$.
\end{proposition}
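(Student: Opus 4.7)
The plan is to verify continuity by checking that the preimage under $\widehat{\varphi}$ of each subbasic open set in $\widehat{M}$ is open in $\widehat{N}$. Recall that the compact-open topology on $\widehat{M}$ has as a subbasis the sets $V_M(Z;U)$ where $Z\subseteq M$ is compact and $U\subseteq T$ is open, so it suffices to analyze the preimages of these.

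Fix such compact $Z\subseteq M$ and open $U\subseteq T$. Unwinding the definition, one has
\[
\widehat{\varphi}^{-1}\bigl(V_M(Z;U)\bigr)=\bigl\{\phi\in \widehat{N}\st (\phi\circ\varphi)(Z)\subseteq U\bigr\}=\bigl\{\phi\in \widehat{N}\st \phi(\varphi(Z))\subseteq U\bigr\}=V_N(\varphi(Z);U).
\]
The continuity of $\varphi$ ensures that $\varphi(Z)$ is the continuous image of a compact set, hence compact in $N$. Therefore $V_N(\varphi(Z);U)$ is itself a subbasic open set in the compact-open topology on $\widehat{N}$, which gives the desired openness.

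There is no real obstacle here beyond carefully tracking the definitions; this is the standard functoriality of the compact-open topology applied in our setting. The only point worth noting is that we do not need any special property of $T$ or of $R$-linearity — the argument only uses continuity of $\varphi$ and compactness being preserved by continuous images, so the same reasoning would apply to the internal hom in any reasonable category of topological modules.
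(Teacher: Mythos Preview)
Your proof is correct and follows essentially the same approach as the paper: both compute $\widehat{\varphi}^{-1}(V_M(Z;U))=V_N(\varphi(Z);U)$ and conclude by noting that continuous images of compact sets are compact. Your added remark about the argument not requiring any special features of $T$ or $R$-linearity is accurate but not part of the paper's presentation.
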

\begin{proof} Given any open set $V_{M}(Z;U)\subseteq M^{\wedge}$, we find that

\begin{align*}
\widehat{\varphi}^{-1}\left(V_M(Z,U)\right)&=\left\{\phi\in \widehat{N} \st (\phi\circ \varphi)(Z)\subseteq U\right\}\\
&=\left\{\phi\in \widehat{N} \st \phi\left(\varphi(Z)\right)\subseteq U\right\}\\
&=V_N(\varphi(Z),U).
\end{align*}

Since the image of a compact set is compact, we are done.
\end{proof}

The following argument is taken from the proof of Proposition 1.1 in \cite{anh1981duality}. In this paper, Anh says that ``It is well known that injectivity plays a very important role in duality theory". Despite this statement being in reference to Morita duality, it is equally apt here. We repeatedly use the ability to lift continuous morphisms, making the below proposition a core part of the proof.

\begin{proposition}\label{T almost injective} Let $R$ be a compact DVR, let $M\in \HM_{R}$ be an object, and let $N$ be a submodule of $M$. The precomposition map $\widehat{M}\xrightarrow{}\widehat{N}$ is surjective, i.e.,every $\HM_R$-morphism $N\xrightarrow{}T$ can be lifted to a continuous map $M\xrightarrow{}T$.
\end{proposition}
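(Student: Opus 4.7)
The plan is to apply Zorn's lemma to continuous partial extensions of $\varphi$, leveraging two background facts. First, $T = K/R$ is $\pi$-divisible (every coset $a/\pi^{n} + R$ equals $\pi \cdot (a/\pi^{n+1} + R)$), so by Baer's criterion for the principal ring $R$, $T$ is algebraically injective as an $R$-module. Second, $T$ is discrete by Lemma \ref{T discrete topology}, so continuity of a homomorphism into $T$ is equivalent to openness of its kernel.

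Apply Zorn to the poset of continuous $R$-module extensions $(N', \varphi')$ of $\varphi$ to submodules $N \leq N' \leq M$, and let $(N^*, \varphi^*)$ be a maximal element. I claim $N^*$ is closed: $\ker \varphi^*$ is clopen in $N^*$, and its closure $\overline{\ker \varphi^*}$ inside $\overline{N^*}$ is an open $R$-submodule with $\overline{\ker \varphi^*} \cap N^* = \ker \varphi^*$ (by a density argument), so $N^*/\ker \varphi^*$ embeds as an $R$-submodule of the discrete module $\overline{N^*}/\overline{\ker \varphi^*}$; algebraic injectivity of $T$ then yields a continuous extension to $\overline{N^*}$, forcing $N^* = \overline{N^*}$ by maximality. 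Suppose for contradiction $N^* \neq M$ and pick $m \in M \setminus N^*$; I will construct a continuous extension to $N^* + Rm$ to obtain the contradiction.

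Let $I = \{r \in R : rm \in N^*\}$, which is either $0$ or $(\pi^k)$ for some $k \geq 0$. In the latter case set $n_0 = \pi^k m \in N^*$ and, using $\pi^k$-divisibility of $T$, pick $t \in T$ with $\pi^k t = \varphi^*(n_0)$; otherwise let $t$ be arbitrary. Define $\rho : Rm \to T$ by $\rho(rm) = rt$. The inclusion $\mathrm{Ann}(m) \subseteq I$ makes this well-defined, and $\rho$ agrees with $\varphi^*$ on $N^* \cap Rm$ by construction. Continuity of $\rho$ is automatic: the continuous $R$-module bijection $R/\mathrm{Ann}(m) \to Rm$ is a homeomorphism (source compact, target Hausdorff), and every $R$-module homomorphism from $R$ or from a quotient $R/(\pi^\ell)$ into $T$ has open kernel. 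Thus $\tilde\varphi(n + y) := \varphi^*(n) + \rho(y)$ is a well-defined $R$-linear map on $N^* + Rm$.

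The main obstacle is proving $\tilde\varphi$ continuous with respect to the subspace topology from $M$. My approach is to show the addition map $f : N^* \times Rm \to M$, $(n, y) \mapsto n + y$, is a quotient map onto $N^* + Rm$. The map is proper: for compact $C \subseteq M$, the preimage $f^{-1}(C)$ sits inside $((C + Rm) \cap N^*) \times Rm$, where $C + Rm$ is compact as the continuous image of the compact $C \times Rm$, its intersection with the closed $N^*$ is a closed subset of a compact hence compact, and $Rm$ itself is compact. Proper continuous maps into Hausdorff spaces are closed, so $f$ is a closed continuous surjection onto $N^* + Rm$ with the subspace topology, hence a quotient map. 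The composition $N^* \times Rm \to T$, $(n, y) \mapsto \varphi^*(n) + \rho(y)$, is visibly continuous (sum of two continuous maps) and factors through $f$ by the agreement of $\rho$ and $\varphi^*$ on $N^* \cap Rm$, so $\tilde\varphi$ is continuous, contradicting maximality.
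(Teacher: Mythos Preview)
Your argument has a genuine gap at the very first step: Zorn's lemma does not apply as stated, because the union of a chain of continuous extensions need not be continuous. Take $M = R^{\NN}$ with the product topology (this lies in $\HM_R$), set $N_n = \{(x_i) \in M : x_i = 0 \text{ for } i > n\}$, and define $\varphi_n : N_n \to T$ by $(x_i) \mapsto \pi^{-1}\sum_{i \le n} x_i$. Each $\varphi_n$ is continuous and extends $\varphi_{n-1}$, so $(N_n,\varphi_n)_{n\ge 1}$ is a chain in your poset (taking $N=N_1$, $\varphi = \varphi_1$). The union map $\psi$ on $\bigoplus_{\NN} R$ sends $(x_i)$ to $\pi^{-1}\sum_i x_i$, but $\ker\psi$ contains no basic open neighbourhood of $0$ in the subspace topology from $M$: any such neighbourhood lets all coordinates past some index range freely over $R$, hence contains some $e_j = (0,\dots,0,1,0,\dots) \notin \ker\psi$. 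So $\psi$ is discontinuous and the chain has no upper bound in your poset.

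The paper sidesteps Zorn altogether. From the openness of $\ker\varphi$ in $N$ it produces an open submodule $\tilde U \leq M$ on which the extension can be declared zero, giving a continuous $\tilde\varphi : N + \tilde U \to T$; then a single application of algebraic injectivity of $T$ (Baer's criterion) lifts $\tilde\varphi$ to all of $M$, and continuity is automatic since the kernel still contains the open $\tilde U$. Your closure step and your properness argument are both correct and rather elegant, but once one has an open submodule anchoring the kernel they become unnecessary, and without such an anchor the limit stage of the transfinite induction fails. The natural repair of your approach is to restrict the Zorn poset to extensions whose kernel contains a fixed open submodule of $M$; unions of chains then inherit this property and are automatically continuous.
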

\begin{proof} Consider $\varphi\in N^{\wedge}$. We wish to show that there is a morphism in $M^{\wedge}$ whose restriction to $N$ is equal to $\varphi$. Since $\ker \varphi$ is the preimage of $\{0\}$, $T$ being discrete (Lemma \ref{T discrete topology}) implies that $\ker \varphi$ is open. By the definition of the subspace topology this means that $\ker \varphi = U \cap N$ where $U$ is an open subset of $M$. Since $\ker \varphi$ is a submodule we have that $U\cap N=\tilde{U}\cap N$ where $\tilde{U}$ is the submodule generated by $U$. Since the $\tilde{U}$ contains an open neighborhood of one of its points it must contain an open neighborhood of all its points by shifting and using the continuity of addition, hence $\tilde{U}$ is open.

We now define a morphism

\begin{align*}
\tilde{\varphi}: N+\tilde{U} &\xrightarrow{} T.\\
n+u &\mapsto \varphi(n)
\end{align*}

The choice of decomposition of an element into the form $n+u$ is unique up to moving across an element of $\tilde{U}\cap N$, and seeing as $\varphi$ acts by zero on this set the map is indeed well defined. The preimage of $t\in T$ is equal to $\varphi^{-1}(t)+\tilde{U}$, which is open since the sum of open sets is open, and hence $\tilde{\varphi}$ is continuous.

It is clear that $\pi T=T$. Baer's criterion (c.f, \cite{baer1940abelian}) states that this is sufficient for $T$ to be injective in the category of $R$-modules, meaning we can algebraically lift $\tilde{\varphi}$ all the way to $M$. This lift is continuous on an open neighborhood of $0$, namely $\tilde{U}$. By continuity of addition, this means that $\tilde{\varphi}$ is continuous on $x+\tilde{U}$ for all $x\in M$, and hence it is continuous on a neighborhood of every point so we are done.
\end{proof}

\begin{corollary}\label{T cogenerator} Over $\HM_R$, $\widehat{M}=0$ if and only if $M=0$.
\end{corollary}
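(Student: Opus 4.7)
The ``if'' direction is immediate, so the plan is to prove the contrapositive of the nontrivial direction: if $M \neq 0$ then $\widehat{M} \neq 0$. The strategy is to exhibit a non-zero continuous morphism on a well-chosen submodule and then invoke the lifting property established in Proposition \ref{T almost injective}.

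First, I would pick any non-zero $m \in M$ and let $N = R \cdot m \leq M$, equipped with the subspace topology inherited from $M$. The subspace topology on a submodule of a topological module is again a topological module structure (addition, inversion, and scalar multiplication remain continuous), so $N \in \HM_R$. Algebraically, $N \cong R/\mathrm{Ann}(m)$. Since $R$ is a DVR and $m \neq 0$, the ideal $\mathrm{Ann}(m)$ is either $(0)$ or $(\pi^n)$ for some $n \geq 1$. By Theorem \ref{fin gen uniqueness}, the topology on the finitely generated module $N$ is forced: it is the metric topology when $\mathrm{Ann}(m)=0$ and the discrete topology when $\mathrm{Ann}(m)=(\pi^n)$.

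Next, I would exhibit a non-zero element of $\widehat{N}$ in each case. If $N \cong R/(\pi^n)$, then $N$ is discrete, so every $R$-linear map is automatically continuous; the assignment $1 \mapsto 1/\pi^n \in T$ defines a non-zero morphism (which is well-defined since $\pi^n \cdot (1/\pi^n) = 0$ in $T$). If $N \cong R$, then $\widehat{R} \cong T$ by Proposition \ref{Rhat isomorphism}, and $T \neq 0$, so a non-zero continuous morphism exists (concretely, $r \mapsto r/\pi$). In either case, $\widehat{N} \neq 0$.

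Finally, Proposition \ref{T almost injective} gives surjectivity of the restriction map $\widehat{M} \twoheadrightarrow \widehat{N}$, so the non-zero morphism on $N$ lifts to a non-zero element of $\widehat{M}$, completing the proof. There is no real obstacle here: the potentially subtle point, namely that the subspace topology on the cyclic submodule $N$ might be pathological or fail to be Hausdorff, is handled cleanly by Theorem \ref{fin gen uniqueness}, which collapses any such worry since $N$ is finitely generated.
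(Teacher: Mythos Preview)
Your proof is correct and follows essentially the same approach as the paper: pick a nonzero element, observe that the cyclic submodule it generates has a forced topology by Theorem~\ref{fin gen uniqueness}, write down an explicit nonzero continuous map into $T$, and lift via Proposition~\ref{T almost injective}. The only cosmetic difference is that the paper uses the single map $x \mapsto \pi^{-1}$ uniformly for both the $R$ and $R/(\pi^n)$ cases, whereas you treat the two cases separately with $1 \mapsto 1/\pi^n$ and $r \mapsto r/\pi$.
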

\begin{proof} Choose a nonzero $x\in M$. The submodule $R\cdot x$ must be algebraically isomorphic either $R/(\pi^n)$ or $R$, with $x$ mapping to $1$, and by Theorem \ref{fin gen uniqueness} the topology must be canonical. Either way, the map $R\cdot x \xrightarrow{} T$ distinguished by the property that $x$ sends to $\pi^{-1}$ is well defined and continuous. Thus, we get that for any nonzero $x\in M$ there exists a continuous map $R\cdot x\xrightarrow{} T$ which does not act by $0$ on $x$. Lifting with Proposition \ref{T almost injective} we are done.
\end{proof}

It is an important part of Pontryagin duality theory that the dual of a locally compact abelian group is again locally compact, though that fact is very nontrivial to prove. It requires a proper development of the theory of measures and integration. The proof below is partially due to Eric Wofsey, through collaboration on the online Q$\&$A site Mathematics Stack Exchange, and is much more elementary in nature.

\begin{proposition}\label{LCM duality closed} The categories $\HM_R$ and $\LCM_R$ are closed under taking duals.
\end{proposition}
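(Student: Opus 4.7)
I would split the claim in two: $\widehat{M}\in\HM_R$ whenever $M\in\HM_R$ (essentially formal, exploiting discreteness of $T$), and $\widehat{M}\in\LCM_R$ whenever $M\in\LCM_R$ (the substance of the proof). For Hausdorffness, given $\varphi\neq\psi$ in $\widehat{M}$ and $m\in M$ with $\varphi(m)\neq\psi(m)$, the sets $V(\{m\},\{\varphi(m)\})$ and $V(\{m\},\{\psi(m)\})$ are disjoint open neighborhoods, since singletons in $T$ are open by Lemma \ref{T discrete topology}. For continuity of addition, inversion, and scalar multiplication in the compact-open topology, the key observation is that for any compact $K\subseteq M$ and $\varphi_0\in\widehat{M}$, the image $\varphi_0(K)$ is a finite subset $\{t_1,\dots,t_n\}$ of $T$ and $K$ partitions into clopen pieces $K_i=K\cap\varphi_0^{-1}(t_i)$; then $\bigcap_i V(K_i,\{t_i\})$ is a basic open neighborhood of $\varphi_0$ consisting of maps that agree with $\varphi_0$ on all of $K$. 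Continuity of the operations follows, using additionally that each $t_i$ has some annihilator $(\pi^{N_i})$, so any $r\in R$ sufficiently close to $r_0$ acts identically on the finite set $\{t_1,\dots,t_n\}$.

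For the $\LCM_R$ part, I would exhibit a compact open neighborhood of $0$ in $\widehat{M}$. First, every $M\in\HM_R$ is totally disconnected: the connected component $M_0$ of $0$ is a closed $R$-submodule (connectedness is preserved by continuous addition and by each scalar multiplication map), any continuous $R$-linear map $M_0\to T$ sends connected $M_0$ into discrete $T$ with $0\mapsto 0$ and so vanishes, giving $\widehat{M_0}=0$, and then Corollary \ref{T cogenerator} forces $M_0=0$. By Van Dantzig's theorem, the LC Hausdorff totally disconnected abelian group $M$ admits a compact open subgroup $H$. Now compactness of $R$ enters: by joint continuity of scalar multiplication at $(0,0)$ combined with a finite-subcover argument on $R$ (for each $r\in R$ pick neighborhoods $A_r\ni r$, $W_r\ni 0$ with $A_r\cdot W_r\subseteq H$, cover $R$ by finitely many $A_{r_j}$, and set $W:=\bigcap_j W_{r_j}$), there is an open neighborhood $W\subseteq M$ of $0$ with $R\cdot W\subseteq H$. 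The $R$-submodule $K_0:=\langle W\rangle_R$ is open (it contains $W$) and sits inside the subgroup $H$ (since $R\cdot W\subseteq H$ and $H$ is closed under addition); since open subgroups are automatically closed, $K_0$ is closed in the compact $H$, hence itself compact.

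Finally, the set $V(K_0,\{0\})$ is then an open neighborhood of $0$ in $\widehat{M}$, and I would show it is compact. Every $\varphi\in V(K_0,\{0\})$ factors through $M/K_0$, which is discrete (since $K_0$ is open) and $\pi$-power torsion (since $\pi^n m\to 0$ in $M$ for each $m$, by continuity of scalar multiplication and $\pi^n\to 0$ in $R$, so $\pi^n m\in K_0$ eventually). Thus for each $[m]\in M/K_0$, $\varphi([m])$ lies in the finite submodule $T[\pi^{k_{[m]}}]\subseteq T$, so $V(K_0,\{0\})$ embeds as a closed subset (cut out by $R$-linearity conditions) of $\prod_{[m]} T[\pi^{k_{[m]}}]$, a product of finite sets that is compact by Tychonoff. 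A short check shows that the subspace topology on $V(K_0,\{0\})$ inherited from $\widehat{M}$ matches this product topology, since compact subsets of $M$ project to compact (hence finite) subsets of discrete $M/K_0$, and every finite subset of $M/K_0$ lifts to a finite, hence compact, subset of $M$. The most delicate step is the upgrade, in the previous paragraph, of Van Dantzig's compact open subgroup to a compact open $R$-submodule; this is where compactness of $R$ is indispensable, and reflects why this duality theory does not extend so cleanly to duality over noncompact base rings.
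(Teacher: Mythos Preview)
Your proof is correct, but the $\LCM_R$ half takes a genuinely different route from the paper's.

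For Hausdorffness you and the paper argue identically. You additionally sketch continuity of addition, inversion, and scalar multiplication on $\widehat{M}$; the paper omits this entirely, presumably treating it as routine.

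For local compactness the approaches diverge. The paper avoids Van Dantzig and total disconnectedness altogether: given $\varphi\in\widehat{M}$, it uses only that $\ker\varphi$ is open (discreteness of $T$) together with local compactness of $M$ to find open $U$ and compact $Z$ with $0\in U\subseteq Z\subseteq\ker\varphi$. Then $V(Z,\{0\})$ is an open neighborhood of $\varphi$, while $V(U,\{0\})=V(\tilde U,\{0\})$, with $\tilde U$ the open submodule generated by $U$, is compact by the same Tychonoff argument you give (identify it with $(M/\tilde U)^{\wedge}$, note $M/\tilde U$ is discrete and torsion, and embed into a product of finite sets). That is the whole argument---no structure theory of $M$ is needed beyond the existence of one compact neighborhood inside one open set. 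Your route instead first proves every $M\in\HM_R$ is totally disconnected (via $\widehat{M_0}=0$ and Corollary~\ref{T cogenerator}), invokes Van Dantzig for a compact open subgroup $H$, and then uses compactness of $R$ to manufacture a compact open $R$-\emph{submodule} $K_0\subseteq H$; only then do you run the Tychonoff step. This costs you more machinery, but it buys a structural dividend the paper's proof does not yield directly: every $M\in\LCM_R$ admits a compact open $R$-submodule, and consequently $\widehat{M}$ has a compact \emph{open} neighborhood of $0$, not merely a compact neighborhood. Both arguments terminate in the identical Tychonoff computation.
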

\begin{proof} To show that $\HM_R$ is closed under taking duals, we choose $M\in \HM_R$ and any unequal $\varphi_0,\varphi_1\in M^{\wedge}$. Clearly, we can find $m\in M$ such that $\varphi_0(m)\neq \varphi_1(m)$. By Lemma \ref{T discrete topology} we have that $T$ is discrete so $\{\varphi_0(m)\},\{\varphi_1(m)\}$ are open. The sets $V(\{m\},\left\{\varphi_0(m)\right\})$ and $V(\{m\},\left\{\varphi_1(m)\right\})$ are thus disjoint open neighborhoods separating $\varphi_0$ and $\varphi_1$, so $M^{\wedge}$ is Hausdorff.

Now, we show that $\LCM_R$ is closed as well. Choose $M\in\LCM_R$.  Given any $\varphi\in M^{\wedge}$, the discreteness of $T$ (Lemma \ref{T discrete topology}) tells us that $\ker\varphi$ is clopen and hence by local compactness of $M$ we have an inclusion of subsets $U\leq Z\leq \ker\varphi$ with $U$ nonzero open and $Z$ compact. Dually, this gives an inclusion of subsets of $M^{\wedge}$

$$V(\ker\varphi,\{0\})\leq V(Z,\{0\})\leq V(U,\{0\}).$$

Clearly $\varphi\in V(\ker\varphi,\{0\})$, and so $V(Z,\{0\})$ is an open neighborhood of $f$. It remains to be shown that $V(U,\{0\})$ is compact. To do this, we note that $V(U,\{0\})=V(\tilde{U},\{0\})$ where $\tilde{U}$ is the submodule of $M$ generated by $U$. $\tilde{U}$ is open, because addition is continuous and so the open neighborhood of $0$ translates to open neighborhoods of every point of $\tilde{U}$.

Now, $V(\tilde{U},0)$ can be naturally identified with $(M/\tilde{U})^{\wedge}$. The topology on $M/\tilde{U}$ is discrete, and so the compact-open topology on $(M/\tilde{U})^{\wedge}$ is equal to the product topology. Additionally, $M/\tilde{U}$ is a torsion module since otherwise the sequence $(\pi^n x)_{n=1}^{\infty}$ would be non-constant convergent. Hence, $(M/\tilde{U})^{\wedge}$ is a subspace of the product $\prod_{x\in M/\tilde{U}}\pi^{-n_x}R/R$ where $n_x$ are chosen such that $\pi^{n_x}x=0$. Each $\pi^{-n_x}R/R\cong R/(\pi^{n_x})$ is finite, so compact, and thus by Tychonoff's theorem $\prod_{x\in M/\tilde{U}}\pi^{-n_x}R/R$ is compact as well. The subset consisting of homomorphisms $M/\tilde{U}\xrightarrow{} T$ is clearly closed, and hence $(M/\tilde{U})^{\wedge}$ is compact so we are done.
\end{proof}

The following lemma condenses the ``restrict to image and squeeze" procedure described in the introduction, which will be used keyly in the proofs of Theorem \ref{Equivalences theorem}.

\begin{lemma}\label{continuous injections} If $N\xhookrightarrow{}M$ is an injection in $\HM_R$ and $M$ is canonical, then so is $N$.
\end{lemma}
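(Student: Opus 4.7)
The plan is to show that the given topology on $N$ coincides with its canonical topology by squeezing the two against each other. Write $N_0$ for $N$ equipped with its canonical topology. By Corollary \ref{Lemma3}, the set-theoretic identity $N_0 \to N$ is automatically continuous, so the content of the lemma lies entirely in verifying that the identity map $N \to N_0$ is continuous as well.

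The key ingredient I would invoke is Proposition \ref{Lemma2}, which guarantees that every submodule of a canonical module, endowed with subspace topology, is itself canonical. Applying this to the image of the given injection $j\colon N \hookrightarrow M$, the subspace topology on $j(N) \subseteq M$ is canonical; since the canonical topology on any $R$-module is unique by construction, this subspace topology must coincide with the topology of $N_0$ under the algebraic identification $j(N) = N$.

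Now the continuity of $j\colon N \to M$, together with the standard fact that a continuous map into an ambient space is continuous into any subspace containing its image, produces a continuous map $N \to j(N)_{\mathrm{sub}} = N_0$, which on underlying sets is the identity. Together with Corollary \ref{Lemma3}, this yields mutually continuous identity maps between $N$ and $N_0$, forcing the two topologies to agree; hence $N$ is canonical.

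I do not anticipate any substantive obstacle here: the argument is a short sandwich, and all the heavy lifting has already been done in proving Proposition \ref{Lemma2} (subspaces of canonical modules are canonical) and Corollary \ref{Lemma3} (the canonical topology is the finest topological module structure). The role of this lemma is mainly packaging, so that later arguments can apply the ``restrict to image and squeeze'' maneuver advertised in the introduction without rederiving it each time.
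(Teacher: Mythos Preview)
Your proof is correct and follows essentially the same approach as the paper: restrict to the image, invoke Proposition~\ref{Lemma2} to conclude the subspace topology there is canonical, use continuity of the injection to see $N$ is finer than canonical, and then squeeze with Corollary~\ref{Lemma3}. The paper's version is terser but the logic is identical.
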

\begin{proof} Restricting the map we get a continuous bijection from $N$ to its image, which is canonical by Proposition \ref{Lemma2}. Continuity means that $N$ is finer than its image, and hence the topology on $N$ is finer than the canonical topology. By Corollary \ref{Lemma3} the canonical topology is the finest possible and hence $N$ must be canonical as well.
\end{proof}

\begin{lemma}\label{Lemma5} The dual of a module $M$ in canonical form is also in canonical form, and is $\HM_R$-isomorphic to $\varprojlim_{S\leq M}[S,T]$.
\end{lemma}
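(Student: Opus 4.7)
My plan has three stages: identify $M^\wedge$ with $\varprojlim_{S\le M}[S,T]$ as $R$-modules, upgrade this to a homeomorphism, and then deduce that the resulting topology on $M^\wedge$ is canonical. Since $M$ is canonical we may write $M=\varinjlim_{S\le M}S$ as topological modules, with each fin gen $S\le M$ carrying its unique topology (Proposition \ref{Lemma2}, Theorem \ref{fin gen uniqueness}). The universal property of this direct limit in $\HM_R$ produces a natural $R$-module bijection $[M,T]\to \varprojlim_S[S,T]$ via $\varphi\mapsto(\varphi|_S)_S$; compatibility of such a family is the only nontrivial condition, since continuity of any $\varphi_S$ is automatic by Theorem \ref{fully faithful}.

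Next I would verify this bijection is a homeomorphism. The inclusions $S\hookrightarrow M$ are continuous, so Proposition \ref{Dual continuity} yields continuous restriction maps $[M,T]\to[S,T]$, hence continuity of $[M,T]\to\varprojlim_S[S,T]$ into the inverse-limit topology. For the reverse continuity, it suffices to show each subbasic compact-open $V_M(Z,U)$ is inverse-limit-open. I would use the fact that every compact $Z\subseteq M$ lies inside some fin gen $S\le M$, a consequence of the $k_\omega$-space structure of $M$ invoked in Proposition \ref{Lemma2}: compact subsets of direct limits of compact Hausdorff subspaces are swallowed by one such subspace. With such an $S$, $V_M(Z,U)=V_S(Z,U)$ equals the preimage under the projection $\varprojlim_S[S,T]\to[S,T]$ of an open set, and hence is open.

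For canonicity of $M^\wedge$, I would compute each $[S,T]$ using Propositions \ref{First Full R Lemito} and \ref{Rhat isomorphism}: for $S\cong R^a\oplus\bigoplus_i R/(x_i)$ one gets $[S,T]\cong T^a\oplus\bigoplus_i R/(x_i)$, which is discrete. By Corollary \ref{Lemma3} the canonical topology on $[M,T]$ is always at least as fine as the compact-open topology, so I must establish the reverse comparison. The approach would be to show that every fin gen submodule $S'\le M^\wedge$ embeds into $\varprojlim [S,T]$ so as to receive its unique fin gen topology (Theorem \ref{fin gen uniqueness}), and then that the direct limit over all such $S'$ recovers the inverse-limit topology. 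I expect this last identification to be the main obstacle: there is no general principle that inverse limits of canonical (or even discrete) modules are canonical, so the argument must exploit features specific to our setting—the injective-lifting property of $T$ via Proposition \ref{T almost injective} and the cogenerator property of Corollary \ref{T cogenerator}—to pin down the fin gen submodules of the inverse limit and their subspace topologies.
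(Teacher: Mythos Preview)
Your first two stages are correct and take a more direct route to the homeomorphism $M^\wedge\cong\varprojlim_{S}[S,T]$ than the paper does. The paper only verifies forward continuity of $M^\wedge\to\varprojlim_S[S,T]$ and recovers the full homeomorphism at the very end, as a byproduct of already having canonicity in hand. Your observation that every compact $Z\subseteq M$ lies in some fin gen $S$---legitimate, since the fin gen submodules are compact (Lemma~\ref{compact complete}) and closed (Proposition~\ref{Lemma2}) and $M$ carries the colimit topology over them---gives reverse continuity immediately, so you obtain the second assertion of the lemma independently of the first. This buys you a cleaner separation of the two claims.

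The gap is exactly where you place it, and it is genuine: you have not given an argument that the inverse-limit topology on $\varprojlim_S[S,T]$ is canonical. That each fin gen $L\le M^\wedge$ inherits its unique topology is automatic from Theorem~\ref{fin gen uniqueness}; the content is that $\varinjlim_L L$ recovers the inverse-limit topology, and your sketch does not address this. The paper's device is to introduce, for each fin gen $L\le M^\wedge$ and each fin gen $S\le M$, the image $[S,T]_L$ of $L$ under the projection to $[S,T]$. Proposition~\ref{T almost injective} (the lifting property you anticipated) gives $\varinjlim_L[S,T]_L=[S,T]$ as discrete modules; completeness of $L$ together with Theorem~\ref{fin gen uniqueness} gives $L=\varprojlim_S[S,T]_L$ topologically. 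The paper then interchanges the two limit operations to identify $\varprojlim_S[S,T]$ with $\varinjlim_L L$ and squeezes against Corollary~\ref{Lemma3}. Be aware that the paper's stated justification for this interchange (commutation of limits) does not literally cover a limit--colimit swap, so if you follow this line you should supply a more careful argument at that step than the paper does.
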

\begin{proof} We write $M=\varinjlim_{S\leq M} S$, where for all limits we will be running over fin gen submodules. Given any fin gen submodule $L\leq M^{\wedge}$, we define $[S,T]_{L}$ to be the submodule of $[S,T]$ obtained by projecting $L$ onto $[S,T]$. There is a canonical set injection $\varinjlim_{L\leq M^{\wedge}} [S,T]_{L}\xrightarrow{}[S,T]$. To show this map is surjective, we choose any $\varphi\in[S,T]$. Since $M^{\wedge}\xrightarrow{} [S,T]$ is surjective (Proposition \ref{T almost injective}) we find $\tilde{\varphi}\in M^{\wedge}$ which restricts to $\varphi$, and hence letting $L$ be the submodule generated by $\tilde{\varphi}$ we find that $\varphi\in [S,T]_L$ so $\varphi$ is in the image.  Finitely generated modules are isomorphic to products of terms of the form $R$ or $R/(x)$, $x\neq 0$, and so their duals will be products of terms of the form $T$ or $R/(x)$, $x\neq0$ (by Lemma \ref{biproduct} and the computations of Propositions \ref{First Full R Lemito} and \ref{Rhat isomorphism}) which both are discrete. Hence both $[S,T]$ and $\varinjlim_{L\leq M^{\wedge}} [S,T]_{L}$ are discrete so the natural bijection between them is an $\HM_R$-isomorphism.

Next, the universal property of the inverse limit tells us the natural injection $L\xrightarrow{} \varprojlim [S,T]_{L}$ is continuous. Moreover, $[S,T]_L$ is defined to make the maps $L\xrightarrow{} [S,T]_L$ surjective and hence the image of $L$ is dense. Since $L$ is fin gen it must be complete (Proposition \ref{Lemma2}), meaning that so is its image and hence the map is surjective. Hence,  $L=\varprojlim [S,T]_{L}$ as modules and so $\varprojlim [S,T]_{L}$ is also fin gen since the only topological structures on fin gen modules are canonical by Theorem \ref{fin gen uniqueness}, and so we get that the equality is both algebraic and topological.

Finally, the universal property of the projective limit implies that $M^{\wedge}\xrightarrow{}\varprojlim_{S\leq M}[S,T]$ is continuous. Moreover, this map is a bijection because as a functor to the category of sets, the representable functor $\cdot \mapsto [\cdot,T]$ turns colimits into limits (see e.g., Proposition 2.9.4 of \cite{borceux1994handbook}).

\begin{align*}
\widehat{M}&\xrightarrow{}\varprojlim_{S\leq M}[S,T]\\
&=\varprojlim_{S\leq M}\varinjlim_{L\leq M^{\wedge}} [S,T]_{L}\\
&=\varinjlim_{L\leq M^{\wedge}}\varprojlim_{S\leq M}[S,T]\\
&=\varinjlim_{L\leq M^{\wedge}}L,\\
\end{align*}

where interchange of limits is valid since limits can always be interchanged when they exist (see \cite{borceux1994handbook} Proposition 2.12.1). The module $\varinjlim_{L\leq M^{\wedge}}L$ is algebraically equal to $M^{\wedge}$ but given canonical topology, and so we get that $M^{\wedge}$ has a finer topology than the canonical topology. By Corollary \ref{Lemma3} we know that the canonical topology is the finest possible, and hence the map must be a homeomorphism. Additionally, for the compositum to be a homeomorphism we must have that $M^{\wedge}\xrightarrow{}\varprojlim_{S\leq M}[S,T]$ is a homeomorphism so we get the second claim as well.
\end{proof}

We can now prove the hard part of Theorem \ref{Topology Forcing}, as described in the introduction:

\begin{proposition}\label{Lemma6} If $M\in\HM_R$ is such that the double dual map is injective and continuous, then $M$ is canonical.
\end{proposition}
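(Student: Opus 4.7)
The plan is to execute the ``restrict to image and squeeze'' procedure outlined in the introduction, leaning on the fact (Lemma \ref{Lemma5}) that the dual of a canonical module is canonical, together with the ability to pull canonicity back along continuous injections (Lemma \ref{continuous injections}). First I would write $M_0$ for the algebraic module $M$ equipped with the canonical topology; by Corollary \ref{Lemma3} the identity $M_0 \to M$ is a continuous $\HM_R$-morphism, so Proposition \ref{Dual continuity} produces a continuous precomposition map $M^\wedge \to M_0^\wedge$. I would then observe that this map is injective, since any two distinct elements of $M^\wedge$ remain distinct as $\HM_R$-morphisms $M_0 \to T$ (the underlying set-level map is the identity, so precomposition with it does not collapse any morphisms).

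The first substantive step is to deduce that $M^\wedge$ itself carries the canonical topology. Lemma \ref{Lemma5} tells us that $M_0^\wedge$ is canonical, so the continuous injection $M^\wedge \hookrightarrow M_0^\wedge$ exhibits $M^\wedge$ as an $\HM_R$-injection into a canonical module. Lemma \ref{continuous injections} then gives that $M^\wedge$ is canonical. I would then iterate: a second application of Lemma \ref{Lemma5} gives that $M^{\wedge\wedge}$ is canonical. The hypothesis supplies a continuous injection $M \hookrightarrow M^{\wedge\wedge}$, and a final application of Lemma \ref{continuous injections} forces $M$ to be canonical, which is the desired conclusion.

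The main obstacle is conceptual rather than technical: one has to recognize that the conclusion can be reached by two applications of the same squeeze, once at the level of $M^\wedge$ and then again at the level of $M$, with Lemma \ref{Lemma5} serving as the bridge that propagates canonicity through the duality operation. Once the structure of the argument is identified, no new estimates or constructions are needed, since all the ingredients, particularly the characterization of canonical topology as the finest $\HM_R$-topology that underlies Lemma \ref{continuous injections}, are already in place in this section.
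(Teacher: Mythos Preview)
Your proposal is correct and follows essentially the same route as the paper's proof: identity $M_0\to M$ continuous, dualize to a continuous injection $M^{\wedge}\hookrightarrow M_0^{\wedge}$, apply Lemma~\ref{Lemma5} and Lemma~\ref{continuous injections} to make $M^{\wedge}$ canonical, iterate to make $M^{\wedge\wedge}$ canonical, then use the hypothesis and one more application of Lemma~\ref{continuous injections}. Your explicit justification of injectivity of the precomposition map (via the identity on underlying sets) is a small clarification the paper leaves implicit.
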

\begin{proof} Choose $M\in \LCM_R$, and let $M_0$ be the same module equipped with canonical topology. By Corollary \ref{Lemma3} we have a continuous injective identity map $M_0\xrightarrow{}M$, yielding a continuous injective precomposition map $M^{\wedge}\hookrightarrow{}M_0^{\wedge}$ by Proposition \ref{Dual continuity}. Proposition \ref{Lemma5} states that $M_0^{\wedge}$ is canonical and hence by Lemma \ref{continuous injections} so is $M^{\wedge}$, and so again by Proposition \ref{Lemma5} $M^{\wedge\wedge}$ is canonical. Thus $M$ has a continuous injection into a canonical module, and so by Lemma \ref{continuous injections} we conclude the result.
\end{proof}

To complete the proof of Theorem \ref{Topology Forcing} all we need now is injectivity and continuity, which we deduce in a final lemma:

\begin{lemma}\label{Double dual injective} The double dual map is injective and continuous for all elements of $\LCM_R$.
\end{lemma}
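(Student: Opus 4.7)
The plan is to split the proof into the injectivity statement and the continuity statement, since the two rely on quite different structural facts about $\LCM_R$.

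For injectivity, I would appeal directly to Corollary \ref{T cogenerator}. Given a nonzero $m \in M$, that corollary produces a $\varphi \in M^{\wedge}$ with $\varphi(m) \neq 0$ (via the canonical map $R \cdot m \to T$ sending $m \mapsto \pi^{-1}$ and then lifting to $M$ using Proposition \ref{T almost injective}). Hence the image of $m$ under the double dual map is nonzero, so the kernel is trivial.

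For continuity, the key intermediate step is to show that the evaluation pairing $e: M \times M^{\wedge} \to T$, $(m, \varphi) \mapsto \varphi(m)$, is continuous. Fix a point $(m_0, \varphi_0)$ and set $t = \varphi_0(m_0)$. Since $T$ is discrete by Lemma \ref{T discrete topology}, it suffices to produce an open neighborhood of $(m_0, \varphi_0)$ whose image is $\{t\}$. Continuity of $\varphi_0$ makes $\varphi_0^{-1}(\{t\})$ open in $M$, and local compactness of $M$ provides a compact neighborhood $C$ of $m_0$ contained in $\varphi_0^{-1}(\{t\})$. Then $\varphi_0(C) \subseteq \{t\}$, so $\varphi_0 \in V_M(C, \{t\})$, and on the open neighborhood $C^{\circ} \times V_M(C, \{t\})$ of $(m_0, \varphi_0)$ the map $e$ is constantly equal to $t$.

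Given continuity of $e$, the continuity of the double dual map follows from the tube lemma. A subbasic open set of $M^{\wedge\wedge}$ has the form $V_{M^{\wedge}}(K, U)$ for $K \subseteq M^{\wedge}$ compact and $U \subseteq T$ open, and its preimage under the double dual map is exactly $\{m \in M : \{m\} \times K \subseteq e^{-1}(U)\}$. Since $e^{-1}(U)$ is open in $M \times M^{\wedge}$ and $K$ is compact, the tube lemma guarantees this set is open in $M$, which is what we wanted.

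The main obstacle is the continuity of the evaluation pairing, and this is precisely the step that forces us to use the hypothesis that $M$ is locally compact rather than merely Hausdorff: without the ability to fit a compact neighborhood of $m_0$ inside the open set $\varphi_0^{-1}(\{t\})$, we cannot produce the open set $V_M(C, \{t\}) \subseteq M^{\wedge}$ that witnesses the local constancy of $e$ at $(m_0, \varphi_0)$. Everything else in the argument is formal bookkeeping about the compact-open topology together with the discreteness of $T$ proved in Lemma \ref{T discrete topology}.
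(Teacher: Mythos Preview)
Your proof is correct. The injectivity argument is identical in spirit to the paper's: both reduce to the fact that $T$ separates points of $M$, which is Corollary~\ref{T cogenerator} together with the lifting in Proposition~\ref{T almost injective}.

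For continuity the two arguments are really the same computation, just packaged differently. The paper works directly: given $m$ in the preimage of $V_{M^{\wedge}}(Z;U)$, it fixes a compact neighborhood $\mathcal{L}$ of $m$, forms $L_{m,\varphi}=\mathcal{L}\cap\varphi^{-1}(U)$ for each $\varphi\in Z$, covers $Z$ by the sets $V_M(L_{m,\varphi},U)$, and extracts a finite subcover to produce a finite intersection $\bigcap_i L_{m,\varphi_i}$ sitting inside the preimage. Your version isolates the first half of this as ``the evaluation pairing $e:M\times M^{\wedge}\to T$ is continuous'' (your construction of $C$ is exactly the paper's construction of $L_{m,\varphi}$), and then recognizes the second half as an instance of the tube lemma. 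What you gain is modularity: the continuity of evaluation for locally compact $M$ is a statement of independent interest, and once it is in hand the tube lemma finishes things with no further bookkeeping. What the paper's inline version gains is self-containment --- no appeal to a named lemma. Neither approach avoids the essential use of local compactness, which in both cases is needed to fit a compact neighborhood of $m$ inside the open set $\varphi^{-1}(U)$.
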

\begin{proof} Denote the double dual map $M\xrightarrow{}M^{\wedge\wedge}$ by $d$. The kernel of $d$ is $\bigcap_{\varphi\in M^{\wedge}}\ker\varphi$. If this intersection were nonzero then by Corollary \ref{T cogenerator} it would have a nonzero $\HM_R$-morphism into $T$. Lifting such a morphism with Proposition \ref{T almost injective} we could thus find a morphism from $M$ which does not act my zero on the intersection of kernels, which is a contradiction and hence $d$ is injective as desired.

To show that this map is continuous we chose subsets $Z\subseteq M^{\wedge}$ compact and $U\subseteq T$ open, so that $V_{M^{\wedge}}(Z,U)$ is an open set in $M^{\wedge\wedge}$, and we compute

\begin{align*}
d^{-1}\left(V_{M^{\wedge}}(Z;U))\right)&=\left\{ m\in M\st d(m)\in V_{M^{\wedge}}(Z;U)\right\}\\
&=\left\{m\in M\st d(m)(Z)\subseteq U\right\}\\
&=\left\{m\in M\st \varphi(m)\in U\,\, \forall \varphi\in Z\right\}\\
&=\bigcap_{\varphi\in Z}\varphi^{-1}(U).
\end{align*}

Choose now $m\in \bigcap_{\varphi\in Z}\varphi^{-1}(U)$, and $\mathcal{L}\ni m$ a compact neighborhood. Since $T$ is discrete by Lemma \ref{T discrete topology}, $U$ is clopen and hence so is $\varphi^{-1}(U)$. Thus, for all $\varphi\in Z$ the set $L_{m,\varphi}:=\mathcal{L}\cap \varphi^{-1}(U)$ is a compact neighborhood of $m$. The open sets $V_{M}(L_{m,\varphi},U)\ni \varphi$ cover $Z$ as $\varphi$ varies, so choosing a finite subcover we find a finite set $\varphi_1...\varphi_n$ so that

$$\bigcap_{\varphi\in Z}\varphi^{-1}(U)\supseteq \bigcap_{i=1}^{n}\bigcap_{\varphi\in V_{M}(L_{m,\varphi_i},U)}\varphi_i^{-1}(U)\supseteq \bigcap_{i=1}^{n}L_{m,\varphi_i}.$$

This last expression contains a finite intersection of open neighborhoods of $m$, and hence it is a neighborhood of $m$, so we conclude the result.
\end{proof}

\begin{proof}[Proof of Theorem \ref{Topology Forcing}] Let $M\in \LCM_R$. By Lemma \ref{Double dual injective} the double dual map is injective and continuous. By Lemma \ref{Lemma4} this means $M$ is canonical. Since the canonical topology is unique we are done.
\end{proof}

Compiling results we can now prove the main result of this text:

\begin{proof}[Proof of Theorem \ref{Equivalences theorem}] The equivalence $(1)\iff(2)$ is given in Corollary \ref{Lemma3}. Every module for which the double dual map is injective and continuous is canonical by Proposition \ref{Lemma6}. The converse of this statement is covered by Lemma \ref{Lemma4}, hence there is a unique topology for which the double dual map is injective and continuous, hence the topology in point (3) is well defined and equivalent to (1) and (2).
\end{proof}

\section{Global to Local}
\label{Global to Local}

We define $\TT_R:=\left[R,\RR/\ZZ\right]_{\ZZ}$ to be the analytic analogue of $\RR/\ZZ$ for $R$. In this section we prove Theorem \ref{Flood isomorphism theorem}, that $\TT_R\cong T$, as well as the adjointness result of \cite{flood1979pontryagin} which allows us to deduce Theorem \ref{Local Duality Theorem}. The proposition below works more generally for non-commutative rings when one keeps track of left vs right ideals, but we do not prove it here.

\begin{proposition}\label{Flood thing} Let $A$ be a locally compact topological ring. The functor $\HM_{\ZZ}\xrightarrow{} \HM_{A}$ defined by sending $M\in\HM_{\ZZ}$ to the space $[A,M]_\ZZ$ equipped with $A$-module structure $(r\cdot \varphi)(s)=\varphi(r\cdot s)$ is a right adjoint to the forgetful functor $\HM_{A}\xrightarrow{}\HM_{\ZZ}$. Even more strongly, for any $M\in \HM_{A}$ and $G\in \HM_{\ZZ}$ we have that the map

\begin{align*}
\left[M,[A,G]_{\ZZ}\right]_{A}&\xrightarrow{}[M,G]_{\ZZ}\\
\Phi &\mapsto \left(m\mapsto \Phi(m)(1)\right)
\end{align*}

is a well defined $\HM_A$-isomorphism.
\end{proposition}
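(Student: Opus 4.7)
My plan is to exhibit an explicit inverse to the given map and then verify both bijectivity and bicontinuity. The candidate inverse
$$\Psi: [M,G]_{\ZZ} \longrightarrow [M,[A,G]_{\ZZ}]_{A}$$
sends $\psi$ to the map $\Phi_\psi$ defined by $\Phi_\psi(m)(a) = \psi(a\cdot m)$. I would first check that $\Phi_\psi(m): A \to G$ is a continuous $\ZZ$-module map (it is the composition of the continuous scalar multiplication $a\mapsto a\cdot m$ with $\psi$), and that $\Phi_\psi: M\to [A,G]_{\ZZ}$ is $A$-linear, where commutativity of $A$ is used to see that $\Phi_\psi(rm)(a)=\psi(arm)=\psi(ram)=(r\cdot \Phi_\psi(m))(a)$. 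Checking that the two constructions are mutually inverse bijections is then purely algebraic: for $\Phi \in [M,[A,G]_{\ZZ}]_{A}$, $A$-linearity gives $\Phi(m)(a)=(a\cdot \Phi(m))(1)=\Phi(a\cdot m)(1)$, which is exactly the identity needed to recover $\Phi$ from its image under the forward map.

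Before invoking the compact-open topology on $[M,[A,G]_{\ZZ}]_{A}$, I need to verify that $[A,G]_{\ZZ}$ with the stated $A$-action is actually an object of $\HM_A$. Hausdorffness (from Hausdorffness of $G$) and continuity of addition and inversion are standard for the compact-open topology. The nontrivial step is continuity of the scalar multiplication $A\times [A,G]_{\ZZ}\to [A,G]_{\ZZ}$: given a subbasic open $V(Z,U)$ and a point $(r_0,\varphi_0)$ in its preimage, I would use local compactness of $A$ to enclose the compact set $r_0\cdot Z$ in an open set $W'$ with $\overline{W'}$ compact and contained in $\varphi_0^{-1}(U)$, and then use joint continuity of multiplication together with compactness of $Z$ to find a neighborhood $U_0\ni r_0$ with $U_0\cdot Z\subseteq W'$. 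The product $U_0 \times V(\overline{U_0\cdot Z},U)$ is then the required open neighborhood of $(r_0,\varphi_0)$ inside the preimage.

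Next I would handle continuity of the two hom-space maps. The forward direction is cheap: the preimage under $\Phi\mapsto (m\mapsto \Phi(m)(1))$ of a subbasic open $V_{[M,G]_{\ZZ}}(Z',U')$ is exactly $V_{[M,[A,G]_{\ZZ}]_{A}}(Z',\, V_{[A,G]_{\ZZ}}(\{1\},U'))$, which is again subbasic open. For the continuity of $\Psi$, the key computation is that the preimage of $V_{[M,[A,G]_{\ZZ}]_{A}}(Z',\, V_{[A,G]_{\ZZ}}(Z,U))$ is $V_{[M,G]_{\ZZ}}(Z\cdot Z',U)$, where $Z\cdot Z'$ is the image of the compact set $Z\times Z'$ under the scalar multiplication $A\times M\to M$, hence itself compact.

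The main obstacle is navigating the two layers of compact-open topology and keeping clear which space one is computing in at each stage. Continuity of $\Psi$ in particular rests on the compactness of $Z\cdot Z'$, which uses joint continuity of scalar multiplication on $M$; continuity of the $A$-action on $[A,G]_{\ZZ}$ uses local compactness of $A$ in an essential way, and is the only place where the local-compactness hypothesis enters. Once the hom-isomorphism $[M,[A,G]_{\ZZ}]_{A}\cong [M,G]_{\ZZ}$ has been established, naturality in $M$ (by precomposition) and in $G$ (by postcomposition inside the inner hom) is a routine diagram chase that upgrades the bijection to the asserted adjunction.
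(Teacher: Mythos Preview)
Your approach is essentially the same as the paper's: exhibit the explicit inverse $\psi\mapsto\bigl(m\mapsto(a\mapsto\psi(a\cdot m))\bigr)$, verify the two maps are mutual inverses algebraically, and check bicontinuity by computing preimages of subbasic compact-open sets, the key point being that $Z\cdot Z'$ is compact as the image of $Z\times Z'$ under scalar multiplication. If anything you are more careful than the paper, which does not pause to verify that the $A$-action on $[A,G]_{\ZZ}$ is continuous (your use of local compactness of $A$ here is correct and is indeed where that hypothesis is needed) and does not spell out naturality for the adjunction statement.
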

\begin{proof} We denote the above defined map as $i$. Given any two $A$-modules, we use $V_{\cdot \to \cdot}(Z,U)$ to denote the subset of maps $\varphi$ between them such that $\varphi\left(Z\right)\subseteq U$. We begin by showing that $i(\Phi)$ is continuous whenever $\Phi$ is, implying that $i$ is well defined. To do this, we consider an open set $U\subseteq G$ and compute

\begin{align*}
i(\Phi)^{-1}\left(U\right)&=\left\{m\in M\st \Phi(m)(1)\in U\right\}\\
&=\left\{m\in M \st \Phi(m)\in V_{A\to G}(\{1\},U)\right\}\\
&=\Phi^{-1}\left(V_{A\to G}\left(\{1\},U\right)\right),
\end{align*}

which is open since $V_{A\to G}\left(\{1\},U\right)$ is an open set in $\left[A,G\right]_{\ZZ}$. We now show that $i$ itself is continuous. To do this, we consider additionally a compact set $Z\subseteq M$ and compute

\begin{align*}
i^{-1}\left(V_{M\to G}\left(Z,U\right)\right)&=\left\{\Phi \st \Phi(m)\in V_{M\to G}\left(Z,U\right)\right\}\\
&=\left\{\Phi \st \Phi(Z)(1)\subseteq U\right\}\\
&=\left\{\Phi \st \Phi(Z)\subseteq V_{M\to G}\left(\{1\},U\right)\right\}\\
&=V_{M\to [A,G]_{\ZZ}}\left(Z,V_{M\to G}\left(\{1\},U\right)\right)
\end{align*}

which is clearly open in the compact-open topology. To show that $i$ is an isomorphism we define the map:

\begin{align*}
[M,G]_{\ZZ}&\xrightarrow{j}\left[M,[A,G]_{\ZZ}\right]_{A}.\\
\varphi &\mapsto \left(m\mapsto \left(a\mapsto \varphi(a\cdot m)\right)\right)
\end{align*}

We now show that $j$ is a continuous inverse to $i$. Functionals in the image of $j$ are indeed continuous since they are the composition of multiplication, which is continuous by the definition of a topological module, with a continuous $\ZZ$-module homomorphism. Hence, the map is well defined. It is an $A$-module homomorphism for trivial reasons. To show that $j$ is continuous, we choose a compact set $Z\subseteq M$ and an open set $V_{A\to G}(L,U)$ in $[A,G]_{\ZZ}$ and compute

\begin{align*}
j^{-1}\left(V_{M\to [A,G]_\ZZ}(Z,V_{A\to G}(L,U))\right)&=\left\{\varphi\in [M,G]_{\ZZ}\st \varphi(L\cdot Z)\in U\right\}
\end{align*}

where $L\cdot Z$ is the set of elements of the form $a\cdot z$, $a\in A$ and $z\in Z$. For this preimage to be open it remains to show that $L\cdot Z$ is compact. Seeing as $L\cdot Z$ is the image of $L\times Z$ under the multiplication map $A\times M \xrightarrow{} M$, and that both the product and image of compact sets under continuous maps is compact, the compactness of $L\cdot Z$ is clear. Hence, $j$ is continuous.

Thus, all that remains is to show that $i$ and $j$ are inverses. This follows from the computations

\begin{align*}
(i \circ j)\left(\varphi\right)&=\left(m\mapsto \left(m\mapsto \left(a\mapsto \varphi(a\cdot m)\right)\right)(m)(1)\right)\\
&=\left(m\mapsto \left(a\mapsto \varphi(a\cdot m)\right)(1)\right)\\
&=\left(m\mapsto \varphi(m)\right)\\
&=\varphi
\end{align*}

and

\begin{align*}
(j \circ i)\left(\Phi\right)&=\left(m\mapsto \left(a\mapsto \Phi(a\cdot m)(1)\right)\right)\\
&=\left(m\mapsto \left(a\mapsto \Phi(m)(a)\right)\right)\\
&=\left(m\mapsto \Phi(m)\right)\\
&=\Phi.
\end{align*}

We have thus demonstrated the result.

\end{proof}

Clearly, we derive Theorem \ref{simplified adjointness} as a corollary.

To begin the proof of Theorem \ref{Flood isomorphism theorem} we treat the equal characteristic case, that $R\cong \FFqx$ for some finite field $\FF_q$.

\begin{lemma} For all prime powers $q=p^{e}$, we have that $[\FF_q,\RR/\ZZ]_\ZZ\cong \FF_q$ as topological $\FF_q$-modules.
\end{lemma}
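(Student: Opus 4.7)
The plan is to use a direct cardinality argument, exploiting the fact that $\FF_q$ is finite. First I would observe that $\FF_q$, as an abelian group, is $(\ZZ/p)^e$ where $q = p^e$, so every element has additive order dividing $p$. Consequently any $\ZZ$-linear map $\varphi : \FF_q \to \RR/\ZZ$ has image contained in the $p$-torsion subgroup $\tfrac{1}{p}\ZZ/\ZZ \cong \ZZ/p$. Therefore $[\FF_q,\RR/\ZZ]_\ZZ \cong \Hom_\ZZ\bigl((\ZZ/p)^e,\ZZ/p\bigr)$, which has exactly $p^e = q$ elements.

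Next I would check that $[\FF_q,\RR/\ZZ]_\ZZ$ is nonzero as an abelian group, which is easy: composing the trace $\FF_q \to \FF_p$ with the embedding $\FF_p \hookrightarrow \tfrac{1}{p}\ZZ/\ZZ \subset \RR/\ZZ$ gives an explicit nontrivial element. Now apply Proposition \ref{Flood thing} (or rather the $\FF_q$-module structure $(r \cdot \varphi)(s) = \varphi(rs)$ defined there) to view $[\FF_q,\RR/\ZZ]_\ZZ$ as an $\FF_q$-module. Since $\FF_q$ is a field, an $\FF_q$-module is just an $\FF_q$-vector space, and a nonzero $\FF_q$-vector space has cardinality at least $q$. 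Combined with the upper bound of $q$ elements computed above, this forces the dimension to equal $1$, so $[\FF_q,\RR/\ZZ]_\ZZ \cong \FF_q$ as $\FF_q$-modules.

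Finally, for the topological part, both $\FF_q$ and $[\FF_q,\RR/\ZZ]_\ZZ$ are finite. The module $\FF_q$ is given the unique topology from Theorem \ref{fin gen uniqueness}, namely the discrete one. The dual $[\FF_q,\RR/\ZZ]_\ZZ$ with the compact-open topology is Hausdorff (by the $\ZZ$-module analogue of Proposition \ref{LCM duality closed}, or simply because distinct homomorphisms can be separated by evaluation at a point), hence finite Hausdorff, hence discrete. Any bijection between two finite discrete spaces is a homeomorphism, so the algebraic isomorphism is also a topological one.

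The main obstacle, if any, is purely bookkeeping: confirming that the counting of $\ZZ$-linear maps into $\RR/\ZZ$ lands in $\tfrac{1}{p}\ZZ/\ZZ$, and checking that the $\FF_q$-scalar action from Proposition \ref{Flood thing} really does interact with this counting to give a one-dimensional $\FF_q$-vector space rather than merely a group of the right cardinality. But since $\FF_q$ is a field, nonzero plus cardinality $q$ immediately pins down the dimension, so no subtle argument is needed.
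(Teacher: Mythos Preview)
Your proposal is correct and follows essentially the same approach as the paper: count that $[\FF_q,\RR/\ZZ]_\ZZ$ has exactly $q$ elements (via the $p$-torsion observation), invoke that $\FF_q$-modules are vector spaces classified by their cardinality, and finish with finite Hausdorff $\Rightarrow$ discrete. Your explicit trace construction to witness nontriviality is a harmless extra, since the cardinality count already gives $q>1$ elements; and your appeal to Theorem~\ref{fin gen uniqueness} for the topology on $\FF_q$ is slightly misplaced (that theorem concerns $R$-modules for a compact DVR $R$), but the conclusion that a finite set carries the discrete topology needs no such machinery anyway.
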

\begin{proof} Seeing as $\FF_q$ is a free $\FF_p$-module, maps in $[\FF_q,\RR/\ZZ]_\ZZ$ is defined by choosing a basis and freely assigning each basis vector an element of $\frac{1}{p}\ZZ/\ZZ$. Hence $[\FF_q,\RR/\ZZ]_\ZZ$ has $q$ elements. Vector spaces ($\FF_q$ modules) are classified by their order, and hence we get that $[\FF_q,\RR/\ZZ]_\ZZ\cong \FF_q$ as an $\FF_q$ module. The topology is Hausdorff, and since the only finite Hausdorff spaces are discrete this map is a homomorphism as well so we are done.
\end{proof}

\begin{proposition}\label{equal char} Choose a prime power $q=p^{e}$. Fix an $\FF_q$-module isomorphism $[\FF_q,\RR/\ZZ]_\ZZ\xrightarrow{i} \FF_q$. We now define a map

\begin{align*}
[\FFqx,\RR/\ZZ]_\ZZ&\xrightarrow{\ell} \FF_q((x))/\FFqx.\\
\varphi&\mapsto \sum_{n=0}^{\infty}i\left(\left.\varphi\right|_{\FF_q \cdot x^n}\right) x^{-(n+1)}
\end{align*}

All but finitely many of the terms in the summation are zero. In particular, $\ell$ is well defined. Moreover, $\ell$ is an $\HM_{\FFqx}$-isomorphism.
\end{proposition}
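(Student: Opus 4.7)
The overall plan is to reduce everything to a verification on the polynomial subring $\FF_q[x] \subseteq \FFqx$ by first establishing that both sides of $\ell$ are discrete. The codomain $\FF_q((x))/\FFqx$ is precisely $T$ in the case $R = \FFqx$, hence discrete by Lemma \ref{T discrete topology}. For the domain, I would observe that any $\varphi \in [\FFqx,\RR/\ZZ]_\ZZ$ has image contained in the $p$-torsion subgroup $\frac{1}{p}\ZZ/\ZZ$ of $\RR/\ZZ$ because $\FFqx$ has characteristic $p$; choosing a neighborhood $U_0 \subseteq \RR/\ZZ$ of $0$ meeting this subgroup only at $0$, the set $V(\FFqx, U_0) = \{0\}$ is open, and translating via the topological $\FFqx$-module structure provided by Proposition \ref{Flood thing} forces discreteness of the whole space. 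With both sides discrete, it suffices to verify that $\ell$ is a well-defined $\FFqx$-linear bijection.

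Well-definedness of the defining sum is then immediate: continuity of $\varphi$ together with discreteness of its image forces $\ker \varphi$ to be open, hence to contain some $x^N \FFqx$, so $\varphi|_{\FF_q \cdot x^n} = 0$ for $n \geq N$ and the sum truncates. For injectivity, if $\ell(\varphi) = 0$ then $\varphi$ vanishes on every $\FF_q \cdot x^n$, hence on their span $\FF_q[x]$, and continuity plus density of $\FF_q[x]$ in $\FFqx$ yields $\varphi = 0$. For surjectivity, given $f = \sum_{n=0}^{N} a_n x^{-(n+1)}$, I would use $i^{-1}(a_n)$ together with the $\FF_q$-isomorphism $\FF_q \cdot x^n \cong \FF_q$ to define $\ZZ$-module maps $\psi_n : \FF_q \cdot x^n \to \RR/\ZZ$, assemble them on the finite discrete quotient $\FFqx/x^{N+1}\FFqx$, and pull back to a continuous $\varphi : \FFqx \to \RR/\ZZ$; unwinding the definitions gives $\ell(\varphi) = f$.

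The bulk of the work is $\FFqx$-linearity. I would first verify it on the two topological generators of $\FFqx$: for $c \in \FF_q$, the identity $(c\varphi)(a x^n) = \varphi(ca \cdot x^n)$ combined with $\FF_q$-linearity of $i$ gives $\ell(c\varphi) = c \cdot \ell(\varphi)$; for multiplication by $x$, the shift $(x\varphi)|_{\FF_q \cdot x^n} = \varphi|_{\FF_q \cdot x^{n+1}}$ yields $\ell(x\varphi) = \sum_{m \geq 1} i(\varphi|_{\FF_q \cdot x^m}) x^{-m}$, which equals $x \cdot \ell(\varphi)$ once one observes that the $m=0$ term of $x \cdot \ell(\varphi)$ lies in $\FFqx$ and hence vanishes in the quotient. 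Promoting $\FF_q[x]$-linearity to full $\FFqx$-linearity is then automatic, because for any fixed $\varphi$ the action $r \mapsto r \cdot \varphi$ is locally constant: once $\varphi(x^N \FFqx) = 0$, every element of the coset $r + x^N \FFqx$ acts on $\varphi$ identically, and $\FF_q[x]$ is dense in $\FFqx$. The main obstacle is organizing the index bookkeeping in the shift computation cleanly so that the overflow at the boundary is absorbed by the quotient by $\FFqx$ exactly as expected; each ingredient itself is elementary.
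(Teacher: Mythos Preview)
Your proposal is correct, and it follows the same overall skeleton as the paper: verify finiteness of the sum via continuity and the fact that the image lands in $\frac{1}{p}\ZZ/\ZZ$, check additivity, check $R$-linearity on monomials and pass to the limit, and argue injectivity/surjectivity by looking at the components $\FF_q\cdot x^n$.

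The one genuinely different move is the homeomorphism step. The paper establishes that $\TT_{\FFqx}$ is discrete by first invoking classical Pontryagin duality to see that $\TT_{\FFqx}$ is locally compact, and then appealing to Theorem~\ref{Topology Forcing} to force the topology to agree with the discrete one on $T$. Your route is more direct and self-contained: since $\FFqx$ itself is compact, the basic compact-open set $V(\FFqx,U_0)$ is open, and the observation that every character lands in the finite set $\frac{1}{p}\ZZ/\ZZ$ pins this set down to $\{0\}$. This avoids both Pontryagin duality and the topology-forcing machinery, at the cost of being specific to the compact case. You are also more careful than the paper about promoting linearity from $\FF_q[x]$ to $\FFqx$: the paper simply asserts that monomials ``generate $\FFqx$ as a $\ZZ$-module'', which is only true topologically, whereas your local-constancy argument (once $\varphi$ kills $x^N\FFqx$, the action of $r$ on $\varphi$ depends only on $r\bmod x^N$) makes this passage explicit.
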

\begin{proof} The elements in $\FF_q \cdot x^{n}$ uniformly approach zero in $\FFqx$ as $n\to\infty$. Thus, since the image under $\varphi$ of every element lies in the discrete subring $\frac{1}{p}\ZZ/\ZZ$ we must have $\varphi\left(\FF_q\cdot x^{n}\right)=0$ for all large $n$, and hence all but finitely many of the terms in the summation are zero. We now show that $\ell$ satisfies the axioms of a $\HM_{\FFqx}$-isomorphism:

\begin{itemize}
\item $\ell(\varphi_0+\varphi_1)=\ell(\varphi_0)+\ell(\varphi_1)$: This is clear, since $\varphi$ are $\ZZ$-module homomorphisms, and summation commutes with finite sums.
\item $\ell(r\cdot \varphi)=r\cdot \ell(\varphi)$: We first prove this for the case that $r=h\cdot x^{k}$, $h\in \FF_q$. Namely, we examine the following computation:

\begin{align*}
\ell\left((h\cdot x^{k})\cdot \varphi\right)&=\sum_{n=0}^{\infty}i\left(\left.((h\cdot x^{k})\cdot \varphi)\right|_{\FF_q \cdot x^n}\right) x^{-(n+1)}\\
&=\left(h\cdot x^{k}\right)\cdot \sum_{n=0}^{\infty}i\left(\left.\varphi\right|_{\FF_q \cdot x^{n+k}}\right) x^{-(n+k+1)}\\
&=\left(h\cdot x^{k}\right)\cdot \sum_{n=k}^{\infty}i\left(\left.\varphi\right|_{\FF_q \cdot x^{n}}\right) x^{-(n+1)}\\
&=\left(h\cdot x^{k}\right)\cdot \ell\left(\varphi\right).
\end{align*}

The last equality is true in $\FF_q((x))/\FFqx$ since all of the terms $n<k$ are $0$ when multiplied by $x^k$, but is certainly false in $\FF_q((x))$. Seeing as elements of the form $h\cdot x^{k}$ generate $\FFqx$ as a $\ZZ$-module we are done.

\item Injectivity: Every element of $\FF((x))/\FFqx$ can be uniquely written in the form $\sum_{n=0}^{\infty} c_n\cdot x^{-(n+1)}$. Thus, two maps sending to the same element must satisfy $\left.\varphi_0\right|_{\FF_q \cdot x^{n}}=\left.\varphi_1\right|_{\FF_q\cdot x^{n}}$ for all $n$ since $i$ is injective. Since the submodules $\FF_q \cdot x^{n}$ generate $\FFqx$ this implies $\varphi_0=\varphi_1$ and so we are done.

\item Surjectivity: Since the submodules $\FF_q \cdot x^{n}$ freely generate $\FFqx$ as a $\ZZ$-module, we can make a well defined map $\FFqx\xrightarrow{} \RR/\ZZ$ by independently choosing the behavior for each submodule $\FF_q \cdot x^{n}$. The condition that all but finitely many terms being zero is sufficient to guarantee continuity, and hence every element of $\FF_q((x))/\FFqx$ can be represented as the image of a continuous map, and hence $\ell$ is surjective.

\item Homeomorphism: By Theorem \ref{Pontryagin Duality Theorem}, the Pontryagin dual of any locally compact space is locally compact. Hence, by the above discussion, $\TT_{\FFqx}$ and $T$ are algebraiclly equal. Since $T$ is discrete by Lemma \ref{T discrete topology} and there is at most one locally compact topology on each algebraic module by Theorem \ref{Topology Forcing}, we get that $\TT_{\FFqx}$ is discrete as well so the map is a homeomorphism.
\end{itemize}

The above statements are enough to conclude that $\ell$ is an $\HM_{\FFqx}$-isomorphism, and hence we are done.
\end{proof}

Next we prove the mixed characteristic case in a non-constructive manner, remembering that $K$ is fixed as the fraction field of $R$:

\begin{lemma} \label{torsion cardinalities} If $K$ has characteristic not equal to the characteristic of the residue field of $R$, then the module $\TT_R$ is $p^{\infty}$-torsion, in the sense that each element is annihilated by $p^n$ for large enough $n$. Moreover, using $\# S$ to denote the number of elements in a set $S$ and using $M\left[p^n\right]$ to denote the $p^n$-torsion of a $\ZZ_p$-module $M$, we have that

$$\# \TT_R[p^n] = \# T[p^n]$$

for all $n\geq 0$.
\end{lemma}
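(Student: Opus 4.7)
First I would set up the mixed-characteristic structure. Since the residue field of $R$ has characteristic $p$, the local ring $R$ itself has characteristic either $0$ or $p$; the hypothesis $\Char K \neq p$ rules out the latter, so $\Char R = 0$. Completeness of $R$ then upgrades the inclusion $\ZZ \hookrightarrow R$ to an embedding $\ZZ_p \hookrightarrow R$, and by the classification of complete DVRs cited in the introduction, $R = \mathcal{O}_K$ for a finite extension $K/\QQ_p$ and is free of rank $d := [K:\QQ_p]$ as a $\ZZ_p$-module. Writing $p = u\pi^{r}$ for some $u \in R^\times$ and integer $r \geq 1$, we get $p^n R = \pi^{nr}R$, so $\{p^n R\}_{n\geq 0}$ is cofinal in the standard neighborhood basis $\{\pi^m R\}$ of $0$ in $R$.

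For the $p^\infty$-torsion assertion, I would invoke the well-known no-small-subgroups property of $\RR/\ZZ$: the open neighborhood $U := (-1/3,1/3) + \ZZ$ of $0$ contains no subgroup of $\RR/\ZZ$ other than $\{0\}$, because every nontrivial subgroup of $\RR/\ZZ$ contains an element at distance at least $1/3$ from the nearest integer. Given $\varphi \in \TT_R$, continuity yields some $n$ with $\varphi(p^n R) \subseteq U$; since $\varphi(p^n R)$ is itself a subgroup of $\RR/\ZZ$, the no-small-subgroups property forces $\varphi(p^n R) = \{0\}$, i.e.\ $p^n R \subseteq \ker\varphi$, and hence $p^n \varphi = 0$.

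For the cardinality identity, the preceding step identifies $\TT_R[p^n]$ with the group of all $\ZZ$-module homomorphisms $R/p^n R \to \RR/\ZZ$ (continuity becomes automatic once one passes to the finite, hence discrete, quotient $R/p^n R$). Using $R \cong \ZZ_p^d$ we obtain $R/p^n R \cong (\ZZ/p^n\ZZ)^d$, and such homomorphisms necessarily land in $(1/p^n)\ZZ/\ZZ \cong \ZZ/p^n\ZZ$, giving $\#\TT_R[p^n] = p^{nd}$. Dually, $T[p^n] = p^{-n}R/R$ is isomorphic to $R/p^n R$ via multiplication by $p^n$, hence also has cardinality $p^{nd}$, and the two match.

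The central difficulty is the no-small-subgroups argument: this is where the archimedean nature of $\RR/\ZZ$ is put to use, and it is what upgrades pointwise continuity into uniform annihilation by a single power of $p$. Without it, an element-by-element analysis only shows that each $\varphi(r)$ lies in $\QQ_p/\ZZ_p \subseteq \RR/\ZZ$, which is not enough, because $\QQ_p/\ZZ_p$ does not embed discretely into $\RR/\ZZ$.
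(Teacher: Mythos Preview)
Your argument is correct and complete. The route, however, differs from the paper's in a meaningful way. The paper first establishes a \emph{structural} isomorphism $\TT_R \cong (\QQ_p/\ZZ_p)^e$ of $\ZZ_p$-modules: it invokes Theorem~\ref{Topology Forcing} to identify the topology on $R$ with the product topology on $\ZZ_p^e$, applies the biproduct property (Lemma~\ref{biproduct}) to split $\TT_R$ as $[\ZZ_p,\RR/\ZZ]_\ZZ^e$, and then computes $[\ZZ_p,\RR/\ZZ]_\ZZ \cong \QQ_p/\ZZ_p$ via evaluation at $1$ together with the observation that $p^n\varphi(1)\to 0$ forces $\varphi(1)$ to have $p$-power denominator. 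The $p^\infty$-torsion claim and the torsion count then fall out of this isomorphism. You instead bypass the structural identification entirely: the no-small-subgroups property of $\RR/\ZZ$ gives $p^\infty$-torsion directly, and the count follows from the finite-quotient identification $\TT_R[p^n]\cong \Hom_\ZZ(R/p^nR,\RR/\ZZ)$. Your approach is more self-contained (it avoids Theorem~\ref{Topology Forcing} and the biproduct machinery) and isolates exactly where the archimedean nature of $\RR/\ZZ$ enters; the paper's approach yields the extra dividend of knowing $\TT_R$ globally as a $\ZZ_p$-module, which is not needed for the lemma itself but is conceptually pleasant.
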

\begin{proof} There is a natural $\ZZ_p$-module structure on $R$. Since $R$ is an integral domain whose fraction field has characteristic $0$ this module structure is torsion free, and since every torsion free module over a DVR is free we obtain a (non-canonical) $\ZZ_p$-module isomorphism $R\cong \ZZ_p^{e}$ where $e$ is the rank of $R$ as a $\ZZ_p$-module.  $R$ is locally compact, and hence the topology forcing theorem implies that the canonical topology on $R$ is equal to the product topology on $e$ copies of $\ZZ_p$. This computation can also be done explicitly by choosing a basis and looking at valuations, but we omit it for brevity. Finite topological products are both the product and coproduct in the category of topological modules (By Lemma \ref{biproduct}) and hence we have a $\ZZ_p$-module isomorphism

$$\TT_R\cong \left[\ZZ_p,\RR/\ZZ\right]_\ZZ^e.$$

Since $\ZZ_p$ is topologically generated by $1$, the map

\begin{align*}
\left[\ZZ_p,\RR/\ZZ\right]_\ZZ&\xrightarrow{} \RR/\ZZ\\
\varphi&\mapsto \varphi(1)
\end{align*}

is an injection. Any element $\varphi(1)$ must satisfy $\lim_{n\to\infty}p^n\varphi(1)=0$. This is true if and only if $\varphi(1)$ is a rational number with a $p$-power denominator. Rationals in such form are naturally in bijection with elements of $\QQ_p/\ZZ_p$, and the module structure from the hom agrees, and hence we get a canonical $\ZZ_p$-module isomorphism

$$\left[\ZZ_p,\RR/\ZZ\right]_\ZZ\xrightarrow{\sim}\QQ_p/\ZZ_p.$$

Combining, we see that as a $\ZZ_p$-module $\TT_R\cong \left(\QQ_p/\ZZ_p\right)^r$. It is clear from this that $\TT_R$ is $p^{\infty}$-torsion. Moreover, counting $p^n$ torsion we get that

\begin{align*}
\# \TT_R[p^n]&=\#\left(\QQ_p/\ZZ_p\right)^r[p^n]\\
&=\left(\#\left(\QQ_p/\ZZ_p\right)[p^n]\right)^r\\
&=\left(\# \ZZ/(p^n)\right)^r\\
&=p^{rn}.
\end{align*}

On the other hand, we have that

$$ \# T[p^n]=\# R/(p^n)=\left(\# R/(p)\right)^n.$$

Thus, all that is left to do for the proof is to show that $\# R/(p) = p^r$. Writing elements of $R$ uniquely in a $\ZZ_p$-basis and reducing mod $p$ this is clear, so we are done.
\end{proof}

\begin{proposition}\label{mixed char} If $K$ has characteristic not equal to the characteristic of the residue field of $R$, then there exists an $\HM_{R}$-isomorphism

$$\TT_R\xrightarrow{\sim} T$$

\end{proposition}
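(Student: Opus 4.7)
The plan is to prove $\TT_R \cong T$ in two stages: first as abstract $R$-modules by classifying each as a divisible $\pi$-primary torsion $R$-module, then as topological modules by noting both sides carry the discrete topology. To begin, I would show that $\TT_R$ is divisible. Applying $\Hom_{\ZZ}^{\mathrm{cont}}(-,\RR/\ZZ)$ to the short exact sequence of compact abelian groups $0 \to R \xrightarrow{\pi} R \to R/\mm \to 0$ yields
\[
0 \to \Hom_{\ZZ}(R/\mm,\RR/\ZZ) \to \TT_R \xrightarrow{\pi^{*}} \TT_R \to 0,
\]
in which $\pi^{*}$ is precisely multiplication by $\pi$ in the $R$-module structure on $\TT_R$. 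Right exactness uses the injectivity of $\RR/\ZZ$ as a $\ZZ$-module together with the fact that $\pi R \subseteq R$ is a clopen subgroup, so any continuous $\ZZ$-homomorphism on $\pi R$ extends continuously to $R$ by choosing values on coset representatives for the finite quotient $R/\pi R$. Hence $\TT_R$ is divisible. By Lemma \ref{torsion cardinalities} it is also $p^{\infty}$-torsion, and since in mixed characteristic $p = u\pi^{e}$ for some unit $u \in R$ and ramification index $e \geq 1$, being $p^{\infty}$-torsion is equivalent to being $\pi^{\infty}$-torsion; of course $T = K/R$ has the same two properties.

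Next I would invoke the classification of divisible $\pi$-primary torsion modules over the DVR $R$: every such module $M$ decomposes as $M \cong T^{(I)}$ where $|I| = \dim_{R/\mm} M[\pi]$. For $T$ itself, $T[\pi] \cong R/\mm$ is one-dimensional over $R/\mm$. The exact sequence above identifies $\TT_R[\pi] \cong \Hom_{\ZZ}(R/\mm,\RR/\ZZ)$, which has cardinality $|R/\mm| = q$; equipped with the $R/\mm$-action inherited from the $R$-action on $\TT_R$ it becomes a nonzero $R/\mm$-vector space of order $q$, hence also one-dimensional. Consequently $\TT_R \cong T$ as abstract $R$-modules.

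Finally, both sides carry the discrete topology: $T$ by Lemma \ref{T discrete topology}, and $\TT_R$ by the standard argument that the compact-open topology on the continuous $\ZZ$-dual of a compact abelian group is discrete (equivalently, $\TT_R$ is locally compact and Theorem \ref{Topology Forcing} selects a unique such topology on each algebraic module). Any $R$-linear bijection between discrete topological $R$-modules is automatically an $\HM_R$-isomorphism, so any algebraic isomorphism produced above upgrades to the desired one. The one genuinely subtle step is the continuous-extension argument used to establish divisibility of $\TT_R$; the classification and cardinality count are routine, and the topological upgrade is immediate from discreteness on both sides.
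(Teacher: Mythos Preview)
Your proof is correct and takes a genuinely different route from the paper's. The paper first produces a nonzero $R$-linear map $T \to \TT_R$ by invoking the adjointness isomorphism $[T,\TT_R]_R \cong [T,\RR/\ZZ]_\ZZ$ of Proposition~\ref{Flood thing}, factors through the kernel, uses that every proper quotient of $T$ is again isomorphic to $T$ to upgrade to an injection $T \hookrightarrow \TT_R$, and then forces surjectivity via the $p^n$-torsion cardinality equalities of Lemma~\ref{torsion cardinalities}. You never construct a specific map: instead you show $\TT_R$ is divisible and $\pi$-primary, then appeal to the structure theorem for divisible torsion modules over a DVR, reducing everything to the one-dimensionality of the socle $\TT_R[\pi]$. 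Your approach is cleaner conceptually and bypasses the adjointness machinery entirely, at the cost of importing a classification theorem lying outside the paper's internal development; the paper's argument is more self-contained relative to what has already been set up, makes fuller use of the exact torsion counts in Lemma~\ref{torsion cardinalities}, and has the minor virtue of exhibiting an actual morphism rather than merely asserting one exists. Both arguments conclude identically on the topological side via discreteness.
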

\begin{proof} By Proposition \ref{Flood thing} we have that

$$\left[T, \TT_R\right]_R=\left[T,\RR/\ZZ\right]_\ZZ\neq0,$$

and thus there exists a nonzero $\HM_R$-morphism $f:T\xrightarrow{} \TT_R$. Moreover, by factoring we get an injective map

$$f':T/\ker(f)\hookrightarrow{}\TT_R.$$

The submodules of $T$ are all of the form $\ker_{n}=\left\{x\,\,\mathrm{s.t}\,\, \pi^n x=0\right\}$. Multiplication by $\pi^n$ gives an isomorphism

$$T\xrightarrow{\sim} T/\ker_n,$$

and hence we can consider $f'$ as an $R$-module injection $T\hookrightarrow{} \TT_R$. To start proving surjectivity, we note that the $p^n$-torsion of $T$ must map into the $p^n$-torsion of $\TT_R$. Since both sides have the same number of elements in their $p^n$ torsions by Lemma \ref{torsion cardinalities} this means that the map is surjective onto $p^n$ torsion for each $n$. Since $\TT_R$ is a $p^\infty$-torsion module this implies surjectivity.

By Theorem \ref{Pontryagin Duality Theorem}, the Pontryagin dual of any locally compact space is locally compact. Hence, by the above discussion, $\TT_{R}$ and $T$ are algebraiclly equal. Since $T$ is discrete by Lemma \ref{T discrete topology} and there is at most one locally compact topology on each algebraic module by Theorem \ref{Topology Forcing}, we get that $\TT_R$ is discrete as well so the map is a homeomorphism. 
\end{proof}

Collecting, we can prove Theorem \ref{Flood isomorphism theorem}:

\begin{proof}[Proof of Theorem \ref{Flood isomorphism theorem}] If $R$ is a compact DVR, then either the characteristic $K$ is equal to the characteristic of the residue field or it is different. It it is equal, then by Chapter 2 §4 Theorem 2  of \cite{serre2013local} we have that $R\cong \FFqx$ for some finite field $\FF_q$, and hence this case is treated by Proposition \ref{equal char}. If the characteristic is different we can give a treatment with Proposition \ref{mixed char} so we are done.
\end{proof}

We now prove Theorem \ref{Local Duality Theorem}:

\begin{proof}[Proof of Theorem \ref{Local Duality Theorem}] As a special case of Proposition \ref{Flood thing} setting $G=\RR/\ZZ$, we get a canonical $\HM_R$-isomorphism

$$[M,\RR/\ZZ]_{\ZZ}\xrightarrow{\sim} \left[M,\TT_R\right]_{R},$$

which yields a larger canonical $\HM_R$-isomorphism

$$\left[\left[M,\RR/\ZZ\right]_{\ZZ},\RR/\ZZ\right]_{\ZZ}\xrightarrow{\sim}\left[\left[M,\TT_R\right]_{R},\TT_R\right]_{R}.$$

Next, we choose any $\HM_R$-isomorphism $i:T\xrightarrow{\sim}\TT_R$ using \ref{Flood isomorphism theorem}. This yields a map

\begin{align*}
\left[\left[M,\TT_R\right]_{R},\TT_R\right]_{R}&\xrightarrow{} \left[\left[M,T\right]_{R},T\right]_{R}.\\
\Phi &\mapsto \left(\varphi\mapsto i^{-1}\left(\Phi\left(i\circ \varphi\right)\right)\right)
\end{align*}

It is straightforward to check that this map is an $\HM_R$-isomorphism, with the inverse given by the same map in the opposite direction and $i$ replaced with $i^{-1}$. Collecting these maps and including double-dual arrows we get a diagram

\[
\begin{tikzcd}
& \left[\left[M,T\right]_R,T\right]_R\\
M \arrow{r} \arrow{ur} \arrow{dr}& \left[\left[M,\TT_R\right],\TT_R\right] \arrow{u}\\
& \left[\left[M,\RR/\ZZ\right]_\ZZ,\RR/\ZZ\right]_\ZZ .\arrow{u}
\end{tikzcd}
\]

Unpacking the definitions of the maps, it is immediate that this diagram commutes. We note that the choice of isomorphism $i$ is arbitrary, since if $\Phi(\varphi)=\varphi(m)$ then

$$ i^{-1}\left(\Phi\left(i\circ \varphi\right)\right)=i^{-1}\left(\left(i\circ \varphi\right)(m)\right)=\varphi(m)$$

as well. By Pontryagin duality (Theorem \ref{Pontryagin Duality Theorem}), the map into $\left[\left[M,\RR/\ZZ\right]_\ZZ,\RR/\ZZ\right]_\ZZ$ is a $\ZZ$-module isomorphism. It is clearly also an $R$-module morphism, and hence it is a bijective $R$-module morphism with continuous inverse and hence it is an $\HM_R$-isomorphism.

Thus, moving up the diagram and using commutativity we get that the double dual map $M\xrightarrow{} [[M,T]_R,T]_R=M^{\wedge\wedge}$ is an $\HM_R$-isomorphism so we are done.
\end{proof}

\section{Archimedean Results}\label{Completing the Proof}

In this section we prove results relating to the rings $A=\ZZ[\delta]$, where throughout $\delta$ is fixed to be the quantity $\left(b+\sqrt{b^2+4a}\right)/2$, with $a$ being a negative integer and $b$ being an integer satisfying $\left|b\right|<2\sqrt{-a}$. Firstly, the classification part of Theorem \ref{Global Flood isomorphism} is proved:

\begin{lemma}\label{Archimedean lemma 1} Let $R$ be a non-field domain, and $|\cdot|$ an archimedean absolute value on $R$. Then $R$ is complete and locally compact with respect to its topology if and only if it is isomorphic to either $\ZZ$ or $\ZZ\left[\delta\right]$ where $\delta$ is as above.
\end{lemma}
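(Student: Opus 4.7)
The plan is to classify $R$ by embedding it into $\mathbb{R}$ or $\mathbb{C}$ via Ostrowski and then analyzing closed subrings there.

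First, I extend $|\cdot|$ to the fraction field $F=\mathrm{Frac}(R)$ and pass to the completion $\widehat{F}$. By Ostrowski's classification of archimedean absolute values on fields, $\widehat{F}$ is isomorphic as a valued field to either $\mathbb{R}$ or $\mathbb{C}$ with (a power of) the usual absolute value. Since $R$ is already complete and locally compact, $R$ is a closed, locally compact subring of $\widehat{F}\in\{\mathbb{R},\mathbb{C}\}$ containing $1$.

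Next I rule out all non-discrete candidates. If $R$ is a closed subring of $\mathbb{R}$ that is not discrete, then the closed additive subgroup $R\subseteq\mathbb{R}$ must equal $\mathbb{R}$, so $R$ is a field, contradicting the hypothesis that $R$ is a non-field domain. For the $\mathbb{C}$ case, suppose $R\subseteq\mathbb{C}$ is closed and contains an $\mathbb{R}$-line $L=\mathbb{R}v$. For any $r\in R$ the multiplication map $L\to\mathbb{C}$, $x\mapsto rx$, is continuous and $\mathbb{Z}$-linear, hence $\mathbb{R}$-linear, so $rL\subseteq R$ is again an $\mathbb{R}$-line through $0$ (or $0$). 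Taking $r=1$ shows $L\subseteq R$; running over $r\in R$ and taking closures one sees that $R$ is an $\mathbb{R}$-subspace of $\mathbb{C}$, hence $\mathbb{R}$ or $\mathbb{C}$, both fields — contradiction. Thus $R$ must be \emph{discrete} in $\widehat{F}$.

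Now I classify discrete unital subrings of $\mathbb{C}$. As a discrete closed additive subgroup of $\mathbb{R}^2$, such an $R$ is a free abelian group of rank $0$, $1$, or $2$. Rank $0$ is ruled out by $1\in R$. If the rank is $1$, then $R=c\mathbb{Z}$ for some $c\in\mathbb{C}^{\times}$; the conditions $1\in R$ and $c\cdot c\in R$ force $c=\pm1$, giving $R\cong\mathbb{Z}$. If the rank is $2$, pick a $\mathbb{Z}$-basis and adjust it so that $1$ is one basis element; then $R=\mathbb{Z}+\mathbb{Z}\delta$ for some $\delta\in\mathbb{C}\setminus\mathbb{R}$. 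Multiplicative closure demands $\delta^{2}\in\mathbb{Z}+\mathbb{Z}\delta$, i.e., $\delta^{2}=b\delta+a$ for integers $a,b$, so
\[
\delta=\frac{b\pm\sqrt{b^{2}+4a}}{2}.
\]
The requirement $\delta\notin\mathbb{R}$ is precisely $b^{2}+4a<0$, equivalently $a<0$ and $|b|<2\sqrt{-a}$; replacing $\delta$ by its conjugate if necessary fixes the sign of the square root, yielding $R\cong\mathbb{Z}[\delta]$ in the stated form.

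For the converse, $\mathbb{Z}$ and $\mathbb{Z}[\delta]$ are discrete subsets of $\mathbb{C}$ (since $\{1,\delta\}$ is $\mathbb{R}$-linearly independent), and every discrete space is locally compact and complete. Both are non-field domains: $\mathbb{Z}$ trivially, and $\mathbb{Z}[\delta]\cong\mathbb{Z}[x]/(x^{2}-bx-a)$ is an integral domain (the polynomial is irreducible over $\mathbb{Q}$ because its discriminant $b^{2}+4a$ is negative) in which $2$ is non-invertible. This completes the if and only if.

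The main obstacle is the argument ruling out non-discrete closed proper subrings of $\mathbb{C}$; everything else is either a direct appeal to Ostrowski or the standard classification of closed subgroups of $\mathbb{R}^{n}$. The key observation making that step work cleanly is the automatic $\mathbb{R}$-linearity of continuous $\mathbb{Z}$-linear maps out of $\mathbb{R}$, which upgrades mere additive structure to real-vector-space structure as soon as any $\mathbb{R}$-line is absorbed.
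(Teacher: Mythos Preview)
Your proof is correct and follows the same skeleton as the paper's: pass to the completion of the fraction field, invoke Ostrowski/Gelfand--Mazur to land inside $\CC$, argue that $R$ must be discrete there, and then classify discrete unital subrings of $\CC$. The differences are in how the last two steps are executed. For discreteness, the paper argues informally that arbitrarily small elements would force $\RR\subseteq R$ ``using the total order of the reals''; you instead use the structure theorem for closed subgroups of $\RR^2$ to produce an $\RR$-line in $R$ and then show $R$ must be an $\RR$-subspace. For the final classification, the paper simply cites Shell's classification of discrete subrings of $\CC$, whereas you carry it out by hand via the lattice structure. Your route is more self-contained and arguably cleaner in the complex case; the paper's route is shorter by outsourcing the classification.

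Two places in your write-up deserve one extra sentence each. First, the step ``running over $r\in R$ and taking closures one sees that $R$ is an $\RR$-subspace'' is a bit compressed: the clean way is to note that $\RR v\subseteq R$ forces $\RR v^2=(\RR v)(\RR v)\subseteq R$, and then split on whether $v\in\RR$ (giving $\RR\subseteq R$, so $R$ is an $\RR$-subalgebra of $\CC$) or $v\notin\RR$ (so $v,v^2$ are $\RR$-independent and $\RR v+\RR v^2=\CC$). Second, ``adjust the basis so that $1$ is one basis element'' uses that $1$ is primitive in the lattice $R$; this follows because $R\cap\RR$ is a discrete unital subring of $\RR$, hence equals $\ZZ$, so $1$ is not a proper integer multiple of any element of $R$. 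With those two remarks made explicit, the argument is airtight.
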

\begin{proof} The closure $\overline{K}$ of the fraction field of $R$ is a complete field with respect to an archimedean absolute value. Since every absolute value on a field of characteristic $p$ is non-archimedean, we may assume the characteristic of $\overline{K}$ is $0$. In particular, we have $\QQ \subseteq \overline{K}$. By Ostrowski's classification of absolute values on $\QQ$, we must have that $|\cdot|$ restricts to the usual absolute value and hence we have $\RR \subseteq \overline{K}$. Using this we conclude that $\overline{K}$ can be viewed as a real Banach algebra and hence by the Gelfand-Mazur theorem $\overline{K}$ is isomorphic to either $\RR$ or $\CC$.

We now have that $R$ is a subring of $\CC$. If the induced topology were not discrete, then one would have arbitrarily small elements in $R$. Using the total order of the reals this allows one to approximate every real number arbitrarily well, and hence by completeness we would have $\RR \subseteq R$, and hence $R=\RR$ or $R=\CC$ which is not possible since $R$ is assumed not a field. Hence, we have that $R$ is a discrete subring of $\CC$. Moreover, every discrete subring is complete and hence we have that $R$ satisfies the conditions of the theorem if and only if it is a discrete subring of $\CC$.

Theorem 12 of \cite{shell1996discrete} gives a classification of discrete subrings of $\CC$. Taking into account discrepancies in the definition of ring (i.e., we assume all rings are unital), this classification consists of exactly the rings listed in the statement of our lemma.
\end{proof}

Next, we prove the $\HM_A$-isomorphism part of Theorem \ref{Global Flood isomorphism}:

\begin{lemma}\label{New flood isomorphism yum} There is a canonical $\HM_A$-isomorphism

\begin{align*}
\TT_A &\xrightarrow{\sim} \CC/A\\
\varphi&\mapsto \varphi(1)+\varphi(\delta)\delta
\end{align*}
\end{lemma}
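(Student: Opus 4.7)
The plan is to split the proof into four parts: well-definedness, bijectivity, $A$-linearity, and the topological statement. I would begin by setting up the relevant decompositions of $\TT_A$ and $\CC/A$ as abelian groups, which makes the first, second, and fourth parts essentially formal, and isolate the $A$-linearity as the one nontrivial computation.

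For the setup, I would use that $A=\ZZ[\delta]$ is a free $\ZZ$-module of rank $2$ on the basis $\{1,\delta\}$, since $\delta$ satisfies the irreducible quadratic $x^2 - bx - a$ (here $|b|<2\sqrt{-a}$ forces $b^2+4a<0$, so $\delta\notin\RR$). By Lemma \ref{Archimedean lemma 1}, $A$ is discrete, so continuity of maps out of $A$ is automatic and $\TT_A = \Hom_\ZZ(A,\RR/\ZZ)$. The evaluation map $\TT_A \to (\RR/\ZZ)^2$, $\varphi \mapsto (\varphi(1),\varphi(\delta))$, is therefore an isomorphism of topological $\ZZ$-modules. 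On the target side, because $\delta\notin\RR$ the pair $\{1,\delta\}$ is an $\RR$-basis of $\CC$, so $\CC=\RR\oplus\RR\delta$ and $\CC/A \cong (\RR/\ZZ)\oplus(\RR/\ZZ)\delta$ as topological groups.

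Under these two identifications the map in the statement becomes the identity on $(\RR/\ZZ)^2$: choosing lifts $\tilde p,\tilde q\in\RR$ of $\varphi(1),\varphi(\delta)$, the element $\tilde p+\tilde q\delta\in\CC$ is independent of the choice of lifts modulo the lattice $\ZZ+\ZZ\delta=A$. This single observation simultaneously yields well-definedness, $\ZZ$-linearity, bijectivity, and the fact that the map is a homeomorphism.

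The substantive step is $A$-linearity. Since both sides of the identity $\Phi(\alpha\cdot\varphi)=\alpha\cdot\Phi(\varphi)$ are $\ZZ$-bilinear in $(\alpha,\varphi)$ and $A=\ZZ\cdot1\oplus\ZZ\cdot\delta$ is generated over $\ZZ$ by $\delta$, it suffices to check the identity for $\alpha=\delta$. Using the relation $\delta^2=b\delta+a$ together with $(\delta\cdot\varphi)(x)=\varphi(\delta x)$, I would compute
\begin{align*}
\Phi(\delta\cdot\varphi) &= \varphi(\delta) + \varphi(\delta^2)\,\delta = \varphi(\delta) + \bigl(b\varphi(\delta)+a\varphi(1)\bigr)\delta,\\
\delta\cdot\Phi(\varphi) &= \varphi(1)\,\delta + \varphi(\delta)\,\delta^2 = a\varphi(\delta) + \bigl(\varphi(1)+b\varphi(\delta)\bigr)\delta,
\end{align*}
and verify that the difference lies in $A\subseteq\CC$, using the identities $\delta+\bar\delta=b\in\ZZ$ and $\delta\bar\delta=-a\in\ZZ$ coming from the quadratic relation. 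The main obstacle lies here: one has to match the action on $\TT_A$ by pre-multiplication against multiplication by $\delta$ on $\CC/A$, and the bookkeeping is where the precise numerical relations between $a$, $b$, and $\delta$ enter. Once this computation is in place, the earlier structural arguments supply everything else, so the lemma follows.
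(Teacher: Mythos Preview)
Your structural setup---identifying both $\TT_A$ and $\CC/A$ with $(\RR/\ZZ)^2$ via the basis $\{1,\delta\}$---matches the paper's approach and correctly handles well-definedness, bijectivity, and the homeomorphism. You also go further than the paper by explicitly isolating the $A$-linearity check, which the paper's proof omits entirely.

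However, this is exactly where your plan breaks. Subtracting your two displayed expressions gives
\[
\Phi(\delta\varphi)-\delta\Phi(\varphi)=(1-a)\bigl(q-p\delta\bigr),
\]
with $p,q\in\RR$ arbitrary lifts of $\varphi(1),\varphi(\delta)$. Since $a\le -1$ we have $1-a\ge 2$, and $(1-a)q,\,(1-a)p$ are not integers for generic $p,q$, so the difference does \emph{not} lie in $A$. The identities $\delta+\bar\delta=b$ and $\delta\bar\delta=-a$ you plan to invoke cannot rescue this: the obstruction lives in the real coefficients, not in $\delta$. Concretely, for $A=\ZZ[i]$ (so $a=-1$, $b=0$) one finds $\Phi(i\varphi)=\varphi(i)-\varphi(1)i=-i\,\Phi(\varphi)$, i.e.\ the map is conjugate-linear rather than $A$-linear. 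In these coordinates the matrix of $\delta$ on $\TT_A$ is $\left(\begin{smallmatrix}0&1\\a&b\end{smallmatrix}\right)$ while on $\CC/A$ it is the transpose $\left(\begin{smallmatrix}0&a\\1&b\end{smallmatrix}\right)$, and the identity does not intertwine these. Thus the specific formula in the lemma is not $A$-linear; an $A$-linear isomorphism $\TT_A\cong\CC/A$ does exist---for instance $\varphi\mapsto(\varphi(\delta)-b\varphi(1))+\varphi(1)\delta$, for which the analogous computation yields an exact equality---but the paper, having skipped the linearity verification, does not notice that its stated map is the wrong one.
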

\begin{proof} The quantity $\varphi(1)+\varphi(\delta)\delta$ is well defined up to changing $\varphi(1)$ and $\varphi(\delta)$ by integer shifts, i.e., up to changing the full quantity by an element of $\ZZ[\delta]$, and hence the map's image is a well defined element of $\CC/A$. Moreover, every morphism in $\TT_A$ is determined by its action on $1$ and $\delta$ hence the above discussion implies that the map is injective.

Since $A$ is discrete, any map $A\xrightarrow{}\RR/\ZZ$ is continuous and thus the image consists of all elements in $\CC/A$. It remains to check that the map is a homeomorphism. A convergent sequence of functions in $[A,\RR/\ZZ]_{\ZZ}$ in the compact-open topology is pointwise convergent, and hence $\varphi(1)$ and $\varphi(\delta)$ converge. Moreover, $\varphi(1)$ and $\varphi(\delta)$ converging is sufficient for the a sequence of functions to be convergent since $\ZZ[\delta]$ is generated by $\varphi(1)$ and $\varphi(\delta)$ uniformly on compact sets. Hence, the topology on $[A,\RR/\ZZ]_{\ZZ}$ is the product topology on its two copies of $\RR/\ZZ$, and seeing as the topology on $\CC$ is the product topology on its two copies of $\RR$ we are done.
\end{proof}

Now, we state the final part needed for Theorem \ref{Global Flood isomorphism}:

\begin{lemma}\label{omg last lemma} The double dual map $M\xrightarrow{} \left[\left[M,\CC/A\right]_A,\CC/A\right]_A$ is an isomorphism for all locally compact $A$-modules $M$.
\end{lemma}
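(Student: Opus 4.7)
The plan is to replay the argument from the proof of Theorem \ref{Local Duality Theorem} essentially verbatim, with $R$ replaced by $A$ and $T$ replaced by $\CC/A$. The situation is in fact slightly cleaner here because Lemma \ref{New flood isomorphism yum} provides a canonical isomorphism $\TT_A \cong \CC/A$, removing the need to fix any arbitrary choice.

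First, since $A$ is discrete as a subring of $\CC$, it is in particular a locally compact topological ring, so Proposition \ref{Flood thing} applies. Taking $G = \RR/\ZZ$ yields a canonical $\HM_A$-isomorphism
$$[M, \RR/\ZZ]_\ZZ \xrightarrow{\sim} [M, \TT_A]_A,$$
and iterating gives an $\HM_A$-isomorphism
$$\bigl[[M, \RR/\ZZ]_\ZZ, \RR/\ZZ\bigr]_\ZZ \xrightarrow{\sim} \bigl[[M, \TT_A]_A, \TT_A\bigr]_A.$$
Next, the canonical isomorphism $i : \TT_A \xrightarrow{\sim} \CC/A$ of Lemma \ref{New flood isomorphism yum} induces, by the transport $\Phi \mapsto (\varphi \mapsto i(\Phi(i^{-1} \circ \varphi)))$, a further $\HM_A$-isomorphism
$$\bigl[[M, \TT_A]_A, \TT_A\bigr]_A \xrightarrow{\sim} \bigl[[M, \CC/A]_A, \CC/A\bigr]_A.$$

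With these pieces in hand, I would assemble the diagram
$$
\begin{tikzcd}
& [[M, \CC/A]_A, \CC/A]_A \\
M \arrow[ur] \arrow[r] \arrow[dr] & [[M, \TT_A]_A, \TT_A]_A \arrow[u] \\
& [[M, \RR/\ZZ]_\ZZ, \RR/\ZZ]_\ZZ \arrow[u]
\end{tikzcd}
$$
where the three arrows from $M$ are the respective evaluation-at-$m$ (double dual) maps and the vertical arrows are the $\HM_A$-isomorphisms just constructed. Unpacking the adjointness of Proposition \ref{Flood thing} (whose unit is evaluation at $1 \in A$) together with the definition of transport by $i$, every path sends $m$ to "evaluation at $m$" in the appropriate codomain, so the diagram commutes. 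Now $M$ is locally compact as an $A$-module, hence locally compact as a topological abelian group, so by Pontryagin duality (Theorem \ref{Pontryagin Duality Theorem}) the bottom arrow is a $\ZZ$-module isomorphism and homeomorphism. Since the side arrows are $\HM_A$-isomorphisms and the bottom arrow is a homeomorphism compatible with the $A$-action (the $A$-action factors through the $\ZZ$-action via multiplication on $M$), the top arrow is a bijective continuous $A$-module morphism whose inverse is also continuous, hence an $\HM_A$-isomorphism, as required.

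I expect essentially no serious obstacle: the only real verification is that the diagram commutes, which is identical bookkeeping to that performed in the proof of Theorem \ref{Local Duality Theorem}, simplified by the canonicity of $i$. The one small point to confirm carefully is that the Pontryagin double dual isomorphism, a priori only a $\ZZ$-module isomorphism and homeomorphism, is automatically $A$-equivariant; this is immediate because both its source and target carry their $A$-structures by postmultiplication on $M$, and the map is evaluation, which commutes with postmultiplication.
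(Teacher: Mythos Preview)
Your proposal is correct and follows exactly the approach the paper takes: the paper's proof simply states that the argument is identical to the proof of Theorem \ref{Local Duality Theorem}, with Lemma \ref{New flood isomorphism yum} replacing Theorem \ref{Flood isomorphism theorem}. You have in fact supplied more detail than the paper does, including the observation about $A$-equivariance of the Pontryagin double dual map, which the paper handles in the same way in its proof of Theorem \ref{Local Duality Theorem}.
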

\begin{proof} This proof is identical to the alternate proof of Theorem \ref{Local Duality Theorem} given in section \ref{Global to Local}. The only change is that instead of using Theorem \ref{Flood isomorphism theorem} one uses Lemma \ref{New flood isomorphism yum}.
\end{proof}

We now put it all together:

\begin{proof}[Proof of Theorem \ref{Global Flood isomorphism}] This statement is simply the compositum of Lemmas \ref{Archimedean lemma 1}, \ref{New flood isomorphism yum}, and \ref{omg last lemma}. It requires no further justification.
\end{proof}

\bibliographystyle{alpha}
\bibliography{ref}

\end{document}